\begin{document}

\theoremstyle{plain}

\newtheorem{thm}{Theorem}[section]
\newtheorem{lem}[thm]{Lemma}
\newtheorem{pro}[thm]{Proposition}
\newtheorem{hyp}[thm]{Hypotheses}
\newtheorem{cor}[thm]{Corollary}
\newtheorem*{conj}{Conjecture}
\newtheorem*{thm*}{Theorem}

\theoremstyle{definition}
\newtheorem{rem}[thm]{Remark}
\theoremstyle{definition}
\newtheorem{defi}[thm]{Definition}
\theoremstyle{definition}
\newtheorem{ex}[thm]{Example}
\theoremstyle{definition}
\newtheorem{prob}[thm]{Problem}
\theoremstyle{definition}
\newtheorem*{question}{Question}

\newcommand{\Maxn}{\operatorname{Max_{\textbf{N}}}}
\newcommand{\Syl}{\operatorname{Syl}}
\newcommand{\dl}{\operatorname{dl}}
\newcommand{\Con}{\operatorname{Con}}
\newcommand{\cl}{\operatorname{cl}}
\newcommand{\Stab}{\operatorname{Stab}}
\newcommand{\Aut}{\operatorname{Aut}}
\newcommand{\Ker}{\operatorname{Ker}}
\newcommand{\fl}{\operatorname{fl}}
\newcommand{\Irr}{\operatorname{Irr}}
\newcommand{\SL}{\operatorname{SL}}
\newcommand{\FF}{\mathbb{F}}
\newcommand{\NN}{\mathbb{N}}
\newcommand{\N}{\mathbf{N}}
\newcommand{\C}{\mathbf{C}}
\newcommand{\OO}{\mathbf{O}}
\newcommand{\bG}{\mathbf{G}}\newcommand{\bK}{\mathbf{K}}
\newcommand{\F}{\mathbf{F}}
\newcommand{\wt}{\widetilde}
\newcommand\wh[1]{\hstretch{2}{\hat{\hstretch{.5}{#1}}}}

\renewcommand{\labelenumi}{\upshape (\roman{enumi})}

\newcommand{\PSL}{\operatorname{PSL}}
\newcommand{\PSU}{\operatorname{PSU}}

\providecommand{\V}{\mathrm{V}}
\providecommand{\E}{\mathrm{E}}
\providecommand{\ir}{\mathrm{Irr_{rv}}}
\providecommand{\Irrr}{\mathrm{Irr_{rv}}}
\providecommand{\re}{\mathrm{Re}}

\def\irr#1{{\rm Irr}(#1)}
\def\irrv#1{{\rm Irr}_{\rm rv}(#1)}
\def \c#1{{\mathcal #1}}
\def\cent#1#2{{\bf C}_{#1}(#2)}
\def\syl#1#2{{\rm Syl}_#1(#2)}
\def\nor{\triangleleft\,}
\def\oh#1#2{{\bf O}_{#1}(#2)}
\def\Oh#1#2{{\bf O}^{#1}(#2)}
\def\zent#1{{\bf Z}(#1)}
\def\det#1{{\rm det}(#1)}
\def\ker#1{{\rm ker}(#1)}
\def\norm#1#2{{\bf N}_{#1}(#2)}
\def\alt#1{{\rm Alt}(#1)}
\def\iitem#1{\goodbreak\par\noindent{\bf #1}}
   \def \mod#1{\, {\rm mod} \, #1 \, }
\def\sbs{\subseteq}

\def\gc{{\bf GC}}
\def\ngc{{non-{\bf GC}}}
\def\ngcs{{non-{\bf GC}$^*$}}
\newcommand{\notd}{{\!\not{|}}}
\def\aut#1{{\rm Aut}(#1)}
\def\inn#1{{\rm Inn}(#1)}
\def\out#1{{\rm Out}(#1)}

\newcommand{\Mult}{{\mathrm {Mult}}}
\newcommand{\Inn}{{\mathrm {Inn}}}
\newcommand{\IBR}{{\mathrm {IBr}}}
\newcommand{\IBRL}{{\mathrm {IBr}}_{\ell}}
\newcommand{\IBRP}{{\mathrm {IBr}}_{p}}
\newcommand{\ord}{{\mathrm {ord}}}
\def\id{\mathop{\mathrm{ id}}\nolimits}
\renewcommand{\Im}{{\mathrm {Im}}}
\newcommand{\Ind}{{\mathrm {Ind}}}
\newcommand{\diag}{{\mathrm {diag}}}
\newcommand{\soc}{{\mathrm {soc}}}
\newcommand{\End}{{\mathrm {End}}}
\newcommand{\sol}{{\mathrm {sol}}}
\newcommand{\Hom}{{\mathrm {Hom}}}
\newcommand{\Mor}{{\mathrm {Mor}}}
\newcommand{\Mat}{{\mathrm {Mat}}}
\def\rank{\mathop{\mathrm{ rank}}\nolimits}
\newcommand{\Tr}{{\mathrm {Tr}}}
\newcommand{\tr}{{\mathrm {tr}}}
\newcommand{\Gal}{{\it Gal}}
\newcommand{\Spec}{{\mathrm {Spec}}}
\newcommand{\ad}{{\mathrm {ad}}}
\newcommand{\Sym}{{\mathrm {Sym}}}
\newcommand{\Char}{{\mathrm {char}}}
\newcommand{\pr}{{\mathrm {pr}}}
\newcommand{\rad}{{\mathrm {rad}}}
\newcommand{\abel}{{\mathrm {abel}}}
\newcommand{\codim}{{\mathrm {codim}}}
\newcommand{\ind}{{\mathrm {ind}}}
\newcommand{\Res}{{\mathrm {Res}}}
\newcommand{\Ann}{{\mathrm {Ann}}}
\newcommand{\Ext}{{\mathrm {Ext}}}
\newcommand{\Alt}{{\mathrm {Alt}}}
\newcommand{\AAA}{{\sf A}}
\newcommand{\SSS}{{\sf S}}
\newcommand{\h}{{\mathcal H}}
\newcommand{\CC}{{\mathbb C}}
\newcommand{\CB}{{\mathbf C}}
\newcommand{\RR}{{\mathbb R}}
\newcommand{\QQ}{{\mathbb Q}}
\newcommand{\ZZ}{{\mathbb Z}}
\newcommand{\NB}{{\mathbf N}}
\newcommand{\OB}{{\mathbf O}}
\newcommand{\ZB}{{\mathbf Z}}
\newcommand{\EE}{{\mathbb E}}
\newcommand{\PP}{{\mathbb P}}
\newcommand{\GC}{{\mathcal G}}
\newcommand{\HC}{{\mathcal H}}
\newcommand{\GA}{{\mathfrak G}}
\newcommand{\TC}{{\mathcal T}}
\newcommand{\SC}{{\mathcal S}}
\newcommand{\RC}{{\mathcal R}}
\newcommand{\GCD}{\GC^{*}}
\newcommand{\TCD}{\TC^{*}}
\newcommand{\FD}{F^{*}}
\newcommand{\GD}{G^{*}}
\newcommand{\HD}{H^{*}}
\newcommand{\GCF}{\GC^{F}}
\newcommand{\TCF}{\TC^{F}}
\newcommand{\PCF}{\PC^{F}}
\newcommand{\GCDF}{(\GC^{*})^{F^{*}}}
\newcommand{\RGTT}{R^{\GC}_{\TC}(\theta)}
\newcommand{\RGTA}{R^{\GC}_{\TC}(1)}
\newcommand{\Om}{\Omega}
\newcommand{\eps}{\epsilon}
\newcommand{\al}{\alpha}
\newcommand{\chis}{\chi_{s}}
\newcommand{\sigmad}{\sigma^{*}}
\newcommand{\PA}{\boldsymbol{\alpha}}
\newcommand{\gam}{\gamma}
\newcommand{\lam}{\lambda}
\newcommand{\la}{\langle}
\newcommand{\ra}{\rangle}
\newcommand{\hs}{\widehat{s}}
\newcommand{\htt}{\widehat{t}}
\newcommand{\tn}{\hspace{0.5mm}^{t}\hspace*{-0.2mm}}
\newcommand{\ta}{\hspace{0.5mm}^{2}\hspace*{-0.2mm}}
\newcommand{\tb}{\hspace{0.5mm}^{3}\hspace*{-0.2mm}}
\def\skipa{\vspace{-1.5mm} & \vspace{-1.5mm} & \vspace{-1.5mm}\\}
\newcommand{\tw}[1]{{}^#1\!}
\renewcommand{\mod}{\bmod \,}

\marginparsep-0.5cm

\renewcommand{\thefootnote}{\fnsymbol{footnote}}
\footnotesep6.5pt

\def\C{{\Bbb C}}
\def\irr#1{{\rm Irr}(#1)}
\def\ibr#1{{\rm IBr}(#1)}
\def\irrp#1#2{{\rm Irr}_{#1'}(#2)}
\def\irrz#1{{\rm Irr}_{0}(#1)}
\def\cent#1#2{{\bf C}_{#1}(#2)}
\def\syl#1#2{{\rm Syl}_#1(#2)}
\def\nor{\triangleleft\,}
\def\zent#1{{\bf Z}(#1)}
\def\norm#1#2{{\bf N}_{#1}(#2)}
\def\oh#1#2{{\bf O}_{#1}(#2)}
\def\iitem#1{\goodbreak\par\noindent{\bf #1}}

\newtheorem*{thmA}{Theorem A}
\newtheorem*{thmB}{Theorem B}
\newtheorem*{thmC}{Theorem C}
\newtheorem*{thmD}{Theorem D}
\newtheorem*{conA}{Conjecture A}
\newtheorem*{conB}{Conjecture B}

\newtheorem{thml}{Theorem}
\renewcommand*{\thethml}{\Alph{thml}}   
\newtheorem{conl}[thml]{Conjecture}

\title{Galois action on the principal block and cyclic Sylow subgroups}

\author{Noelia Rizo}
\address{Dipartimento di Matematica e Informatica U. Dini, Viale Morgagni 67/a, Firenze, Italy}
\email{noelia.rizocarrion@unifi.it}
\author{A. A. Schaeffer Fry}
\address{Dept. Mathematical and Computer Sciences, MSU Denver, Denver, CO 80217, USA}
\email{aschaef6@msudenver.edu}
\author{Carolina Vallejo}
\address{Departamento de Matem\'aticas,  Facultad de Ciencias,
Universidad Aut\'onoma de Madrid, Campus de Cantoblanco, 28049 Madrid, Spain}
\email{carolina.vallejo@uam.es}
\thanks{This material is based upon work supported by the National Security Agency under Grant No. H98230-19-1-0119, The Lyda Hill Foundation, The McGovern Foundation, and Microsoft Research, while the authors were in residence at the Mathematical Sciences Research Institute in Berkeley, California, during the summer of 2019. The first-named author acknowledges support by  Ministerio de Ciencia
e Innovaci\'on PID2019-103854GB-I00 and FEDER funds.
 The second-named author acknowledges support from the National Science Foundation under Grant No. DMS-1801156.
The third-named author acknowledges support by  Ministerio de Ciencia
e Innovaci\'on projects MTM2017-82690-P and PID2019-103854GB-I00, FEDER funds and the ICMAT Severo Ochoa project SEV-2011-0087.}

\keywords{Galois action on characters, principal $p$-block, cyclic Sylow $p$-subgroups, Alperin-McKay-Navarro conjecture}

\subjclass[2010]{Primary 20C15, 20C20; Secondary 20C33}

\begin{abstract}
 We characterize finite groups $G$ having a cyclic Sylow $p$-subgroup in terms of the action of a specific Galois automorphism on the principal $p$-block of $G$, for $p=2, 3$. We show that the analog statement for blocks with arbitrary defect group would follow from the blockwise McKay--Navarro conjecture.
 \end{abstract}

\maketitle

\section*{Introduction}
\noindent One of the most prevalent questions in the representation theory of finite groups is to determine what relationships hold between the
set $\irr G$ of irreducible complex characters of a finite group $G$ and its local structure, such as the structure of a Sylow $p$-subgroup $P$ of $G$. There 
is, of course, the more sophisticated question of relating the set $\irr B$ of irreducible characters belonging to a given Brauer $p$-block $B$ of $G$
with the structure of a defect  group $D$ of $B$. 

In \cite{NT19}, G. Navarro and P. H. Tiep conjecture that for a prime $p$, one can determine the exponent of the abelianization of $P$
in terms of the action of certain Galois automorphisms on $\irr G$. 
To be more precise, for a fixed prime $p$ and an integer $e\geq 1$, let $\sigma_e\in {\rm Gal}(\QQ^{ab}/\QQ)=\c G$ be such that $\sigma_e$ fixes $p'$-roots of unity and sends any root of unity of order a power of $p$ to its $(p^e+1)$st power.  In \cite{NT19} it is proven that the exponent of $P/P'$ is less than or equal to $p^e$ whenever all of the irreducible characters of $p'$-degree of $G$ are $\sigma_e$-fixed, and the converse is reduced to a question on finite simple groups. (Thanks to \cite{Mal19} we know that the converse holds for $p=2$.) 

In the present work, we show that one can determine whether $P$ is cyclic (for small primes) by just counting the number of certain $\sigma_1$-invariant elements of $\irr {B_0}$, where $B_0$ is the principal $p$-block of $G$. 
This is the main result of our paper.

\begin{thml}\label{thmA}
{\em Let $G$ be a finite group of order divisible by $p$, where $p\in \{ 2, 3\}$. Let $P\in \syl p G$ and let $B_0$ be the principal $p$-block of $G$. Then 
$$|\irrp p  {B_0}^{\sigma_1}|=p \text{\  if, and only if, } P \text {  is cyclic,}$$
where $\irrp p  {B_0}^{\sigma_1}$ is the set of irreducible characters in $B_0$ with degree relatively prime to $p$ that are fixed under the action 
of $\sigma_1$.}
\end{thml}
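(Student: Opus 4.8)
The plan is to pass from $G$ to the local subgroup $L=\norm G P$ and carry out the computation there. First I would invoke the blockwise McKay--Navarro conjecture for the principal block, which is available for $p\in\{2,3\}$: it provides a bijection $\irrp p{B_0(G)}\to\irrp p{B_0(L)}$ that commutes with the $\c G$-action, hence with $\sigma_1$, so that $|\irrp p{B_0(G)}^{\sigma_1}|=|\irrp p{B_0(L)}^{\sigma_1}|$; this is precisely where the restriction $p\in\{2,3\}$ is used. It then suffices to show that $|\irrp p{B_0(L)}^{\sigma_1}|=p$ if and only if $P$ is cyclic. Since $\oh{p'}L$ lies in the kernel of every character of $B_0(L)$, inflation gives a degree- and value-preserving (hence $\sigma_1$-equivariant) bijection onto $\irrp p{B_0(\bar L)}$ for $\bar L=L/\oh{p'}L$. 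I may therefore assume $\oh{p'}L=1$, so that $P\trianglelefteq L$ is a self-centralizing normal Sylow subgroup: $\cent L P=\zent P$ and $F^*(L)=P$.

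Suppose first that $P$ is cyclic. Then $L=P\rtimes E$ with $E$ cyclic of order $e\mid p-1$ acting faithfully, $P$ is abelian, so by Itô every irreducible character of $L$ has $p'$-degree, and $L$ has a single $p$-block, namely $B_0$. Thus $|\irrp p{B_0(L)}^{\sigma_1}|$ is the number of $\sigma_1$-fixed members of $\irr L$, which by Brauer's permutation lemma equals the number of $\sigma_1$-fixed conjugacy classes. Writing $\sigma_1$ as $g\mapsto g^c$ with $c\equiv 1\pmod{|L|_{p'}}$ and $c\equiv p+1\pmod{|P|}$, a class is fixed exactly when the $p$-part of a representative has order at most $p$, and a short Clifford-theoretic count over $P$ (equivalently, counting the non-exceptional and exceptional characters on the Brauer tree of $B_0(L)$) yields the value $e+\frac{p-1}{e}$. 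Since
\[
e+\frac{p-1}{e}-p=\frac1e\,(e-1)\bigl(e-(p-1)\bigr),
\]
this equals $p$ exactly when $e\in\{1,\,p-1\}$; and for $p\in\{2,3\}$ \emph{every} divisor of $p-1$ already lies in $\{1,p-1\}$. Hence the count is $p$ whenever $P$ is cyclic.

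For the converse I would argue contrapositively, showing that if $P$ is non-cyclic then $|\irrp p{B_0(L)}^{\sigma_1}|>p$. Non-cyclicity of $P$ is equivalent to $P/\Phi(P)$ having rank $r\ge2$. The mechanism is transparent in the case $E=1$, i.e.\ $L=P$: here $\irrp p{B_0(P)}$ consists of the linear characters of $P$, a character $\lambda$ is $\sigma_1$-fixed exactly when $\lambda^p=1$, and the number of such characters is $p^{\,r}\ge p^2>p$. In the presence of $E$ one must instead count the $\sigma_1$-fixed height-zero characters of $B_0(L)$ lying over the $E$-orbits of $P$-characters of order dividing $p$; the rank-$\ge2$ hypothesis is what should force this number to stay above $p$ for $p\in\{2,3\}$. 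Putting the two implications together proves the theorem.

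The hard part is this last step. For non-abelian $P$ the clean class-counting shortcut of the cyclic case is unavailable---not every irreducible character has $p'$-degree, and $B_0(L)$ need no longer be the whole group---so I must analyze the height-zero characters of the principal block directly and control the $\sigma_1$-action on the non-linear ones. I expect to handle this by a prime-by-prime analysis ($p=2$ versus $p=3$), exploiting that for these small primes the action of a $p$-complement of $L$ on $P/\Phi(P)$ is highly restricted, together with the lower bound furnished by the $\sigma_1$-fixed characters, to guarantee that the count never descends to exactly $p$.
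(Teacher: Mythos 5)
There is a fatal gap at the very first step: the ``blockwise McKay--Navarro conjecture for the principal block'' is \emph{not} an available theorem, for $p\in\{2,3\}$ or for any other prime. As the paper itself emphasizes, the blockwise (Alperin--McKay--Navarro) conjecture ``remains unreduced at the present moment''; even the non-blockwise Galois--McKay conjecture had at that point only been \emph{reduced} to simple groups, not proved. The only case in which the bijection $\irrp p{B_0(G)}^{\sigma_1}\leftrightarrow\irrp p{B_0(\norm G P)}^{\sigma_1}$ is actually known is when the defect group is cyclic (Navarro, \cite[Theorem 3.4]{Nav04}), which is exactly why the ``if'' direction of your argument is legitimate: once $P$ is assumed cyclic, you may pass to $\norm G P$ and your count $e+\frac{p-1}{e}=p$ for $e\mid p-1$, $p\in\{2,3\}$, goes through (this is the paper's Theorem \ref{thmbound}). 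But for the ``only if'' direction you cannot invoke any such bijection, because you do not yet know $P$ is cyclic---you would need the conjecture for arbitrary Sylow subgroups, which is open. This is precisely the obstruction the paper is organized around: the unconditional proof of Theorem \ref{thmA} replaces your one-line passage to $\norm G P$ with an induction on $|G|$ (Theorems \ref{pequals2} and \ref{pequals3}) that analyzes minimal normal subgroups, uses Fong--Reynolds and block-regularity arguments, and ultimately rests on CFSG-dependent statements about simple groups (Theorems \ref{simplegroupsp2} and \ref{Mandi3}) proved via Lusztig theory. Your remark that the restriction $p\in\{2,3\}$ enters through the conjecture is also off: the conjecture is stated for all primes; the restriction enters through the local arithmetic ($m+\frac{p-1}{m}=p$ for every $m\mid p-1$ only when $p\leq 3$) and through the divisibility result Lemma \ref{divisible_by_p}, which fails for $p>3$.

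A secondary but still substantive gap: even granting the reduction to the case $P\nor L$, $\oh{p'}L=1$, your converse (non-cyclic $P$ forces the count above $p$) is only carried out when $L=P$, and you acknowledge the general case is ``the hard part.'' In the paper this is Theorem \ref{thmnormaldefect}, and it is not a routine orbit count: for $p=2$ it requires a fully-ramified-character argument ruling out $f^2+1=2^n$ for $n>1$, and for $p=3$ it first reduces to $\Phi(D)=1$ via Lemma \ref{divisible_by_p} and then cites Sambale's classification result (\cite[Proposition 15.2]{Sam14}) that $|\irr B|=3$ forces $|D|=3$. So even the local half of your plan needs ideas you have not supplied, and the global half rests on a statement that is still a conjecture.
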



With the definition above, $\sigma_1$ is an element of the subgroup $\c H\leq \c G$ consisting of all $\sigma \in \c G$ for which there exists some integer $f$ such that $\sigma(\xi)=\xi^{p ^f}$ whenever $\xi$ is a root of unity of order not divisible by $p$. Navarro predicted in \cite[Conjecture A]{Nav04} the existence of bijections for the McKay conjecture commuting with the action of $\c H$ on characters. This is the celebrated McKay--Navarro conjecture (sometimes also referred to as the Galois--McKay conjecture), 
which has been recently reduced to a question on finite simple groups in \cite{NSV19}. The McKay--Navarro conjecture admits a blockwise version \cite[Conjecture B]{Nav04}, which remains unreduced at the present moment and which we will refer to as the Alperin--McKay--Navarro conjecture, as it can also be seen as a refined version of the celebrated Alperin--McKay conjecture. In this context, it is natural to wonder the extent to which Theorem \ref{thmA} holds for arbitrary blocks. We propose the following.

\begin{conl}\label{conB}
{\em Let $p \in \{ 2, 3\}$. Let $G$ be a finite group and let $B$ be a $p$-block of $G$ with nontrivial defect group $D$. Then
 $$|\irrz  {B}^{\sigma_1}|=p \text{   if, and only if, } D \text { is cyclic,}$$
 where $\irrz B^{\sigma_1}$ is the set of height zero irreducible characters in the block $B$ that are fixed under the action of $\sigma_1$.}
\end{conl}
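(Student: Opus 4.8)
The statement of Conjecture \ref{conB} is open, so what I would actually aim for is the implication advertised in the abstract: that Conjecture \ref{conB} follows from the Alperin--McKay--Navarro conjecture, with Theorem \ref{thmA} (the principal-block case) serving as the base of the argument. The one elementary fact driving everything is that $\sigma_1$ fixes every root of unity of order prime to $p$, and hence fixes every irreducible character of a $p'$-group and, more generally, every piece of ``$p'$-data'' that arises in Clifford theory. First I would reduce to a normal defect group. Since $\sigma_1 \in \c H$, the Alperin--McKay--Navarro conjecture supplies an $\c H$-equivariant bijection between $\irrz B$ and $\irrz b$, where $b$ is the Brauer correspondent of $B$ in $N = \norm G D$. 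Then $|\irrz B^{\sigma_1}| = |\irrz b^{\sigma_1}|$, while $b$ keeps $D$ as its defect group; as $D \nor N$ we have $D = \oh p N$, and it suffices to treat $b$. The only appeal to a conjecture is in this first step; the remainder is, in principle, unconditional.

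Next I would strip off the $p'$-part by Clifford theory. With $K = \oh{p'}{N}$ one has $DK = D \times K$, and $b$ covers a block of the form $B_0(D) \otimes e_\vartheta$ for some $\vartheta \in \irr K$. Passing to the inertia group $I_N(\vartheta)$ by Fong--Reynolds is $\sigma_1$-equivariant: induction commutes with the Galois action, and because $\vartheta$ is a character of the $p'$-group $K$ it is $\sigma_1$-fixed, so $\sigma_1$ stabilises $I_N(\vartheta)$ and matches the two Fong--Reynolds correspondents. Hence I may assume $\vartheta$ is $N$-invariant, with $D$ still a normal defect group.

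Now I would invoke the structure of blocks with normal defect group. Up to an equivalence that one wants to choose $\c H$-compatibly, $b$ is controlled by its local data $(D, E, \alpha)$, where the inertial quotient $E$ is a $p'$-group and the K\"ulshammer--Puig class $\alpha$ lies in $H^2(E, \CC^\times)$; since $E$ is a $p'$-group this cohomology is itself a $p'$-group, so both $E$ and $\alpha$ are $\sigma_1$-fixed data. Consequently the $\sigma_1$-action on $\irrz b$ should mirror the $\sigma_1$-action on the height-zero characters of the principal block of $D \rtimes E$, that is, of a finite group whose Sylow $p$-subgroup is isomorphic to $D$. Applying Theorem \ref{thmA} to that group then gives $|\irrz b^{\sigma_1}| = |\irrp p {B_0(D \rtimes E)}^{\sigma_1}| = p$ precisely when $D$ is cyclic, which is the assertion of Conjecture \ref{conB}.

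The hard part is making the reduction of the previous paragraph genuinely equivariant. What one really needs is a bijection from the height-zero characters of $b$ onto those of the principal block of a group with Sylow $p$-subgroup $D$ that commutes with $\sigma_1$ (and ideally with all of $\c H$); producing it means carrying a character-triple isomorphism, or the underlying Morita/source-algebra equivalence, through Clifford theory while tracking the Galois action and the cohomological obstruction. The favourable circumstance is that every twist involved is $p'$ in nature and therefore invisible to $\sigma_1$, so that only the action on the $p$-part $D$ survives; but turning this heuristic into a theorem for arbitrary blocks with normal defect group---controlling the obstruction, and verifying that one indeed lands in a principal block so that Theorem \ref{thmA} applies---is the technical heart of the matter, and is exactly the kind of Galois-refined Clifford theory developed by Turull and by Navarro--Sp\"ath--Vallejo.
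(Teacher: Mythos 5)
You have correctly recast the problem exactly as the paper does: Conjecture \ref{conB} is open, and the actual content to prove is that it follows from the Alperin--McKay--Navarro conjecture. Your first move --- applying that conjecture with $\sigma=\sigma_1$ to get $|\irrz B^{\sigma_1}|=|\irrz b^{\sigma_1}|$ for the Brauer first main correspondent $b$ of $\norm G D$, thereby reducing to a block with normal defect group --- is also the paper's first move. But everything after that is a program, not a proof, and its central step is exactly what is missing: you need a $\sigma_1$-equivariant identification of $\irrz b$ with $\irrp p{B_0(D\rtimes E)}$, and you concede you do not have one. This is not a routine technicality. Morita/source-algebra equivalences and character-triple isomorphisms are precisely the objects that are \emph{not} known to be compatible with Galois actions (this incompatibility is the very reason the McKay--Navarro and Alperin--McKay--Navarro conjectures have resisted the standard reduction machinery). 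Worse, when the K\"ulshammer--Puig class $\alpha$ is nontrivial, $b$ is equivalent to a block of a twisted group algebra, i.e.\ to a \emph{non-principal} block of a central extension $D\rtimes \widehat{E}$, not to $B_0(D\rtimes E)$; Theorem \ref{thmA} then does not apply, and what you would need is Conjecture \ref{conB} for that block --- the statement being proved. Note also that in the paper's logical order Theorem \ref{thmA} is itself proved \emph{using} the normal-defect-group case (Theorem \ref{thmnormaldefect} is invoked in the reduction of Section \ref{reductions}), and for a principal block with normal Sylow subgroup and $\oh{p'}G=1$ your construction returns $D\rtimes E\cong G$, so the reduction makes no progress in precisely the case that seeds the induction.

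The paper avoids all of this by never transporting $b$ to another group: it analyzes the block in place. Using the root of $B$ and its canonical character $\theta$, one has the explicit parametrization $\irr b=\{\theta_\lambda \mid \lambda\in\irr D\}$, and the key technical point is Lemma \ref{stabilizers}: $G_{\theta_\lambda^A}=G_{\theta_\lambda}=G_\theta\cap G_\lambda$, i.e.\ $\sigma_1$-semi-invariance of $\theta_\lambda$ forces invariance, because the inertial index is prime to $p$. This yields the explicit description $\irrz B^{\sigma_1}=\bigcup_{\lambda\in\irr{D/\Phi(D)}}\irr{G|\theta_\lambda}$ together with $\sigma_1$-equivariance of Fong--Reynolds (Theorem \ref{generalNavarro}). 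The two directions are then settled by counting: the ``if'' direction via Theorem \ref{thmbound}, which rests only on the \emph{theorem} (not conjecture) that Alperin--McKay--Navarro holds for cyclic defect groups; the ``only if'' direction for $p=2$ by a fully-ramified argument forcing $2^n-1$ to be a perfect square, hence $n=1$, and for $p=3$ by reducing to $\Phi(D)=1$ via the divisibility Lemma \ref{divisible_by_p} and induction, after which $\irrz B^{\sigma_1}=\irr B$ and Sambale's result that $|\irr B|=3$ forces $|D|=3$ finishes. Your heuristic that ``$p'$-twists are invisible to $\sigma_1$'' is, in effect, what Lemma \ref{stabilizers} makes precise --- but at the level of characters of $G$ itself, where it can be proven, rather than at the level of equivalences between block algebras, where it currently cannot.
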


We prove that Conjecture \ref{conB} follows from the Alperin--McKay--Navarro conjecture. In this sense, Theorem \ref{thmA} provides more evidence of the elusive Alperin--McKay--Navarro conjecture.   Since the latter holds whenever $D$ is cyclic, by work of Navarro in \cite{Nav04}, it follows that the ``if'' direction of Conjecture \ref{conB} (and of Theorem \ref{thmA}) holds.
For many consequences of the (Alperin-)McKay-Navarro conjecture, the statements take different forms depending on the prime (see, for instance, \cite{NTT07} and \cite{SF19}). This might well be the case here, however, we are not yet aware of such a statement for $p>3$.

To prove Theorem \ref{thmA}, we use the Classification of Finite Simple Groups.  
In particular, we contribute to the problem of understanding Galois action on the characters in blocks of nonabelian simple groups
in the following way. 

\begin{thml}\label{Mandi3}\label{simplegroupsp2}\label{simplegroups2}
Let $S$ be a nonabelian simple group of order divisible by $p\leq 3$, $P \in \syl p S$ and $X \in \syl p {\aut S}$. Let
$B_0$ be the principal $p$-block of $S$. 
 
\begin{enumerate}

\item[{\rm (a)}] If $P$ is cyclic, then $p=3$ and $\irrp p {B_0}^{\sigma_1}=\{ 1_S, \phi_1, \phi_{2}\}$, where the $\phi_i$ are nontrivial and not $\aut S$-conjugate, 
and some $\phi_i$ is $X$-invariant.

\item[{\rm (b)}]  If $P$ is not cyclic, then $\irrp p {B_0}^{\sigma_1}\supseteq \{ 1_S, \phi_1, \ldots, \phi_p \}$, where the nontrivial $\phi_i$
are pairwise not $\aut S$-conjugate, and some $\phi_i$ is $X$-invariant.
\end{enumerate}
\end{thml}

This paper is structured as follows. In Section \ref{normaldefect} we prove that Conjecture \ref{conB} follows from the Alperin--McKay--Navarro conjecture. To do so, we study the action of $\sigma_1$ on the irreducible characters of blocks with normal defect group. The rest of the paper is devoted to proving Theorem \ref{thmA}. In Section \ref{reductions}, we reduce Theorem \ref{thmA} to statements on finite simple groups, and in Section \ref{simplegroups} we prove Theorem \ref{Mandi3} thus completing the proof of Theorem \ref{thmA}.

\medskip

\noindent {\bf Acknowledgements.}
The authors would like to thank Gabriel Navarro for an inspiring conversation on Galois action and generating properties of Sylow subgroups during the Workshop on Representations of Finite Groups at the 
MFO in March 2019, and for further discussion on the topic.
They would also like to thank the 
MSRI in Berkeley, CA, and its generous staff for providing a collaborative and productive work environment during their residency in the summer of 2019.  In addition, they thank Gunter Malle for his thorough reading of an early draft of this manuscript and for his helpful comments.
Part of this work was done while the first-named author was visiting the Department of Mathematical and Computer Sciences at MSU Denver and the Department of Mathematics at the TU Kaiserslautern. She would like to thank everyone at both departments for their warm hospitality.
Last but not least, the authors are indebted to the anonymous referee for valuable suggestions.   


\section{Blocks with normal defect group}\label{normaldefect}

The aim of this section is to prove that Conjecture \ref{conB} follows from the Alperin--McKay--Navarro conjecture, stated below.

For a fixed prime $p$, consider the set ${\rm Bl}(G)$ of Brauer ($p$-)blocks of $G$ as in \cite{Nav98}, so that ${\rm Bl}(G)$ is a partition of $\irr{G}\cup \ibr G$ (recall that $p$-Brauer characters are defined on $p$-regular elements of $G$). Write $\irr B=B\cap \irr G$ and $\ibr B =B \cap\ibr G$ for any $B \in {\rm Bl}(G)$. Every block $B$ has associated a uniquely defined conjugacy class of $p$-subgroups of $G$, namely its defect groups. Given a block $B$ of $G$ with defect group $D$, we write $B \in {\rm Bl}(G|D)$ and we let $b \in {\rm Bl}(\norm G D | D)$ denote its Brauer first main correspondent. Finally,  $\chi\in{\rm Irr}(B)$ has height zero in $B$ if $\chi(1)_p=|G:D|_p$, and we write $\irrz B$ to denote the subset of height zero characters in $\irr B$.

Assuming the notation of the Introduction, we have that the group $\c G$ acts on $\{ \irr B | \ B\in {\rm Bl}(G)\}$ by \cite[Theorem 3.19]{Nav98}. The group $\c H$ further acts on the set ${\rm Bl}(G)$ by \cite[Theorem 2.1]{Nav04}. While the action of $\c G$ on characters is not natural enough in global-local contexts, Navarro conjectured the following in \cite{Nav04}.
\begin{conj}[Alperin--McKay--Navarro conjecture]
Let $B \in {\rm Bl}(G|D)$ and let $b \in {\rm Bl}(\norm G D | D)$ be its Brauer first main correspondent. If $\sigma \in \c H$, then 
$$|\irrz B^{\sigma}|=|\irrz b ^{\sigma}|\, .$$
\end{conj}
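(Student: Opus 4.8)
The statement to be addressed is the full Alperin--McKay--Navarro conjecture, which is open in general; a realistic proof proposal is therefore a reduction to finite simple groups together with verification in the cases currently within reach, following the template that succeeded for the (non-blockwise) McKay--Navarro conjecture in \cite{NSV19}. The plan is to argue by minimal counterexample: I would take a pair $(G,B)$ of minimal order for which $|\irrz B^\sigma|\neq|\irrz b^\sigma|$ for some $\sigma\in\c H$, and then force $G$ to be, up to the relevant reductions, a quasi-simple group. The two ingredients to be fused are Sp\"ath's reduction of the ordinary Alperin--McKay conjecture to an inductive Alperin--McKay condition on simple groups, and the Galois refinement of the inductive McKay condition developed in \cite{NSV19}. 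The target of the reduction is thus a blockwise, Galois-equivariant inductive condition: for each simple group $S$ one must produce an $\aut S$-equivariant bijection between the height-zero characters of a block and those of its Brauer correspondent in the normaliser of a defect group, which additionally commutes with the action of $\c H$ and respects the attached block-theoretic and cohomological data.

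First I would set up the descent through normal subgroups. Using Fong--Reynolds reduction, the theory of covering blocks, and Dade's theory of block induction, one reduces to the situation where the relevant normal subgroup is a central product of quasi-simple groups. The key point, absent from the ordinary Alperin--McKay reduction, is to carry the action of $\c H$ through every one of these steps: the group $\c H$ acts both on $\irr{G}\cup\ibr{G}$ and on $\mathrm{Bl}(G)$ (by \cite[Theorem 2.1]{Nav04}), and I would check that Clifford correspondence, block covering, and the projective representations witnessing Clifford theory can all be chosen $\c H$-compatibly, so that the bijections assembled in the reduction are strictly $\sigma$-equivariant and not merely defect- and height-preserving. The two cases already secured serve as base cases and consistency checks: blocks with normal defect group, where the extended first main theorem identifies $\irrz B$ with $\irrz b$ through a correspondence one can track under $\sigma$ (this is precisely the content the authors develop in Section \ref{normaldefect}), and blocks with cyclic defect group, where Navarro verified the conjecture in \cite{Nav04} using the explicit description of the characters via the Brauer tree together with their known Galois behaviour.

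The hard part, as in \cite{NSV19} but now compounded by block theory, will be the simple-group verification and the precise bookkeeping of the Galois action across the Clifford-theoretic gluing. Concretely, the obstacle is to exhibit, for each simple group $S$ and each relevant block, an Alperin--McKay bijection that is simultaneously equivariant under automorphisms and under $\c H$, with the two actions interacting through a controlled cocycle: the extensions and projective representations chosen to realise the inductive condition must be selected so that applying $\sigma\in\c H$ permutes them compatibly with the bijection, which forces a delicate compatibility between the Galois action on characters of $\SL$-type groups and the action on the cohomology classes governing the associated character triples. This is where the argument genuinely depends on the classification and on fine information about the characters of quasi-simple groups of Lie type, in both defining and non-defining characteristic, and it is the reason the blockwise statement, unlike its non-blockwise counterpart, remains unreduced. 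The present paper sidesteps this difficulty by using the conjecture as a hypothesis rather than attempting the reduction, and I expect any complete proof to require first isolating the correct inductive condition and then a case-by-case analysis for the groups of Lie type.
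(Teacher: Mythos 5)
There is no proof to compare against: in the paper the Alperin--McKay--Navarro conjecture is exactly that --- a conjecture, stated as a hypothesis and explicitly described as ``unreduced at the present moment.'' The paper never attempts to prove it; it only establishes special cases (blocks with normal defect group, via Lemma \ref{stabilizers} and Theorem \ref{generalNavarro}, and the cyclic-defect case by citing \cite[Theorem 3.4]{Nav04}) and then derives Conjecture B from the conjecture as a black box. Your proposal correctly diagnoses this, but it is a research program, not a proof: the blockwise Galois-inductive condition is never formulated, the descent through normal subgroups is only asserted (``I would check that Clifford correspondence, block covering, and the projective representations \ldots can all be chosen $\c H$-compatibly''), and the simple-group verification is acknowledged to be out of reach. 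Every load-bearing step is deferred, so as a proof of the statement there is a total gap.

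It is worth noting concretely why the step you wave through is the genuine obstruction, and the paper's own partial result illustrates it. The equivariance of the Fong--Reynolds correspondence proved in Theorem \ref{generalNavarro} holds only for $\sigma_1$, and its proof via Lemma \ref{stabilizers} exploits two special features: the restriction of $\sigma_1$ to the relevant cyclotomic field has order a power of $p$, while $G_\theta/CD$ is a $p'$-group, so from $\lambda^g=\lambda^a$ one forces $a=1$ by comparing a $p$-power order with a $p'$-number $m$. For a general $\sigma\in\c H$ --- which is what the conjecture quantifies over --- the Galois element need not have $p$-power order, this coprimality argument collapses, and indeed $\c H$ need not even fix $\theta_\lambda$ up to the controlled twist used there. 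So your blanket claim that Fong--Reynolds, block induction, and the Clifford-theoretic projective representations ``can all be chosen $\c H$-compatibly'' is precisely the point where new ideas are required rather than bookkeeping; asserting it reproduces the difficulty, it does not resolve it. Your proposal is honestly calibrated as a plan, but it proves nothing beyond what the paper already establishes, and the cases it cites as ``base cases'' ($D$ normal, $D$ cyclic) are the paper's and Navarro's theorems, not yours.
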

Here we are only concerned with the action of a specific element of $\c H$, namely $\sigma_1$. Recall that  $\sigma_1 \in \c H$ fixes $p'$-roots of unity and sends any root of unity of order a power of $p$ to its $(p+1)$st power. If $G$ is a finite group of order dividing some integer $n$ and $\xi_n$ is a primitive $n$th root of unity, then by elementary number
theory, the restriction $\omega$ of $\sigma_1$ to the $n$th cyclotomic field $\QQ(\xi_n)$ has order
a power of $p$, and $\omega$ acts as $\sigma_1$ on the ordinary characters of every subgroup of $G$. 
Abusing notation, we will also write $\sigma_1$ for any  such restriction. In particular, $\sigma_1$ fixes the elements of $\ibr G$, and hence acts trivially on ${\rm Bl }(G)$. (Note that in general $\mathcal G$ does not act on $\ibr G$, but $\mathcal H$ does by Theorem 2.1 of \cite{Nav04}.)


In order to prove that Conjecture \ref{conB} follows from the Alperin--McKay--Navarro conjecture, we need to study blocks with a normal defect group. We follow the notation in Chaper 9 of \cite{Nav98}. Let $B \in {\rm Bl}(G|D)$ and assume that $D\nor G$.  Write $C=\cent G D$. We will denote by $b \in {\rm Bl}(C D| D)$ a {\em root} of $B$, and we will let $\theta \in \irr b$ be the {\em canonical character} associated with $B$, which is unique up to $G$-conjugacy (see \cite[Theorem 9.12]{Nav98} and the subsequent discussion). Recall that $D\sbs \ker \theta$ and $\theta$ has $p$-defect zero when viewed as a character of $CD/D$ (that is, $\theta(1)_p=|CD:D|_p$), the stabilizer of the block $b$ is $G_b=G_\theta$, and the inertial index $|G_\theta:CD|$ is not divisible by $p$.
In this situation, $\irr b=\{ \theta_\lambda \ | \ \lambda \in \irr D\}$, where the irreducible characters $\theta_\lambda\in\irr{CD}$ are defined for $x\in CD$ as follows:  $\theta_\lambda(x)=\lambda(x_p)\theta(x_{p'})$ if $x_p\in D$ and $\theta_\lambda(x)=0$ otherwise. One can see that \[G_{\theta_\lambda}=G_\theta\cap G_\lambda.\] 
Let $c \in {\rm Bl}(G_b|D)$ be the  Fong-Reynolds correspondent of $b$ and $B$ as in \cite[Theorem 9.14]{Nav98}. Then the induction map $\irr c\to \irr B$ defines a height-preserving bijection. By \cite[Theorems 9.21 and 9.22]{Nav98} $c=b^{G_b}$ is the only block of $G_b$ that covers $b$ and 
\begin{equation}\label{IrrB} \irr B=\bigcup_{\lambda \in \irr {D}} \irr{G|\theta_\lambda}\, .\end{equation}
It is not difficult to see that height zero characters of $B$ further lie over characters parametrized by linear characters of $D$, so that
\begin{equation}\label{Irr0B}\irrz B=\bigcup_{\lambda \in \irr {D/D'}} \irr{G|\theta_\lambda}\, .\end{equation}

In order to explicitly describe the set $\irrz B ^{\sigma_1}$ when the defect group of $B$ is normal we will use the following technical lemma.

\begin{lem}\label{stabilizers}  
Let $G$ be a finite group and let $p$ be a prime. Suppose that $B$ is a block of $G$ with normal defect group $D$. 
Let $b$ be a root of $B$ with canonical character $\theta$.  Write $A=\langle \sigma_1\rangle\leq {\rm Gal}(\mathbb{Q}(\xi_{|G|})/\mathbb{Q})$. 
If $\lambda$   is a linear character of $D$, then let
$G_{\theta_\lambda^A}=\{ g \in G \ | \ (\theta_\lambda)^g=(\theta_\lambda)^a \text{ \ for some \ } a \in A \}$. With this definition $$G_{\theta_\lambda^A}=G_{\theta_\lambda}=G_\theta \cap G_\lambda\, .$$
\end{lem}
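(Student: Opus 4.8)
The plan is to prove the two displayed equalities separately. The second, $G_{\theta_\lambda}=G_\theta\cap G_\lambda$, is already established in the discussion preceding Lemma~\ref{lemma5.1NT19}, so I would simply invoke it. The reverse inclusion $G_{\theta_\lambda}\subseteq G_{\theta_\lambda^A}$ is immediate by taking $a=1$. Thus the whole content is the inclusion $G_{\theta_\lambda^A}\subseteq G_{\theta_\lambda}$.

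First I would record the action of the powers of $\sigma_1$ on $\theta_\lambda$. Since $D$ is a $p$-group and $\lambda$ is linear, each value $\lambda(x_p)$ is a $p$-power root of unity, so $\sigma_1$ sends it to $\lambda(x_p)^{p+1}$; on the other hand $\theta(x_{p'})$ is a sum of $p'$-roots of unity and is therefore fixed by $\sigma_1$. Evaluating on $x\in CD$ with $x_p\in D$ by means of $\theta_\lambda(x)=\lambda(x_p)\theta(x_{p'})$ (and noting both sides vanish when $x_p\notin D$), this gives $(\theta_\lambda)^{\sigma_1^k}=\theta_{\lambda^{(p+1)^k}}$ for every $k\geq 0$.

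Next I would take $g\in G_{\theta_\lambda^A}$, so that $(\theta_\lambda)^g=(\theta_\lambda)^{\sigma_1^k}=\theta_{\lambda^{(p+1)^k}}$ for some $k$. The right-hand side lies in $\irr b$, so $g$ fixes the block $b$; hence $g\in G_b=G_\theta$. Using the $G_\theta$-equivariance of the parametrization $\lambda\mapsto\theta_\lambda$ (which follows directly from the defining formula together with $\theta^g=\theta$), I obtain $(\theta_\lambda)^g=\theta_{\lambda^g}$, and therefore $\lambda^g=\lambda^{(p+1)^k}$ by injectivity of $\lambda\mapsto\theta_\lambda$.

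The final and key step is an order argument. Conjugation by $g$ preserves the cyclic group $\langle\lambda\rangle\leq\irr{D/D'}$, whose order is a power of $p$, and acts on it as $\mu\mapsto\mu^{(p+1)^k}$; since $p+1\equiv 1\pmod p$, this automorphism has $p$-power order. On the other hand, every element of $CD$ acts on the linear characters of $D$ by an inner automorphism of $D$, hence trivially, so the action of $g$ on $\lambda$ depends only on the image of $g$ in $G_\theta/CD$, a group of order prime to $p$. Thus the order of $g$ acting on $\lambda$ is simultaneously a power of $p$ and a $p'$-number, forcing $\lambda^g=\lambda$, that is, $g\in G_\lambda$. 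Combined with $g\in G_\theta$ this yields $g\in G_\theta\cap G_\lambda=G_{\theta_\lambda}$, completing the nontrivial inclusion. I expect this order argument — reconciling the $p$-power nature of the $\sigma_1$-action with the $p'$-inertial index $|G_\theta:CD|$ — to be the main obstacle, since it is precisely where the defining properties of the canonical character are used.
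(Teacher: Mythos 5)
Your proof is correct, and its overall skeleton matches the paper's: establish $g\in G_\theta$ first, then force $g\in G_\lambda$ by playing the $p$-power order of the $\sigma_1$-action against the $p'$ inertial index $|G_\theta:CD|$. Two sub-steps are genuinely different, though. For $g\in G_\theta$, the paper computes character values: from $(\theta_\lambda)^g=\theta_{\lambda^a}$ it deduces $\theta^g(x_{p'})=\theta(x_{p'})$ for every $x\in CD$ whose image in $CD/D$ is $p$-regular, and concludes $\theta^g=\theta$ because both characters, having defect zero as characters of $CD/D$, vanish on the remaining classes; you instead observe that $(\theta_\lambda)^g$ lies in $\irr{b^g}\cap\irr{b}$, so $b^g=b$ and $g\in G_b=G_\theta$ --- shorter, but leaning on the identification $G_b=G_\theta$ from the setup rather than on the vanishing properties of the canonical character. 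Second, you compute the Galois action explicitly as $(\theta_\lambda)^{\sigma_1^k}=\theta_{\lambda^{(p+1)^k}}$, directly from the fact that $\sigma_1$ fixes $p'$-roots of unity and raises $p$-power roots of unity to the $(p+1)$st power; this subsumes the paper's preliminary step that $\theta$ is $A$-fixed (obtained there from $b^a=b$ and uniqueness of the canonical character) and makes explicit why the relevant automorphism has $p$-power order, a point the paper leaves implicit in ``which forces $a=1$''. The closing coprimality argument is the same in substance, but your formulation --- a single automorphism of $\langle\lambda\rangle$ that is simultaneously of $p$-power order and of $p'$-order --- also avoids the easy but unstated commutation $\lambda^{g^m}=\lambda^{a^m}$ needed to iterate the paper's relation $\lambda^g=\lambda^a$. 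Each route buys something: the paper's is self-contained at the level of character values, while yours is more economical and makes the underlying number-theoretic mechanism transparent.
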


\begin{proof}
Write $C=\cent G D$. Recall that $b$ is a block of $CD$ of defect $D$ and $\theta \in \irr {CD}$ has defect zero as a character of $CD/D$. Note that $\theta$ is $A$-fixed since $b^a=b$ for every $a \in A$. Let $g \in G_{\theta_\lambda^A}$. We start by proving that $g\in G_\theta$. Since $\theta$ is $A$-fixed, by the definition of $\theta_\lambda$ we have $(\theta_\lambda)^g=(\theta_{\lambda})^a=\theta_{\lambda^a}$ for some $a \in A$. 
Evaluating on $D$ we see that
$$\theta(1)\lambda^{a}(x)=\theta_{\lambda^a}(x)=\theta_\lambda^g(x)=\theta(1)\lambda^g(x) \, ,$$
for every $x \in D$. Hence $\lambda ^g =\lambda^a$. Let $x \in CD$ be such that $xD\in (CD/D)^0$, the set of $p$-regular elements of $CD/D$, and notice that $x_p\in D$. (Otherwise $\theta(x)=0$.) Then
$$\lambda^g(x_p)\theta^g(x_{p'})=\theta_\lambda^g(x)=\theta_\lambda^a(x)=\lambda^a(x_p)\theta(x_p')=\lambda^g(x_p)\theta(x_p')\, .$$ This implies $\theta^g(x_{p'})=\theta(x_{p'})$. Since $xD=x_{p'}D$, then $\theta^g =\theta$ and $g \in G_\theta$. 

Next we prove that $g\in G_\lambda$. We know that $\lambda^g =\lambda^a$ for some $a \in A$, and that $g \in G_\theta$.
Since $G_\theta/CD$ is a $p'$-group, then $\lambda^{g^m}=\lambda$ for some integer $m$ relatively prime to $p$. In particular,
$\lambda^{a^m}=\lambda$ and the order of $a$ as an 
element of ${\rm Gal}(\QQ(\lambda)/\QQ)={\rm Gal}(\mathbb{Q}(\xi_{o(\lambda)})/\mathbb{Q})$ divides $m$, which forces $a=1$ and $\lambda^g=\lambda$, as wanted. 
\end{proof}




\begin{lem}\label{generalNavarro}
Let $G$ be a finite group and let $p$ be a prime. Suppose that $B$ is a block of $G$ with a normal defect group $D$. 
Let $b$ be a root of $B$ with canonical character $\theta$. Then
$$\irrz B ^{\sigma_1}=\bigcup_{\lambda \in \irr{D/\Phi(D)}}\irr{G|\theta_\lambda}\, ,$$
where $\Phi(D)$ is the Frattini subgroup of $D$. Moreover, if $c \in {\rm Bl}(G_b|D)$ is the Fong-Reynolds correspondent of
$B$, then
$$|\irrz B ^{\sigma_1}|=|\irrz {c}^{\sigma_1}| \, .$$
\end{lem}
\begin{proof}
First notice that as a $p$-group, $D$ has a unique block, the principal one, and $\irrz {B_0(D)}=\irrp p {D}=\irr{D/D'}$. Then $\irrp p{D}^{\sigma_1}=\irr{D/\Phi(D)}$. Since $D/\Phi(D)$ is 
$p$-elementary abelian, one inclusion is straight-forward. To see that $\irrp p {D}^{\sigma_1}\sbs \irr{D/\Phi(D)}$ notice that if $\lambda\in \irr {D/D'}$ is $\sigma_1$-fixed, then $\lambda^{\sigma_1}=\lambda^{p+1}=\lambda$, and hence $|D/\ker \lambda|\leq p$, implying $ \Phi(D)\sbs \ker \lambda$.

Write $A=\langle \sigma_1 \rangle$ and let $G_{\theta_\lambda^A}$ be as in Lemma \ref{stabilizers}. By Equation (\ref{Irr0B}), we know that
$$\irrz B=\bigcup_{\lambda \in \irr {D/D'}} \irr{G|\theta_\lambda}\, .$$
If $\chi \in \irrz B^{\sigma_1}$ lies over $\theta_\lambda$, then $(\theta_\lambda )^{\sigma_1}=(\theta_\lambda) ^g$, for some $g \in G$. In particular,
$g \in G_{\theta_\lambda^A}=G_{\theta_\lambda}=G_\theta\cap G_\lambda$ by Lemma \ref{stabilizers}. Then $\lambda^{\sigma_1}=\lambda^g =\lambda$. Hence $\Phi(D)\sbs \ker \lambda$ and $\lambda \in \irr{D/\Phi(D)}$.

\smallskip

Conversely, let $\chi \in  \irr{G|\theta_\lambda}$, where $\lambda \in \irr{D/\Phi(D)}$. Then $\lambda^{\sigma_1}=\lambda$. As $b^{\sigma_1}=b$, we see $\sigma_1$ fixes $\theta$ too. Then $(\theta_\lambda)^{\sigma_1}=\theta_\lambda$. Let $\psi \in \irr{G_{\theta_\lambda}}$ be the Clifford correspondent of $\chi$ over $\theta_\lambda$. Since $G_{\theta_\lambda}\sbs G_\theta$, we know that $p$ does not divide the order of $G_{\theta_\lambda}/CD$. By Lemma  \cite[Lemma 5.1]{NT19}, $\psi$ is $\sigma_1$-invariant and so is $\chi$.

\smallskip

To prove the last part of the statement, recall that the Fong-Reynolds correspondence states that the induction map $\psi \mapsto \psi^G$ provides a bijection  $\irrz {c}\rightarrow \irrz B $. In particular, $|\irrz {c}^{\sigma_1}|\leq |\irrz B^{\sigma_1}|$.
Now let $\chi \in \irrz B ^{\sigma_1}$ lie over $\theta_\lambda$, for some $\lambda \in \irr{D/\Phi(D)}$ by the first part of this proof. Then $(\theta_\lambda)^{\sigma_1}=(\theta_\lambda)^g$
 for some $g \in G$. In particular, $g \in G_{\theta_\lambda^A}$. Since $G_{\theta_\lambda^A} =G_{\theta_\lambda}$ by Lemma \ref{stabilizers}, 
$\theta_\lambda$ is $\sigma_1$-fixed. Let $\xi \in \irr{G_{\theta_\lambda}|\theta_\lambda}$ be the Clifford correspondent of $\chi$. Since both $\chi$ and $\theta_\lambda$ are $\sigma_1$-fixed then so is $\xi$. We have that $\xi^{G_b}$ is the Fong-Reynolds correspondent of $\chi$ by the transitivity of block induction (see \cite[Problem 4.2]{Nav98}), which is $\sigma_1$-fixed. 
\end{proof}

 The Alperin--McKay--Navarro conjecture holds for blocks with cyclic defect groups by \cite[Theorem 3.4]{Nav04}. We obtain the following as a consequence of
this fact. 

\begin{lem}\label{thmbound}
Let $G$ be a finite group and let $B$ be a block of $G$ with cyclic defect group $D$. Then 
$$1\leq |\irrz B^{\sigma_1}|\leq p\, .$$
The set $\irrz B^{\sigma_1}$ has minimal size 1 if, and only if, $D$ is trivial. 
Furthermore, if $p\in \{2,3\}$ and $D$ is nontrivial, then 
$$|\irrz B ^{\sigma_1}|=p\, .$$
\end{lem}

\begin{proof}
By \cite[Theorem 3.4]{Nav04}, we may assume that $D \nor G$. Write $C=\cent G D\supseteq D$. Let $b \in {\rm Bl}(C| D)$ be a root of $B$ with canonical character $\theta$.
By Lemma \ref{generalNavarro}, we may assume that $\theta$ is $G$-invariant (in particular, $G/C$ is a $p'$-group) and
$$\irrz B^{\sigma_1}=\bigcup_{\lambda \in \irr{D/\Phi(D)}}\irr{G|\theta_\lambda}\, \sbs{\rm Irr}(G/\Phi(D)).$$
Write $\overline{G}=G/\Phi(D)$ and use the bar convention. Let $\overline F=\cent {\overline G} {\overline D}$, where $\Phi(D)\sbs F\leq G$. We
claim that $F=C$. Clearly $C\sbs F$. Note that $\overline F$ acts trivially on $\overline D$ and coprimely on $D$. By  \cite[Theorem 3.29]{Isa08} we have that $\overline F$
acts trivially on $D$ as well. Thus $F=C$ as claimed.

 
 Notice that since $D$ is cyclic and $G/C$ is a $p'$-group, then $G/C$ is isomorphic to a subgroup of ${\sf C}_{p-1}$. Say $|G/C|=m$ and 
 let $\{ \lambda_i \}_{i=1}^t$ be a complete set of representatives of the $G/C$-orbits on $\irr {\overline{D}}\setminus \{ 1_D\}$, where
here  we view $\irr {\overline D}\sbs \irr D$, and with this identification $\irr {\overline D}$ are exactly the elements of $\irr D$ with order dividing $p$.
  Note that $\ker {\lambda_i}=\Phi(D)$ for all $1\leq i\leq t$, hence $G_{\lambda_i}=C$  for every $1\leq i\leq t$, and all the orbits of the action of $G/C$ on ${\rm Irr}(\overline{D})\setminus\{1_D\}$ have the same size $m$. In particular, $t=\frac{p-1} m$. 
Since $\theta$ is $G$-invariant, for every $1\leq i \leq t$ we have that $G_{\theta_{\lambda_i}}=G_{\lambda_i}=C$, and by the Clifford correspondence, $|{\rm Irr}(G|\theta_{\lambda_i})|=|{\rm Irr}(C|\theta_{\lambda_i})|=1$. Also, since $G/C$ is cyclic, $\theta$ extends to $G$ and therefore by Gallagher theory $|{\rm Irr}(G|\theta)|=m$. Then $$|\irrz B^{\sigma_1}|= |\irr{G|\theta}|+\sum_{i=1}^t |\irr{G|\theta_{\lambda_i}}|=m+t=m+\frac{p-1}m\leq p\,.$$
Note that if $p=2,3$ then $m+\frac{p-1} m=p$, whenever $m$ divides $p-1$. Also notice that
$|\irrz B^{\sigma_1}|=1$ if, and only if, $D=1$.
\end{proof}

The upper bound in Lemma \ref{thmbound} is not generally attained if $p>3$, as shown by the dihedral group ${\sf D}_{2p}$, which
satisfies $|\irr{B_0({\sf D}_{2p}})^{\sigma_1}|<p$. We care to remark that the numerical condition $|\irrz {B}^{\sigma_1}|\leq p$ does not generally imply that a defect group $D$ of $B$ is cyclic. For instance, for $p=11$, the semidirect product $H=\mathbb{F}_{11}^2\rtimes{ SL}_2(5)$ satisfies $|{\rm Irr}_{11'}(B_0(H))^{\sigma_1}|=|\irr H|=10$. (We would like to thank Gabriel Navarro for providing us with this example.) 

\smallskip

We will need the following divisibility result, which we obtain by adaptating the proof of \cite[Theorem 5.2]{Gow79}. 

\begin{lem}\label{divisible_by_p} Let $G$ be a finite group, let $p\in\{2,3\}$, and let $B$ be a block of $G$ with nontrivial defect group $D$. Then $p$ divides $|{\rm Irr}_0(B)^{\sigma_1}|$. 
\end{lem}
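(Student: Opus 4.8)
The plan is to split the statement into a soft counting step and a hard arithmetic core. The soft step reduces the assertion to the purely block-theoretic divisibility $p\mid k_0(B)$, where $k_0(B):=|\irrz B|$, and the core then establishes that divisibility by adapting Gow's generalized decomposition number technique. For the reduction I would first record that $A=\langle\sigma_1\rangle$ is a $p$-group: as explained in the Introduction, the restriction of $\sigma_1$ to $\QQ(\xi_{|G|})$ has order a power of $p$. This group $A$ permutes the finite set $\irrz B$, because $\sigma_1$ fixes every element of $\ibr G$ and hence stabilizes $B$ as a block, and because a Galois automorphism fixes the rational integer $\chi(1)$ and so preserves heights. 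Since a $p$-group acting on a finite set has a number of fixed points congruent modulo $p$ to the size of the set, I obtain
\[
|\irrz B^{\sigma_1}|\equiv |\irrz B|=k_0(B)\pmod p.
\]
Thus the Lemma is equivalent to the assertion $p\mid k_0(B)$ for $D\neq 1$ and $p\in\{2,3\}$, and it is this divisibility that must be proved; for cyclic $D$ it is already contained in Theorem \ref{thmbound}.

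To prove $p\mid k_0(B)$ I would follow the pattern of \cite[Theorem 5.2]{Gow79}. Since $D\neq 1$, fix $u\in\zent D$ of order $p$ and consider the subsection $(u,b_u)$ in the sense of \cite{Nav98}, with $b_u$ a block of $\cent G u$ having defect group $D$. The generalized decomposition numbers $d^u_{\chi\varphi}$ (with $\chi\in\irr B$, $\varphi\in\ibr{b_u}$) record the values of $\chi$ on the $p$-singular elements $us$, where $s$ is a $p'$-element of $\cent G u$, and they lie in $\ZZ[\xi_p]$. The available tools are then the block orthogonality relations among the columns of the matrix $(d^u_{\chi\varphi})$, the height function (which selects exactly the rows counted by $k_0(B)$), and the action of $\mathrm{Gal}(\QQ(\xi_{|G|})/\QQ)$, which permutes the rows while transforming $u$ through its powers and acts on $\ZZ[\xi_p]$. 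The arithmetic input specific to $p\in\{2,3\}$ is that $\ZZ[\xi_p]$ carries essentially no units or extra roots of unity and $(\ZZ/p)^\times$ is trivial or of order two. For $p=2$ the $d^u_{\chi\varphi}$ are rational integers and the argument reduces to Gow's Frobenius--Schur/real-character computation, whereas $p=3$ is the genuinely new case, handled through the order-two Galois action on $\ZZ[\xi_3]$.

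I expect the main obstacle to be exactly this arithmetic core: assembling the orthogonality relations, the height function, and the Galois action into a clean proof that the resulting count is divisible by $p$, and carrying this out for $p=3$ and not merely for $p=2$. The restriction $p\leq 3$ is indispensable here — it is the smallness of $(\ZZ/p)^\times$ and the absence of exotic units in $\ZZ[\xi_p]$ that force the uncontrolled (``isolated'') part of the count to collapse modulo $p$, and the method yields no such divisibility for larger primes. This is consistent with the failure, noted after Theorem \ref{thmbound}, of the corresponding statements when $p>3$, and it is why the passage from $p=2$ to $p=3$, rather than the formal reduction in the first paragraph, carries the real weight of the proof.
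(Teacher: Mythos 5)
Your first paragraph is correct, and it is precisely the closing step of the paper's own proof: $A=\langle\sigma_1\rangle$ acts on $\irrz B$ as a group of order a power of $p$ (it fixes $\ibr G$, hence fixes $B$, and preserves degrees, hence heights), so the class equation gives $|\irrz B^{\sigma_1}|\equiv|\irrz B|\pmod p$, reducing the lemma to the divisibility $p\mid|\irrz B|$. The genuine gap is that this reduction is the easy half, and your treatment of the hard half never becomes a proof. You fix $u\in\zent D$ of order $p$, introduce the generalized decomposition numbers $d^u_{\chi\varphi}$, and list the tools you intend to combine (column orthogonality, heights, the Galois action on $\ZZ[\xi_p]$), but you never write down the congruence these tools are supposed to yield; for $p=3$ in particular, ``handled through the order-two Galois action on $\ZZ[\xi_3]$'' is a placeholder rather than an argument, and you say yourself that assembling these ingredients is the main obstacle you have not overcome. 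As it stands, the core assertion $p\mid|\irrz B|$ is asserted, not proved.

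For comparison, the paper's proof of that core is entirely global and elementary; no subsections or generalized decomposition numbers appear. Set $\psi=\sum_{\chi\in\irr B}\chi(1)\chi$; by weak block orthogonality \cite[Corollary 3.7]{Nav98}, $\psi$ vanishes on $p$-singular elements, and by \cite[Theorem 3.28]{Nav98} its degree satisfies $\psi(1)=p^{2a-d}c$ with $p\nmid c$, where $|P|=p^a$ and $|D|=p^d$. Writing $\chi_i(1)=p^{a-d+h_i}b_i$ with $p\nmid b_i$ and $h_i$ the height, and letting $k=|\irrz B|$, comparison of the two expressions for $\psi(1)$ gives $p^dc=\sum_{i=1}^k b_i^2+\sum_{j>k}p^{2h_j}b_j^2$, whence $\sum_{i=1}^k b_i^2\equiv 0\pmod p$ since $d\geq 1$. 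The only point where $p\in\{2,3\}$ enters is the one-line fact that $b^2\equiv 1\pmod p$ whenever $p\nmid b$, which holds exactly because $(\ZZ/p\ZZ)^\times$ has exponent at most $2$; this forces $k\equiv 0\pmod p$. So your instinct that the restriction on $p$ must come from the smallness of $(\ZZ/p\ZZ)^\times$ is right, but it is exploited through this congruence on rational integers, not through units of $\ZZ[\xi_p]$. The quickest repair of your proposal is to replace its second and third paragraphs by this computation.
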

\begin{proof}
Write 
\begin{equation}\label{divisibility}\psi=\sum_{\chi \in \irr{B}} \chi(1) \chi\, ,\end{equation} and notice that $\psi$
is a character of $G$ that vanishes on $p$-singular elements by the weak block orthogonality relation (see \cite[Corollary 3.7]{Nav98}). In particular,
$\psi_P=f \rho_P$ for some natural number $f$, where $\rho_P$ denotes the regular character of $P$.

Let $\irr  {B}=\{ \chi_1, \ldots, \chi_t\}$ and write $\chi_i(1)=p^{a-d+h_i}b_i$, where $|P|=p^a$, $|D|=p^d$, $h_i\geq 0$ is the height of $\chi_i$ and $p$ does not divide $b_i$, for $1\leq i\leq t$. Arrange the elements in $\irr B$ in such a way that $\irrz  {B}=\{ \chi_1, \ldots, \chi_k\}$, so that $h_j\geq 1$ for all $k+1\leq j\leq t$. By  \cite[Theorem 3.28]{Nav98} we have that $\psi(1)=p^{2a-d}c$, where $c$ is a non-negative integer relatively prime to $p$. Thus, evaluating (\ref{divisibility}) at $1 \in G$ we obtain
$$p^dc=\sum_{i=1}^k b_i^2 + \sum_{j=k+1}^t p^{2h_j}b_j^2 \, .$$
As $d\geq 1$, we get $\sum_{i=1}^k b_i^2\equiv 0$ mod $p$. Since $p\in\{2,3\}$, we have that $b_i^2\equiv 1$ mod $p$ for every $1\leq i \leq k$, and hence $k$ is divisible by $p$.


Recall that the group $A=\langle \sigma_1 \rangle$ acts on $\irrz {B}$, and as such, we may view $A$ as having order a power of $p$. Since
$|\irrz {B}^{\sigma_1}|=|\irrz {B}^A|$, we obtain that $p$ divides $|\irrz {B}^{\sigma_1}|$ by the class equation for group actions.
\end{proof}

The conclusion of the result above does not hold if $p>3$, as the dihedral group ${\sf D}_{2p}$ provides a counterexample. Indeed, ${\sf D}_{2p}$  has a unique $p$-block and every irreducible character has $p'$-degree and is $\sigma_1$-fixed. Hence $|\irrp p {B_0({\sf D}_{2p})}^{\sigma_1}|=|\irr{{\sf D}_{2p}}|=2+(p-1)/2<p$. 

\medskip

Finally, we prove the main result of this section.

\begin{thm}\label{thmnormaldefect}
Let $p \in \{ 2, 3\}$. Let $G$ be a finite group and let $B$ be a $p$-block of $G$ with a nontrivial normal defect group $D$. Then
 $$|\irrz  {B}^{\sigma_1}|=p \text{  if, and only if, } D \text { is cyclic.}$$
 In particular, Conjecture \ref{conB} follows from the Alperin--McKay--Navarro conjecture. 
\end{thm}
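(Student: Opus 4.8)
The strategy is to treat the two implications separately; the backward one is immediate and the forward one carries all the weight. If $D$ is cyclic and nontrivial, then Theorem \ref{thmbound} gives $|\irrz B^{\sigma_1}| = p$ for $p \in \{2,3\}$, which is the ``if'' direction. Granting the equivalence, the closing assertion follows by combining it with the Alperin--McKay--Navarro conjecture for the pair $(B,b)$, where $b \in {\rm Bl}(\norm G D \mid D)$ is the Brauer first main correspondent: since $D \nor \norm G D$, the theorem applies to $b$, while the conjecture (for $\sigma_1 \in \c H$) yields $|\irrz B^{\sigma_1}| = |\irrz b^{\sigma_1}|$; as $B$ and $b$ share the defect group $D$, the equivalence for $b$ transports to $B$, which is exactly Conjecture \ref{conB}.

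For the forward direction I would first reduce to the case that the canonical character $\theta$ is $G$-invariant, using the Fong--Reynolds part of Theorem \ref{generalNavarro}: replacing $B$ by its correspondent $c \in {\rm Bl}(G_b \mid D)$ preserves both the value $|\irrz B^{\sigma_1}| = |\irrz c^{\sigma_1}|$ and the defect group $D$, hence the property of $D$ being cyclic. With $\theta$ now $G$-invariant, set $E := G/CD$, a $p'$-group, and $V := D/\Phi(D)$. The conjugation action of $G$ on $V$ factors through $E$, because $C$ acts trivially and inner automorphisms of $D$ act trivially on $V$ (as $[D,D] \leq \Phi(D)$); thus $E$ acts linearly on the $\FF_p$-space $V$, and $D$ is cyclic precisely when $\dim_{\FF_p} V = 1$. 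By Theorem \ref{generalNavarro},
$$|\irrz B^{\sigma_1}| = \sum_{\lambda} |\irr{G \mid \theta_\lambda}|,$$
summed over representatives $\lambda$ of the $E$-orbits on $\irr V$ (these coincide with the $G$-orbits of the $\theta_\lambda$, since $\theta_\lambda^g = \theta_{\lambda^g}$). As $\irr{G \mid \theta} \subseteq \irrz B^{\sigma_1}$ is nonempty and Lemma \ref{divisible_by_p} forces $p$ to divide the total, we have $|\irrz B^{\sigma_1}| \geq p$; it then suffices to prove the strict inequality $|\irrz B^{\sigma_1}| > p$ whenever $\dim_{\FF_p} V \geq 2$, which together with the ``if'' direction yields the full equivalence.

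I would distinguish cases by the fixed space $\irr V^E$, a subspace of $\irr V$. If $E$ fixes a nonzero character, so $\dim \irr V^E = d \geq 1$ and $|\irr V^E| = p^d \geq p$, then each of these $p^d$ characters is a singleton orbit contributing a nonempty summand, and these contributions are pairwise disjoint. When $\irr V^E = \irr V$, the group $E$ acts trivially on $V$, hence (by coprimality, exactly as in the proof of Theorem \ref{thmbound}) trivially on $D$, forcing $G = CD$ and $|\irrz B^{\sigma_1}| = |\irr V| = p^{\dim_{\FF_p} V} \geq p^2$; otherwise some orbit is nontrivial, giving $|\irrz B^{\sigma_1}| \geq p^d + 1 > p$. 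In all these subcases the strict inequality holds.

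The remaining case, $\irr V^E = 0$ (equivalently $V^E = 0$, by Brauer's permutation lemma), is where I expect the real difficulty. If $E$ has more than $p-1$ nontrivial orbits on $\irr V$, then the trivial orbit together with these already gives $|\irrz B^{\sigma_1}| \geq 1 + p > p$. The delicate configuration is thus when $E$ has very few orbits, i.e. acts (almost) transitively on $\irr V \setminus \{0\}$; then $|\irrz B^{\sigma_1}|$ is governed by the contribution $|\irr{G \mid \theta}|$ of the trivial orbit (and the few point-stabilizer contributions), and the crux is to show these counts are too large for the total to equal $p$. Since $|\irr{G \mid \theta}|$ is controlled by the projective representation theory of $E = G/CD$, the obstacle is to rule out that this count is as small as $p$ would permit. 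The plan is to exploit the severely restricted structure of (almost) transitive $p'$-subgroups of ${\rm GL}_r(\FF_p)$ with $r \geq 2$: using the classification of transitive linear groups (Hering's theorem), together with the fact that sharply transitive (fixed-point-free) $p'$-groups have cyclic or generalized quaternion Sylow subgroups, one should conclude that such $E$ cannot produce the small character counts needed, whence $|\irrz B^{\sigma_1}| \geq 2p$. For the small primes $p \in \{2,3\}$ this should reduce to a short explicit verification over the relevant groups.
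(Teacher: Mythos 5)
Your setup (the ``if'' direction via Theorem \ref{thmbound}, the deduction of Conjecture \ref{conB} from the Alperin--McKay--Navarro conjecture, the Fong--Reynolds reduction to $G$-invariant $\theta$, and the orbit-sum formula $|\irrz B^{\sigma_1}|=\sum_\lambda|\irr{G|\theta_\lambda}|$ over $E$-orbit representatives on $\irr{D/\Phi(D)}$) is correct and coincides with the paper's framework from Theorem \ref{generalNavarro}. The easy subcases you dispatch (nonzero fixed characters, trivial action, many orbits) are also fine. But the proof has a genuine gap exactly where you admit ``the real difficulty'' lies: the case $\irr V^E=0$ with $E$ acting with at most $p-1$ nontrivial orbits. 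There you offer only a plan, and the plan does not work as stated. Hering's theorem tells you which groups $E$ can act transitively on $V\setminus\{0\}$, but it says nothing about the quantities $|\irr{G|\theta_\lambda}|$: these are counts of projective (Clifford-theoretic) characters of $E$ and of the point stabilizers, and they can legitimately be as small as $1$ (full ramification). Knowing the isomorphism type of $E$ does not rule out the dangerous configurations; what rules them out is an arithmetic consequence of full ramification that your argument never extracts. Likewise, the remark about sharply transitive groups having cyclic or quaternion Sylow subgroups is misapplied: $E$ transitive need not be sharply transitive, and the condition $V^E=0$ is much weaker than fixed-point-freeness, so that structural fact is not available.

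For comparison, the paper closes this case with two different, prime-specific arguments, neither of which appears in your proposal. For $p=2$, if $|\irrz B^{\sigma_1}|=2$ then both $\theta$ and $\theta_{\lambda_1}$ are \emph{fully ramified} over their inertia groups, so $|G:CD|=e^2$ and $|G_{\theta_{\lambda_1}}:CD|=e_1^2$ are perfect squares; transitivity then forces $2^n-1=|G:G_{\lambda_1}|=(e/e_1)^2=f^2$, and $f^2+1\equiv 2 \pmod 8$ kills every $n\geq 2$. Note this uses the square shape of the indices, not the group structure of $E$. For $p=3$ the counting is genuinely insufficient (the paper says so explicitly), and instead one first reduces to $\Phi(D)=1$ by induction, using block domination \cite[Theorem 9.9]{Nav98} together with Lemma \ref{divisible_by_p} to see the dominated block $\overline B$ again satisfies $|\irrz{\overline B}^{\sigma_1}|=3$; once $D$ is elementary abelian, $\irrz B^{\sigma_1}=\irr B$, and the conclusion $|D|=3$ is imported from the deep classification result \cite[Proposition 15.2]{Sam14} on $3$-blocks with $k(B)=3$. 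Your ``short explicit verification'' would in effect have to reprove a case of that result: for instance, $E\cong {\sf C}_4$ acting fixed-point-freely on $\FF_3^2$ produces two orbits of length $4$ and passes all your orbit-counting tests, and is only excluded because an invariant character cannot be fully ramified under a cyclic quotient --- a projective-character obstruction your outline has no mechanism to detect. So the forward direction, which is the entire content of the theorem beyond Theorem \ref{thmbound}, remains unproved in your proposal.
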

 
\begin{proof}
By Lemma \ref{thmbound} we know that the ``if" implication holds. We now assume that
$|\irrz B^{\sigma_1}|=p$ and we work to show that $D$ is cyclic. 

\smallskip

 Write $C=\cent G D$ and let $\theta \in \irr{CD}$ be the canonical character of $B$. Let $\{ \lambda_i\}_{i=1}^t$ be a complete set of representatives of the $G/CD$-orbits on $\irr {D/\Phi(D)}\setminus \{ 1_D\}$. By Lemma \ref{generalNavarro} we may assume that $G_\theta=G$ and $$\irrz B^{\sigma_1}=\bigcup_{i=1}^t\irr{G|\theta_{\lambda_i}} \, $$
 is a disjoint union. If $p=2$, then $$2=|\irrz B^{\sigma_1}|=|\irr{G|\theta}|+\sum_{i=1}^t |\irr{G|\theta_{\lambda_i}}| \,.$$
Since $D$ is nontrivial by hypothesis, we have that $t \geq 1$. Thus $t=1$ and the characters
$\theta$ and $\theta_{\lambda_1}$ are fully ramified with respect to their inertia subgroups. In particular, there are positive integers $e$ and $e_1$ such that
$|G:C|=e^2$ and $|G_{\theta_{\lambda_1}}:C|=e_1^2$. Suppose that $|D|=2^n$. Since $G/CD$ acts transitively on the nontrivial elements of $D/\Phi(D)$, we have that
$2^n-1=|G:G_{\lambda_1}|=(\frac{e}{e_1})^2=f^2\, .$
The equality $f^2+1=2^n$ forces $f$ to be odd, then $f^2\equiv 1 \mod 8$, and so $f^2+1\equiv 2 \mod 8$ leaves as the only possibility $n=1=f$, that is, $D={\sf C}_2$, as wanted. 
These techniques do not totally suffice to prove the case where $p=3$.
We first need to show that we may assume $\Phi(D)=1$. Indeed, write $\overline{G}=G/\Phi(D)$, $\overline{D}=D/\Phi(D)$ and let $\overline{B}$ be a block of $\overline{G}$ contained in $B$ such that $\overline{D}$ is the defect group of $\overline{B}$ by \cite[Theorem 9.9]{Nav98}. Then  ${\rm Irr}_0(\overline{B})^{\sigma_1}\subseteq{\rm Irr}_0(B)^{\sigma_1}$. By Lemma \ref{generalNavarro} we have that ${\rm Irr}_0(\overline{B})^{\sigma_1}={\rm Irr}(\overline{B})$ is non-empty.  Hence by Lemma \ref{divisible_by_p}, we have that $p$ divides $|{\rm Irr}_0(\overline{B})^{\sigma_1}|\leq|{\rm Irr}_0(B)^{\sigma_1}|=p$, that forces
 $|{\rm Irr}_0(\overline{B})^{\sigma_1}|=p$. If $\Phi(D)\neq 1$ we can apply induction to obtain that $\overline{D}$ is cyclic, and thus $D$ is cyclic. Hence we may assume that $\Phi(D)=1$.
Since $D$ is $p$-elementary abelian, then
$\irrz B^{\sigma_1}=\irr B$ by  the description of these sets in Equation (\ref{IrrB}) and Lemma \ref{generalNavarro}. By \cite[Proposition 15.2]{Sam14}, if $p=3$ then  $|{\rm Irr}(B)|=p$ implies $|D|=p$ and the proof is finished.
 \end{proof}


\section{Reducing to simple groups}\label{reductions}

The aim of this section is to reduce the statement of Theorem \ref{thmA} to a problem on simple groups that we will solve in Section \ref{simplegroups}.

\subsection{Preliminaries}
We start these preliminaries with results concerning the action of Galois automorphisms on characters belonging to principal blocks. 
Recall that $\chi \in \irr{B_0(G)}$ if, and only if, $$\sum_{x \in G^0} \chi(x)\neq 0 \, ,$$where $G^0$ is the subset of elements of $G$ of order not divisible by $p$. 
Some properties of characters in the principal block are listed below. 
\begin{lem}\label{basicsprincipalblock} Let $G$ be a finite group, and let $N\nor G$.
\begin{enumerate}[{\rm (a)}] 
\item We have that $\irr{B_0(G/N)}\sbs \irr{B_0(G)}$, with equality whenever $N$ is a $p'$-group.
\item If $H_i$ are finite groups and $\gamma_i\in \irr{B_0(H_i)}$, for $i=1, \ldots, t$, then $\gamma_1\times \cdots \times \gamma_t\in \irr{B_0(H_1\times \cdots \times H_t)}$.
\end{enumerate}
\end{lem}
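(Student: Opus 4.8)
The unifying tool is the stated characterization of the principal block: for a finite group $H$ and $\psi\in\irr H$ one has $\psi\in\irr{B_0(H)}$ if and only if $S_H(\psi)\neq 0$, where I write $S_H(\psi):=\sum_{x\in H^0}\psi(x)$. Since this is an equivalence, $S_H(\chi)=0$ for every $\chi\in\irr H\setminus\irr{B_0(H)}$, so $S_H$ extends to an additive functional on characters of $H$ that detects the principal-block part. I would dispatch (b) first: writing $H=H_1\times\cdots\times H_t$ and $\gamma=\gamma_1\times\cdots\times\gamma_t$, an element of $H$ is $p$-regular exactly when each coordinate is, so $H^0=H_1^0\times\cdots\times H_t^0$, and evaluating the product character gives $S_H(\gamma)=\prod_{i=1}^t S_{H_i}(\gamma_i)$. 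Each factor is nonzero because $\gamma_i\in\irr{B_0(H_i)}$, whence $S_H(\gamma)\neq 0$; as $\gamma$ is irreducible, $\gamma\in\irr{B_0(H)}$. For (a), the inclusion $\irr{B_0(G/N)}\sbs\irr{B_0(G)}$ under inflation is the standard statement that $B_0(G)$ dominates $B_0(G/N)$. For the equality when $N$ is a $p'$-group I would use that every $\chi\in\irr{B_0(G)}$ lies over the trivial character of $\oh{p'}G$ — since $B_0(G)$ covers $B_0(\oh{p'}G)=\{1\}$ — so $N\sbs\oh{p'}G\sbs\ker\chi$; thus $\chi$ is inflated from $G/N$ and the reverse inclusion follows from standard block theory for normal $p'$-subgroups \cite[Ch.~9]{Nav98}.

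The substantive part is (c). I would first reduce to the case that $\theta$ is $G$-invariant. Because $\theta$ extends to $NP$ it is $P$-invariant, so $P\sbs G_\theta$ and $P\in\syl p{G_\theta}$; in particular $p\nmid|G:G_\theta|$. By the Clifford correspondence together with Brauer's third main theorem (so that $B_0(G_\theta)^G=B_0(G)$ and block induction is compatible with character induction from the inertia group), any $\psi\in\irrp p{B_0(G_\theta)}$ over $\theta$ induces to some $\chi=\psi^G\in\irrp p{B_0(G)}$ over $\theta$, the degree staying prime to $p$ because $|G:G_\theta|$ is. As the hypotheses pass to $(G_\theta,N,\theta)$, I may assume $\theta$ is $G$-invariant. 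Next, since $NP/N$ is a $p$-group and the $NP$-invariant character $\theta\in\irr{B_0(N)}$ is covered by a unique block of $NP$ lying over it, that block must be $B_0(NP)$; hence every extension $\widehat\theta\in\irr{NP}$ of $\theta$ lies in $B_0(NP)$, and $\widehat\theta(1)=\theta(1)$ is prime to $p$, so $\widehat\theta\in\irrp p{B_0(NP)}$.

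To locate a character of $B_0(G)$ over $\theta$, I would compute $S_G(\widehat\theta^{\,G})$. Using the induced-character formula, that conjugation preserves $p$-regularity, and that $\widehat\theta$ vanishes off $NP$, a routine interchange of summation yields $S_G(\widehat\theta^{\,G})=|G:NP|\,S_{NP}(\widehat\theta)$, which is nonzero because $\widehat\theta\in\irr{B_0(NP)}$. By additivity of $S_G$ and its vanishing off $B_0(G)$, some irreducible constituent $\chi$ of $\widehat\theta^{\,G}$ lies in $B_0(G)$; and every constituent lies over $\theta$, since $\theta$ is $G$-invariant. This already produces a character of $B_0(G)$ over $\theta$.

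The main obstacle is the remaining requirement that such a constituent have $p'$-degree: the functional $S_G$ detects the block but not the degree, and the obvious count $\sum_\chi\langle\widehat\theta^{\,G},\chi\rangle\,(\chi(1)/\theta(1))=|G:NP|$ runs over all constituents rather than only those in $B_0(G)$, so the two pieces of information do not combine automatically. I expect to resolve this through the Clifford theory of blocks for $N$ (via the twisted group algebra of $\bar G=G/N$): the block of that algebra corresponding to $B_0(G)$ over $\theta$ has defect group $PN/N\in\syl p{\bar G}$ of full defect, matching that $B_0(G)$ has defect group $P$, and a full-defect block always contains a height-zero character, which under the correspondence is precisely a $p'$-degree character of $B_0(G)$ lying over $\theta$. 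Equivalently, because $\theta$ extends to $NP$ the relevant $2$-cocycle on $\bar G$ is trivial on a Sylow $p$-subgroup, so a $p'$-degree projective character exists; the real content is carrying this existence out inside the principal block, which is exactly what the defect-group matching above supplies.
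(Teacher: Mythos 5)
Your parts (a) and (b) are fine, and so are the first three steps of your part (c): the reduction to $G$-invariant $\theta$ (though note that Brauer's third main theorem alone does not justify it, since whether $B_0(G_\theta)^G$ is even defined is exactly the issue; what you need is the standard but nontrivial theorem that the Clifford correspondence over $\theta$ is compatible with block induction), the observation that any extension $\widehat\theta$ lies in $\irrp p{B_0(NP)}$, and the identity $S_G(\widehat\theta^{\,G})=|G:NP|\,S_{NP}(\widehat\theta)\neq 0$. (For comparison: the paper gives no argument at all here, it simply cites \cite[Lemma 2.2]{NT19}.) The genuine gap is precisely the point you flag as ``the main obstacle.'' Your proposed resolution is not a proof: the character-level correspondence $\irr{G|\theta}\leftrightarrow\irr{\CC_\alpha[G/N]}$ is an isomorphism of semisimple complex algebras and carries no $p$-block information whatsoever; the claims that there is a block of the twisted algebra ``corresponding to $B_0(G)$ over $\theta$,'' that its defect group is $PN/N$, and that heights match under the correspondence are exactly the hard content (this is K\"ulshammer--Puig-type block-theoretic Clifford theory, not a formal consequence of anything you set up), and you assert them (``I expect to resolve this,'' ``which is exactly what the defect-group matching above supplies'') rather than prove them. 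As written, the conclusion rests on an unproved claim essentially equivalent to the statement being proved.

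The fix is that your own functional $S$ finishes the job elementarily once you upgrade nonvanishing to a congruence. Fix a maximal ideal $\mathcal{P}$ over $p$ in the algebraic integers, so $\mathcal{P}\cap\ZZ=p\ZZ$. For $\chi\in\irr H$ we have $S_H(\chi)/\chi(1)=\sum_K\omega_\chi(\widehat{K})$, the sum over $p$-regular classes of $H$; since $\omega_\chi(\widehat{K})\bmod\mathcal{P}$ depends only on the block of $\chi$, comparing with $\chi=1_H$ gives $S_H(\chi)\equiv\chi(1)\,|H^0|\pmod{\mathcal{P}}$ for all $\chi\in\irr{B_0(H)}$. Moreover $p\nmid|H^0|$: letting $Q\in\syl p H$ act on $H^0$ by conjugation gives $|H^0|\equiv|(\cent H Q)^0|\pmod p$, and $\cent H Q$ has central Sylow $p$-subgroup $\zent Q$, hence a normal $p$-complement $K$ with $(\cent H Q)^0=K$. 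Now apply this to both sides of your identity: the left side is $\equiv|G:NP|\,\theta(1)\,|(NP)^0|\not\equiv 0\pmod{\mathcal{P}}$, while the right side is $\equiv|G^0|\,\Delta(1)\pmod{\mathcal{P}}$, where $\Delta$ is the sum, with multiplicities, of the constituents of $\widehat\theta^{\,G}$ lying in $B_0(G)$ (the other constituents contribute $0$ exactly). Hence $p\nmid\Delta(1)$, so some constituent of $\widehat\theta^{\,G}$ lies in $B_0(G)$ and has $p'$-degree; it lies over a $G$-conjugate of $\theta$, hence over $\theta$ since $\irr{G|\theta}=\irr{G|\theta^g}$. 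This closes the gap with no twisted group algebras, and it also shows your preliminary reduction to $G$-invariant $\theta$ is unnecessary.
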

\begin{proof} The first part of (a) and (b) follow directly from the definition of principal block \cite[Definition 3.1]{Nav98}. The second part of (a)
is \cite[Theorem 9.9.(c)]{Nav98}.
\end{proof}

We summarize below some results obtained in Section 1, here stated with respect to the principal block. The first part was first observed by G. Navarro (in private communication). 

\begin{lem}\label{Navarroandboundforprincipalblock}
Let $G$ be a finite group and let $P$ be a Sylow $p$-subgroup of $G$. 

\begin{enumerate}[{\rm (a)}]
\item If $P$ is normal in $G$, then $\irrp p {B_0(G)}^{\sigma_1}=\irr{G/\oh{p' } G\Phi(P)}\, .$
\item If $P$ is cyclic, then $1\leq |\irrp p {B_0(G)}^{\sigma_1}|\leq p\, .$
\item  If $P$ is nontrivial and $p \in \{ 2, 3 \}$, then $|\irrp p {B_0(G)}^{\sigma_1}|\neq 0$ is divisible by $p$.
\end{enumerate}
\end{lem}

\begin{proof}
To prove part (a), assume that $P \nor G$.  Then $G$ is $p$-solvable and by Fong's theorem \cite[Theorem 10.20]{Nav98} 
$\irr{B_0(G)}=\irr{G/\oh{p'} G}$. Hence we may assume that $\oh{p'}G =1$ and, in particular, $\cent G P \sbs P$. 
By Lemma \ref{generalNavarro}
$$\irrp p {B_0(G)}^{\sigma_1}=\irrp p {G}^{\sigma_1}=\bigcup _{\lambda \in \irr{P/\Phi(P)}}\irr{G|\lambda}=\irr{G/\Phi(P)} \, .$$
Part (b) is a straightforward application of Lemma \ref{thmbound}. Part (c) is a direct consequence of Lemma \ref{divisible_by_p}.
\end{proof}

Next is a classical result by J. L. Alperin and E. C. Dade.

\begin{thm}\label{isomblocks}
Suppose that $N$ is a normal subgroup of $G$ and $G/N$ is a $p'$-group.
Let $P \in \syl p G$ and assume that $G=N\cent GP$. Then restriction of characters defines
a bijection $\irr{B_0(G)}\to \irr{B_0(N)}$. In particular, $|{\rm Irr}_{p'}(B_0(G))^{\sigma_1}|=|{\rm Irr}_{p'}(B_0(N))^{\sigma_1}|$.
\end{thm}
\begin{proof}
The case where $G/N$ is solvable was proved in \cite[Lemma 1.1]{Alp76}. The general case
in the main result of \cite{Dad77}. The latter statement follows since $\sigma_1$ acts on $\irrp p {B_0(G)}$.
\end{proof}

We will also use the following.
 
 \begin{lem}\label{inducing}
 Suppose that $G$ is a finite group, $P \in {\rm Syl}_p(G)$ and $P\cent GP \le H \le G$.
If $\theta \in \irrp p {B_0(H)}^{\sigma_1}$, then there exists a some $\chi \in \irrp p {B_0(G)}^{\sigma_1}$ 
lying over $\theta$.
 \end{lem}
 \begin{proof}
 Note that $B_0(H)^G=B_0(G)$ by the comments before \cite[Theorem 9.24]{Nav98} and Brauer's third main theorem \cite[Theorem 6.7]{Nav98}. Write
 $$\Psi=\sum_{\chi \in \irr {B_0(G)}}[\theta^G, \chi]\chi,$$
 so that $\Psi$ has $p'$-degree by \cite[Theorem 6.4]{Nav98}. (Note that $\Psi$ is exactly $(\theta^G)_B$ where $B=B_0(G)$ in \cite{Nav98}'s notation.)
 Let $A=\langle \sigma_1\rangle$, where here we view $\sigma_1$ as an element of ${\rm Gal}(\QQ(\xi)/\QQ)$ for $o(\xi)=|G|$. For every $a \in A$ we have that
 $\Psi^a =\Psi$ as $A$ acts on $\irr{B_0(G)}$ and fixes $\theta$. By \cite[Lemma 2.1.(ii)]{NT19} there is some $\chi \in \irrp p {G}^{\sigma_1}$ appearing with $p'$-multiplicity in $\Psi$. The statement now follows since every irreducible constituent of $\Psi$ lies in the principal block and its multiplicity in $\Psi$ is exactly the multiplicity of $\theta$ in its restriction to $H$. 
 \end{proof}

We end the preliminaries with a technical result. 

\begin{lem}\label{invariance_direct_products}
Let $G$ be a finite group and let $N\nor G$ be a direct product of $t$ copies of a simple nonabelian group $S$ transitively permuted by $G$. Let $P\in \syl p G$.
If some $1_S \neq \phi \in \irrp p {B_0(S)}^{\sigma_1}$ is $X$-invariant, where $X \in \syl p {\aut S}$, then there exists some $P$-invariant $1_N\neq \theta \in \irrp p {B_0(N)}^{\sigma_1}$. In particular, if $N$ is a minimal normal subgroup of $G$, then $\theta$ extends to a $\sigma_1$-invariant irreducible character of $PN$. 
\end{lem}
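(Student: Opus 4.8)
The goal is to prove Lemma~\ref{invariance_direct_products}: given $N \nor G$ a direct product of $G$-conjugate copies of a nonabelian simple group $S$, with $P \in \syl p G$, and assuming some nontrivial $\phi \in \irrp p {B_0(S)}^{\sigma_1}$ is invariant under a Sylow $p$-subgroup $X$ of $\aut S$, we must build a $P$-invariant nontrivial $\theta \in \irrp p {B_0(N)}^{\sigma_1}$, and then extend it when $N$ is minimal normal.

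The plan is to produce $\theta$ as a tensor product of $G$-translates of $\phi$ across the simple factors, which is the standard device for fabricating a $P$-invariant character of a permuted direct product. Write $N = S_1 \times \cdots \times S_n$ where $P$ (equivalently $G$) permutes the factors $S_i$ transitively. First I would choose, for each $i$, an isomorphism identifying $S_i$ with $S$, and transport $\phi$ to a character $\phi_i \in \irrp p {B_0(S_i)}^{\sigma_1}$. Then I would set $\theta = \phi_1 \times \cdots \times \phi_n$. By Lemma~\ref{basicsprincipalblock}(b) this lies in $\irrp p {B_0(N)}$ (the degrees multiply, staying prime to $p$, and the product of principal-block characters lies in the principal block), and $\theta$ is clearly nontrivial and $\sigma_1$-fixed since each $\phi_i$ is. The content is in arranging the identifications so that $\theta$ is genuinely $P$-invariant, not merely invariant up to permutation of factors.

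The key step is to exploit the $X$-invariance hypothesis to make $\theta$ stable under the full action of $P$. The orbit of the factors under $P$ is a single $P$-orbit (since $G$ acts transitively and $P$ surjects onto a Sylow $p$-subgroup of the image of $G$ in $\Sym(n)$ modulo the relevant considerations); I would fix $S_1$ and let $P_1 = \Stab_P(S_1)$, which acts on $S_1$ and hence embeds into $\aut S \cong \aut{S_1}$ with $p$-image lying inside a conjugate of $X$. Since $\phi$ is $X$-invariant and $X \in \syl p {\aut S}$, after conjugating the identification of $S_1$ with $S$ appropriately, the induced action of $P_1$ on $\irr{S_1}$ fixes $\phi_1$. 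I would then transport this choice along $P$-orbit representatives: for $g \in P$ carrying $S_1$ to $S_i$, define $\phi_i$ via the restriction of conjugation by $g$, checking that the $X$-invariance at the base point guarantees this is independent of the choice of $g$ up to the stabilizer, so that $\theta$ is well defined and $\theta^g = \theta$ for all $g \in P$. This is precisely the mechanism by which a Sylow-invariant character on one factor lifts to a Sylow-invariant character of the whole product.

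The main obstacle is the well-definedness and $P$-invariance of $\theta$: I must verify that the action of the point-stabilizer $P_1$ on $S_1$ really does land (up to conjugacy) inside the Sylow $p$-subgroup $X$ for which $\phi$ is invariant, which is why the hypothesis is phrased with $X \in \syl p {\aut S}$ rather than an arbitrary $p$-subgroup — any $p$-subgroup acting on $S$ is contained in a conjugate of $X$, and conjugating the identification absorbs this. Once $\theta$ is constructed and shown to be $P$-invariant, the final assertion follows quickly: if $N$ is a minimal normal subgroup of $G$, then $N$ is the unique minimal normal subgroup supporting this action and $N$ has a $p'$-index issue only through $PN/N \cong P$; since $\theta$ is $P$-invariant of $p'$-degree lying in $B_0(N)$, one applies Lemma~\ref{basicsprincipalblock}(c), or more precisely uses that $PN/N$ is a $p$-group acting on the principal block so that $\theta$ extends, to produce the desired $\sigma_1$-invariant extension to $PN$ — here Lemma~\ref{lemma5.1NT19} or a direct extension argument for a $p$-group acting with the character in the principal block guarantees the extension can be taken $\sigma_1$-fixed. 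I expect the bookkeeping of the identifications across factors to be the only genuinely delicate part; everything else is assembling the cited lemmas.
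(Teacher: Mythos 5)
Your core construction is the right one and matches the paper's own mechanism (transport $\phi$ along orbit representatives, with well-definedness coming from invariance under the point stabilizer), but as written it has a genuine hole: you assert that the simple factors form a single $P$-orbit, ``since $G$ acts transitively and $P$ surjects onto a Sylow $p$-subgroup of the image of $G$ in $\Sym(n)$.'' That inference is false. The image of $P$ is indeed a Sylow $p$-subgroup of the transitive image of $G$, but Sylow subgroups of transitive groups are typically intransitive: every $P$-orbit has $p$-power length, so $P$ cannot act transitively unless $n$ is a power of $p$ (take $G$ inducing $\mathfrak{S}_3$ on three factors with $p=2$). Consequently your recipe defines $\phi_i$ only for those factors $S_i$ lying in the $P$-orbit of $S_1$, and $\theta=\phi_1\times\cdots\times\phi_n$ is simply not constructed on the remaining factors. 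The repair is exactly what the paper does: run the transport argument on each $P$-orbit separately --- for each orbit choose a base factor, observe that its stabilizer in $P$ maps (modulo the centralizer) to a $p$-subgroup of $\aut S$, hence into a conjugate of $X$, and form the product of the translates of (a suitable $\aut S$-conjugate of) $\phi$ over that orbit --- and then take the outer product of these orbitwise characters; Lemma \ref{basicsprincipalblock}(b) and \cite[Lemma 2.1]{GRS} give that the result is a $P$-invariant member of $\irrp p{B_0(N)}^{\sigma_1}$.

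The second gap is in the final extension step, where both tools you invoke are the wrong ones. Lemma \ref{basicsprincipalblock}(c) \emph{presupposes} that $\theta$ extends to $NP$ (its conclusion is a character of $B_0(G)$ over $\theta$), so it cannot be what proves extendibility; and Lemma \ref{lemma5.1NT19} applies only when the normal subgroup has $p'$-index, whereas $|PN:N|$ here is a power of $p$, so it says nothing about the $\sigma_1$-invariance of an extension. Moreover, $P$-invariance of a $p'$-degree character does not by itself force it to extend to $PN$. What makes everything work --- and what your write-up never identifies --- is that $N$ is perfect, being a direct product of nonabelian simple groups: hence the determinantal order $o(\theta)$ is trivial, so $(\theta(1)o(\theta),|PN:N|)=1$, and \cite[Corollary 6.28]{Is} yields the canonical extension $\hat\theta\in\irr{PN}$; its $\sigma_1$-invariance then follows from uniqueness, since $\hat\theta$ is the only extension of the $\sigma_1$-fixed character $\theta$ whose determinantal order is prime to $p$. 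This is precisely the paper's closing argument, and it cannot be replaced by the lemmas you cite.
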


\begin{proof}
Let  $1_N\neq \theta \in \irr N$ the character of $N$ corresponding to $\phi\times \cdots \times \phi \in \irrp p {B_0(S)^t}^{\sigma_1}$, then
$\theta \in \irrp p {B_0(N)}^{\sigma_1}$ by Lemma \ref{basicsprincipalblock}(b). By  \cite[Lemma 4.1.(ii)]{NTT07}, we may assume that $\theta$ is $P$-invariant. 

For the second part of the statement, notice that since $PN/N$ is a $p$-group and $N$ is perfect, $\theta$ has a canonical extension $\hat{\theta}\in{\rm Irr}_{p'}(PN)$ by \cite[Corollary 6.28]{Is}. In particular, $\hat \theta$ is $\sigma_1$-invariant.
\end{proof}

\subsection{The reduction}

Here we reduce Theorem \ref{thmA} to a problem on simple groups, which is done in Theorem \ref{pequals3} below. Theorem \ref{Mandi3} collects the properties of simple groups that will be key for performing such reduction. We would like to remark that the conditions in Theorem \ref{Mandi3} related to the conjugation by group automorphisms are not needed in this context, but may be of independent interest. 

\begin{thm}\label{pequals3}
Let $G$ be a finite group of order divisible by $p$ where $p \in \{ 2, 3 \}$. Let $P\in \syl p G$. Then
$$|\irrp p  {B_0(G)}^{\sigma_1}|=p \text{ \ \  if, and only if, \ \ } P \text { is cyclic.}$$
\end{thm}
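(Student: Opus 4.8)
The plan is to dispatch the ``if'' direction immediately and to prove the ``only if'' direction by induction on $|G|$. If $P$ is cyclic, Theorem \ref{Navarroandboundforprincipalblock}(b) gives $1\le|\irrp 3{B_0(G)}^{\sigma_1}|\le 3$ while part (c) forces this number to be nonzero and divisible by $3$ (recall $P\ne 1$), so it equals $3$. For the converse I assume $|\irrp 3{B_0(G)}^{\sigma_1}|=3$. Using Lemma \ref{basicsprincipalblock}(a) I first reduce to $\oh{3'}G=1$, since replacing $G$ by $G/\oh{3'}G$ changes neither $\irr{B_0(G)}$ nor the isomorphism type of $P$. I then pick a minimal normal subgroup $N$, which is now either an elementary abelian $3$-group or a direct product $S_1\times\cdots\times S_t$ of copies of a nonabelian simple $S$ with $3\mid|S|$. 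The basic bookkeeping tool is the inclusion $\irrp 3{B_0(G/N)}^{\sigma_1}\sbs\irrp 3{B_0(G)}^{\sigma_1}$ of Lemma \ref{basicsprincipalblock}(a): when $3\mid|G:N|$ it combines with Theorem \ref{Navarroandboundforprincipalblock}(c) to give $|\irrp 3{B_0(G/N)}^{\sigma_1}|=3$, so that $PN/N$ is cyclic by induction and, crucially, the two sets coincide — whence every character in $\irrp 3{B_0(G)}^{\sigma_1}$ has $N$ in its kernel.

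The case $3\nmid|G:N|$, i.e.\ $P\sbs N$, is clean. If $N$ is an elementary abelian $3$-group then $P=N\nor G$, and Theorem \ref{Navarroandboundforprincipalblock}(a) identifies $\irrp 3{B_0(G)}^{\sigma_1}$ with $\irr{G/\oh{3'}G\,\Phi(P)}$; a group with exactly three irreducible characters is $C_3$ or $\mathsf S_3$, each with Sylow $3$-subgroup $C_3$, so $P/\Phi(P)\cong C_3$ and $P$ is cyclic. If $N$ is nonabelian, Theorem \ref{Mandi3} supplies nontrivial, pairwise non-$\aut S$-conjugate, $\sigma_1$-fixed characters in $\irrp 3{B_0(S)}^{\sigma_1}$; taking (possibly mixed) outer tensor products over the $t$ factors produces, by Lemma \ref{basicsprincipalblock}(b), characters $\theta\in\irrp 3{B_0(N)}^{\sigma_1}$ whose $G$-orbits are distinguished by their multiset of $\aut S$-classes. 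As $|G:N|$ is prime to $3$, Lemma \ref{basicsprincipalblock}(c) lifts each to $B_0(G)$ and Lemma \ref{lemma5.1NT19} makes the lift $\sigma_1$-fixed. Unless $t=1$ and the Sylow $3$-subgroup of $S$ is cyclic — whence $P$ itself is cyclic — this yields at least three nontrivial such characters, which together with $1_G$ contradict $|\irrp 3{B_0(G)}^{\sigma_1}|=3$.

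It remains to treat $3\mid|G:N|$, where $PN/N$ is cyclic and I must show $P$ is cyclic by ruling out, when $P$ is noncyclic, the existence of exactly three $\sigma_1$-fixed characters. If $N$ is elementary abelian I may assume $N\not\sbs\Phi(P)$ (otherwise $P/\Phi(P)$ is a quotient of the cyclic group $P/N$, so $P$ is cyclic); then some $\sigma_1$-invariant linear $\lambda\in\irr{P/\Phi(P)}$ satisfies $N\not\sbs\ker\lambda$. Granting $\cent GP\sbs P$, Lemma \ref{Ntheorem} puts every $3'$-degree constituent of $\lambda^G$ into $B_0(G)$, and Lemma \ref{lemma2.1NT19}(ii) extracts a $\sigma_1$-fixed irreducible constituent $\chi$; since $\lambda_N\ne 1_N$ occurs in $\chi_N$ we get $N\not\sbs\ker\chi$, contradicting that $\irrp 3{B_0(G)}^{\sigma_1}$ kills $N$. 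If $N$ is nonabelian, Theorem \ref{Mandi3} and Lemma \ref{invariance_direct_products} yield a $P$-invariant nontrivial $\theta\in\irrp 3{B_0(N)}^{\sigma_1}$ with a $\sigma_1$-fixed extension to $PN$, and the goal is again a $\sigma_1$-fixed $\chi\in\irrp 3{B_0(G)}$ lying over $\theta$, which would be the forbidden fourth character.

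I expect the nonabelian subcase with $3\mid|G:N|$ to be the main obstacle. For $p=2$ the corresponding step is immediate because a cyclic $PN/N$ forces a normal $2$-complement in $G/N$ (as $2$ is the least prime), so Theorem \ref{Ctheorem} applies at once; for $p=3$ this mechanism is unavailable, and one must produce the $\sigma_1$-fixed character of $B_0(G)$ over $\theta$ while simultaneously controlling Galois-invariance and principal-block membership of an induced character. I would handle this by reducing, through the counting argument above, to the monolithic case $N=\soc(G)$ with $\cent GN=1$, so that $G$ is essentially almost simple; the resulting constraints on $G/N\le\out S\wr\mathsf S_t$ with cyclic Sylow $3$-subgroup — together with Brauer's Theorem \ref{Brauer1} and the precise count in Theorem \ref{Mandi3} — should force the extra character or return matters to the already settled case $P\sbs N$. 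A subsidiary technical point is the verification of $\cent GP\sbs P$ used in the elementary abelian case, which I would deduce from $\oh{3'}G=1$ once the nonabelian minimal normal subgroups have been removed, so that $\soc(G)$ is a $3$-group.
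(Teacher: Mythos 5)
Your ``if'' direction, your kernel-containment bookkeeping (when $3\mid|G:N|$, Lemma \ref{basicsprincipalblock}(a) plus Lemma \ref{divisible_by_p} and induction force $\irrp 3{B_0(G)}^{\sigma_1}=\irrp 3{B_0(G/N)}^{\sigma_1}$, so every such character kills $N$), and your treatment of the case $P\sbs N$ are all sound, and they mirror the paper's Steps 1 and 3. But there is a genuine gap exactly where you predict ``the main obstacle'': the case of a semisimple minimal normal subgroup $N$ with $3\mid|G:N|$ is never proved. Your plan (``reduce to the monolithic case\dots the constraints on $G/N\le\out S\wr\mathsf{S}_t$\dots should force the extra character'') is only a heuristic, and the difficulty it glosses over is not Galois invariance but principal-block membership: even for $G$ almost simple, nothing in your setup shows that a $\sigma_1$-fixed $3'$-degree character of $G$ lying over the $P$-invariant $\theta\in\irrp 3{B_0(N)}^{\sigma_1}$ actually lies in $B_0(G)$. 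The paper's Final Step exists precisely to supply this: it sets $K/N=\oh{3'}{G/N}$ (proper in $G$ because $G/N$ is not $3$-solvable, which is its Step 5, itself proved using Theorem \ref{Ctheorem}), shows $\cent GQ\sbs\bigcap_i\norm G{S_i}\sbs K$ for $Q=P\cap N$, deduces from \cite[Theorems 9.19, 9.26 and Lemma 9.20]{Nav98} that $B_0(G)$ is the \emph{unique} block covering $B_0(K)$, applies Theorem \ref{Ctheorem} inside $PK$ to get $\psi\in\irrp 3{B_0(PK)}^{\sigma_1}$ over $\theta$, and extracts from $\psi^G$ a $\sigma_1$-fixed $3'$-degree constituent via Lemma \ref{lemma2.1NT19}(ii), which lies over $B_0(K)$ and hence in $B_0(G)$. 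Nothing in your proposal substitutes for this argument.

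Relatedly, your elementary abelian case rests on $\cent GP\sbs P$, which you propose to deduce from ``$\soc(G)$ is a $3$-group once the nonabelian minimal normal subgroups have been removed.'' This is doubly problematic. Logically, ``removing'' semisimple minimal normal subgroups means invoking the very case you have not proved, so the two unresolved points feed each other. Mathematically, $\soc(G)$ being a $3$-group together with $\oh{3'}G=1$ does not by itself yield $\cent GP\sbs P$: that inference requires $F^\ast(G)=\oh 3G$, i.e.\ $E(G)=1$, and a quasisimple group such as $3\cdot\mathfrak{A}_6$ has socle of order $3$ while $E(G)=G$, so the layer must be dealt with separately. The paper instead proves $\cent GN=N$ (whence $\cent GP\sbs\cent GN\sbs P$) through its Steps 5--7: $G/N$ is perfect, hence not $3$-solvable; $\zent G=1$; and then Brauer's Theorem \ref{Brauer1}, applied to $G/N$ (which has cyclic Sylow $3$-subgroups by induction), forces $\cent GN/N$ to be a $3'$-group, so $\cent GN=N\times X$ with $X\sbs\oh{3'}G=1$. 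In short, the two points you defer are not technicalities; they are the core of the $p=3$ reduction, and each needs block-theoretic input (Theorem \ref{Ctheorem}, block regularity and covering, Brauer's theorem on cyclic Sylow subgroups) that your sketch does not provide.
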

\begin{proof}
If $P$ is cyclic, then $|\irrp p  {B_0(G)}^{\sigma_1}|=p$ by Lemma \ref{thmbound}.

\smallskip

We assume now that $|\irrp p  {B_0(G)}^{\sigma_1}|=p$ and we work 
to prove that $P$ is cyclic by induction on the order of $G$. 

First, notice that we may assume that $G$ is not simple, by Theorem \ref{Mandi3}(a),
and $\norm G P <G$ by Theorem \ref{thmnormaldefect}.

\smallskip

{\bf Step 1.} \textit{We may assume $\oh{p'} G=1$.}  This follows by Lemma \ref{basicsprincipalblock}(a) and induction.
  
  \smallskip

{\bf Step 2.} \textit{We may assume that $\Oh{p'} G =G$.}
Otherwise, let $M\nor G$ with $|G/M|$ not divisible by $p$ and $G/M>1$ simple. Then $P\sbs M$ and by the Frattini argument $M\norm G P=G$. Hence $M\cent G P\nor G$ and therefore $G=\cent G P M$ or $\cent G P \sbs M$. Suppose $G=M\cent G P$, then restriction defines a bijection ${\rm Irr}_{p'}(B_0(G))^{\sigma_1}\to{\rm Irr}_{p'}(B_0(M))^{\sigma_1}$ by Theorem \ref{isomblocks}. In this case we are done by induction.
Therefore we may assume that $\cent G P \sbs M$. We claim that $B_0(G)$ is the only block of $G$ covering $B_0(M)$. Indeed, let $B$ be a block of $G$ covering $B_0(M)$. By \cite[Theorem 9.26]{Nav98}, we have  that $P$ is a defect group of $B$. By \cite[Lemma 9.20]{Nav98}, $B$ is regular with respect to $M$ and hence by \cite[Theorem 9.19]{Nav98}, $B_0(M)^G=B$. By Brauer's third main theorem we have that $B_0(M)^G=B_0(G)$ and hence $B=B_0(G)$ and the claim is proven. 
In particular, ${\rm Irr}(G/M)\sbs{\rm Irr}_{p'}(B_0(G))^{\sigma_1}$ as every character in $\irr{G/M}$ has $p'$-degree and is $\sigma_1$-invariant (for $G/M$ is a $p'$-group). 
By hypothesis $|\irr{G/M}|\leq p$. As $G/M$ is a nontrivial $p'$-group, we immediately get a contradiction if $p=2$. If $p=3$, then $|\irr{G/M}|\leq 3$ forces
$G/M={\sf C}_2$. Write ${\rm Irr}_{p'}(B_0(G))^{\sigma_1}=\{1,\lambda,\theta\}$ with $M\sbs\ker\lambda$, for instance. Let $\tau\in{\rm Irr}_{p'}(B_0(M))^{\sigma_1}$ be nontrivial by Lemma \ref{divisible_by_p}. Let $\chi\in{\rm Irr}(B_0(G))$ be over $\tau$. Since $|G/M|$ is not divisible by $p$, we have that $\chi\in{\rm Irr}_{p'}(B_0(G))^{\sigma_1}$ by \cite[Lemma 5.1]{NT19}. Thus necessarily $\chi=\theta$. Since $\theta_M$ has at most two irreducible constituents, we have that $|{\rm Irr}_{p'}(B_0(M))^{\sigma_1}|=3$ and we are done by induction in this case. 

\smallskip

{\bf Step 3.} \textit{If $1\neq M\nor G$, then every $\chi \in \irrp p{B_0(G)}^{\sigma_1}$ satisfies $M\sbs \ker \chi$ and $PM/M>1$ is cyclic.}
By Step 2, we have that $p$ divides $|G/M|$ and hence $|\irrp p {B_0(G/M)}^{\sigma_1}|=p$
follows from Lemma \ref{divisible_by_p}. The claim of the step now follows from Lemma \ref{basicsprincipalblock}(a) and by induction.

\smallskip

{\bf Step 4.} \textit{If $1\neq M\nor G$ and $\gamma \in {\rm Irr}_{p'}(B_0(MP))^{\sigma_1}$,
 then there is some $\chi \in {\rm Irr}_{p'}(B_0(G))^{\sigma_1}$ lying over $\gamma$.}
Write $H=MP\cent GP$, so that $MP\nor H$. By Theorem \ref{isomblocks}, restriction defines a bijection ${\rm Irr}_{p'}(B_0(H))^{\sigma_1} \rightarrow {\rm Irr}_{p'}(B_0(MP))^{\sigma_1}$, and hence some $\theta \in {\rm Irr}_{p'}(B_0(H))^{\sigma_1}$ extends $\gamma$. By Lemma \ref{inducing}, the claim of the step follows. 

\smallskip

{\bf Step 5.}  \textit{Let $N$ be a minimal normal subgroup of $G$. We may assume $PN<G$. }
Suppose the contrary.  By Step 1 and the fact that $\norm G P <G$ (so $G$ is not a $p$-group), we have that $N$ is the direct product of $t$ copies of a nonabelian simple group $S$ of order divisible by $p$ (which are transitively permuted by $G$). By Theorem \ref{Mandi3} there exist $1_S\neq\phi\in{\rm Irr}_{p'}(B_0(S))^{\sigma_1}$ $X$-invariant for some $X\in{\rm Syl}_p({\rm Aut}(S))$. By Lemma \ref{invariance_direct_products}, there is some $1_N\neq \theta \in \irrp p{B_0(N)}^{\sigma_1}$ that extends to a $\sigma_1$-invariant character $\chi\in{\rm Irr}(G)$. Since $B_0(G)$ is the only block covering $B_0(N)$ by \cite[Corollary 9.6]{Nav98}, we have that $\chi\in{\rm Irr}_{p'}(B_0(G))^{\sigma_1}$ contradicting Step 3.

\smallskip

 {\bf Final Step.} Since $NP<G$ by Step 5, if $|{\rm Irr}_{p'}(B_0(NP))^{\sigma_1}|=p$, then we are done by induction. Hence we may assume that $|{\rm Irr}_{p'}(B_0(NP))^{\sigma_1}|>p$ by Lemma \ref{divisible_by_p}. By Step 3, we have that $PN/N$ is cyclic, and hence $|{\rm Irr}_{p'}(B_0(PN/N))^{\sigma_1}|=p$. Therefore there exists some $\theta\in{\rm Irr}_{p'}(B_0(NP))^{\sigma_1}$ such that $N\nsubseteq {\rm ker}(\theta)$ (here we are using that $NP/N$ has just one $p$-block). By Step 4, some $\chi\in{\rm Irr}_{p'}(B_0(G))^{\sigma_1}$ lies over $\theta$. In particular $N\nsubseteq \ker \chi$, a contradiction with Step 3.
\end{proof}

\section{Simple groups}\label{simplegroups}

In this Section we prove Theorem \ref{simplegroupsp2}, which will complete the proof of Theorem \ref{thmA}.
\subsection{Some Generalities on Groups of Lie Type}\label{generalities}

Since the groups of Lie type play a large role in what follows, we begin by recalling some essentials about their blocks and characters.

Let $q$ be a power of a prime. When $G=\textbf{G}^F$ is the group of fixed points of a connected reductive algebraic group $\textbf{G}$ defined over $\overline{\mathbb{F}}_q$ under a Steinberg map $F$, the set of irreducible characters $\irr G $ can be written as a disjoint union $\bigsqcup \mathcal{E}(G, s)$ of so-called {rational Lusztig series} corresponding to $G^\ast$-conjugacy classes of semisimple elements $s\in G^\ast$ (i.e. elements of order relatively prime to $q$). Here $G^\ast=(\textbf{G}^\ast)^{F^\ast}$, where $(\textbf{G}^\ast, F^\ast)$ is dual to $(\textbf{G}, F)$.     

With this notation, we record the following lemma, proved in \cite[Lemma 3.4]{ST18a}, which describes the action of $\mathcal{H}$ on the set of rational Lusztig series and will be useful throughout this section. 

\begin{lem}\label{SFTlem3.4}
Let $p$ be a prime and let $s\in G^\ast$ be a semisimple element. Let $f$ and $b$ be integers and let $\sigma\in\mathcal{H}$ be such that $\sigma(\xi)=\xi^{p ^f}$  for all $p'$-roots of unity $\xi$ and $\sigma(\zeta)=\zeta^{b}$ for all $p$-power roots of unity $\zeta$.  If $\chi\in\mathcal{E}(G, s)$, then $\chi^\sigma\in\mathcal{E}(G, s_{p'}^{p^f}s_p^{b})$.
\end{lem}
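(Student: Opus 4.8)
The plan is to reduce the statement to the behavior of Deligne--Lusztig virtual characters under the Galois action, and then translate that behavior through the duality between characters of $\mathbf{T}^F$ and semisimple classes in the dual torus. The central input I would use is the equivariance
$$R_{\mathbf{T}}^{\mathbf{G}}(\theta)^\sigma = R_{\mathbf{T}}^{\mathbf{G}}(\theta^\sigma), \qquad \theta^\sigma := \sigma\circ\theta,$$
valid for every $F$-stable maximal torus $\mathbf{T}$ of $\mathbf{G}$ and every linear character $\theta\in\Irr(\mathbf{T}^F)$. This follows from the Deligne--Lusztig character formula: evaluating $R_{\mathbf{T}}^{\mathbf{G}}(\theta)$ at an element with Jordan decomposition $g=su$ expresses the value as a $\mathbb{Q}$-linear combination, whose coefficients are built from the (rational-integer-valued) Green functions and the rational factor $1/|\mathbf{C}_{\mathbf{G}}^\circ(s)^F|$, of the values $\theta(x^{-1}sx)$. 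Since $\sigma$ fixes $\mathbb{Q}$ and the Green function values, applying $\sigma$ to such a value amounts to applying $\sigma$ to the $\theta$-values, which is exactly the evaluation of $R_{\mathbf{T}}^{\mathbf{G}}(\theta^\sigma)$. I would cite this from standard references rather than reprove it, as it is the genuinely technical ingredient.

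Next I would carry out the duality bookkeeping. Fix an embedding so that $s\in\mathbf{T}^{*F^*}$, and let $\widehat{s}\in\Irr(\mathbf{T}^F)$ be its image under the canonical isomorphism $\mathbf{T}^{*F^*}\cong\Irr(\mathbf{T}^F)$, which is a group isomorphism preserving element orders. Writing $s=s_{p'}s_p$ for the commuting $p'$- and $p$-parts, we obtain $\widehat{s}=\widehat{s_{p'}}\cdot\widehat{s_p}$, where $\widehat{s_{p'}}$ has order prime to $p$ (hence takes values in $p'$-roots of unity) and $\widehat{s_p}$ has $p$-power order (hence takes values in $p$-power roots of unity). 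Applying $\sigma$ valuewise and using $\sigma(\xi)=\xi^{p^f}$ on $p'$-roots, $\sigma(\zeta)=\zeta^{b}$ on $p$-power roots, together with the homomorphism identity $\widehat{t}^{\,k}=\widehat{t^{\,k}}$, gives
$$\widehat{s}^{\,\sigma} = \widehat{s_{p'}}^{\,p^f}\cdot\widehat{s_p}^{\,b} = \widehat{s_{p'}^{\,p^f}}\cdot\widehat{s_p^{\,b}} = \widehat{\,s_{p'}^{\,p^f}s_p^{\,b}\,}.$$

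Finally I would assemble the two pieces. If $\chi\in\mathcal{E}(G,s)$, then $\langle\chi,R_{\mathbf{T}}^{\mathbf{G}}(\widehat{s})\rangle\neq 0$ for a suitable torus with $s\in\mathbf{T}^{*F^*}$. Since a Galois automorphism commutes with complex conjugation and fixes the (rational-integer) inner product of two virtual characters, the equivariance above yields
$$\langle \chi^\sigma,\, R_{\mathbf{T}}^{\mathbf{G}}(\widehat{\,s_{p'}^{\,p^f}s_p^{\,b}\,})\rangle = \langle \chi^\sigma,\, R_{\mathbf{T}}^{\mathbf{G}}(\widehat{s})^\sigma\rangle = \langle \chi,\, R_{\mathbf{T}}^{\mathbf{G}}(\widehat{s})\rangle \neq 0.$$
As $s_{p'}^{p^f}s_p^{b}$ is a power of $s$, it again lies in $\mathbf{T}^{*F^*}$, so $\chi^\sigma$ is a constituent of $R_{\mathbf{T}}^{\mathbf{G}}(\widehat{\,s_{p'}^{\,p^f}s_p^{\,b}\,})$ and therefore $\chi^\sigma\in\mathcal{E}(G,\,s_{p'}^{\,p^f}s_p^{\,b})$, as claimed. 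The only delicate point is the equivariance of $R_{\mathbf{T}}^{\mathbf{G}}$ in the first step; once that is in hand, the remainder is formal manipulation with the duality isomorphism and with inner products, so I expect that first step to be the main obstacle.
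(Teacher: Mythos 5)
Your proof is correct, but note that the paper does not actually prove this lemma: it records it with the remark that it is proved in \cite[Lemma 3.4]{ST18a}, so the paper's own ``proof'' is a citation, and what you have written reconstructs the content of that citation. Your route --- the equivariance $R_{\mathbf{T}}^{\mathbf{G}}(\theta)^\sigma = R_{\mathbf{T}}^{\mathbf{G}}(\theta^\sigma)$ read off from the Deligne--Lusztig character formula (using integrality of the Green functions and rationality of the remaining coefficients), the order-preserving duality isomorphism $\mathbf{T}^{*F^*}\cong\mathrm{Irr}(\mathbf{T}^F)$ to translate $\sigma$ into the map $s\mapsto s_{p'}^{p^f}s_p^{b}$, and invariance of the inner product under simultaneous twisting --- is the standard argument and is essentially the mechanism behind the proof in the cited reference. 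Each step checks out: $\widehat{s_{p'}}$ and $\widehat{s_p}$ take values in $p'$-th and $p$-power roots of unity respectively, since a linear character of order $m$ takes values in $m$-th roots of unity, so indeed $\widehat{s}^{\,\sigma}=\widehat{s_{p'}^{\,p^f}s_p^{\,b}}$; and $s_{p'}^{p^f}s_p^{b}$ lies in $\langle s\rangle\subseteq\mathbf{T}^{*F^*}$, so your final inner-product computation places $\chi^\sigma$ in $\mathcal{E}\bigl(G, s_{p'}^{p^f}s_p^{b}\bigr)$ by the very definition of the rational series (and the partition property of those series makes this the unique series containing $\chi^\sigma$). What the paper's citation buys is brevity; what your argument buys is a self-contained proof whose only external inputs are the character formula, integrality of Green functions, and standard duality --- a reasonable trade given how frequently the paper invokes this lemma.
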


The characters in the series $\mathcal{E}(G, 1)$ are called \emph{unipotent} characters, and there is a bijection $\mathcal{E}(G,s)\rightarrow\mathcal{E}(\cent{G^\ast}{s}, 1)$.  Hence, characters of $\irr G$ may be indexed by pairs $(s, \psi)$, where $s\in G^\ast$ is a semisimple element, up to $G^\ast$-conjugacy, and $\psi\in\irr {\cent{G^\ast}{s}}$ is a unipotent character of ${\cent{G^\ast}{s}}$.  We remark that ${\cent{\bG^\ast}{s}}$ may fail to be connected, in which case unipotent characters of ${\cent{G^\ast}{s}}$ are taken to be those lying over a unipotent character of $({\cent{\bG^\ast}{s}}^\circ)^F$.  In particular, we will denote by $\chi_s$ the the character indexed by $(s, 1_{\cent{G^\ast}{s}})$, which are \emph{semisimple}, and they have degree $|G^\ast:\cent{G^\ast}{s}|_{q'}$.  

Using \cite[Theorem 9.12]{CE04}, it follows that when $p\nmid q$, the set $\mathcal{E}_p(G,1):=\bigcup \mathcal{E}(G, s)$, where $s$ ranges over elements of $p$-power order in $G^\ast$, is a union of $p$-blocks (first shown in \cite{brouemichel}) and that each such block intersects $\mathcal{E}(G,1)$ nontrivially.  Such blocks are called  \emph{unipotent blocks}.

\subsubsection{A General Set-up}\label{sec:setup} We will often be interested in the following situation:

Let $S$ be a simple group such that there exist $\bG$ a simple, simply connected algebraic group over $\overline{\mathbb{F}}_q$ and $F$ a Steinberg morphism satisfying $S=G/\zent{G}$ with $G:=\bG^F$ perfect.  
Let $(\bG^\ast, F^\ast)$ be dual to $(\bG, F)$.

If $\zent{G}$ is trivial, we define $\wt{\bG}:=\bG$.  Otherwise, we further let $\iota\colon\bG\hookrightarrow\wt{\bG}$ be a regular embedding as in \cite[15.1]{CE04} and let $\iota^\ast\colon\wt{\bG}^\ast\rightarrow\bG^\ast$ be the corresponding surjection of dual groups. Write $\wt{G}:=\wt{\bG}^F$, $G^\ast:=(\bG^\ast)^{F^\ast}$, and $\wt{G}^\ast:=(\wt{\bG}^\ast)^{F^\ast}$.  We may then find $F$-stable maximally split tori $\textbf{T}$ and $\wt{\textbf{T}}$ for $\bG$ and $\wt{\bG}$, respectively, such that $\textbf{T}\leq \wt{\textbf{T}}$.  Write $T:=\textbf{T}^F$ and $\wt{T}:=\wt{\textbf{T}}^F$. Then $\zent{\wt{\bG}}$ is connected, $G\nor \wt{G}$, and $\zent{\wt{G}}\cap G=\zent{G}$. We will write $\wt{S}:=\wt{G}/\zent{\wt{G}}$, and note that $\aut S$ is generated by $\wt{S}$ and the graph-field automorphisms.  
Further, the (linear) characters of $\wt{G}/G$ are in bijection with elements of $\zent{\wt{G}^\ast}$, and we have $\wt{\chi}_s\otimes \wh{z}=\wt{\chi}_{sz}$, where $z\in \zent{\wt{G}^\ast}$ corresponds to $\wh{z}\in\irr{\wt{G}/G}$ and for semisimple $s\in\wt{G}^\ast$, $\wt{\chi}_s$ denotes the semisimple character of $\wt{G}$ corresponding to $s$. (See  \cite[13.30]{dignemichel}.)  It will also be useful in what follows to note that if $s\in[\wt{G}^\ast, \wt{G}^\ast]$ is semisimple, then the semisimple character of $\wt{G}$ corresponding to $s$ is trivial on $\zent{\wt{G}}$  by \cite[Lemma 4.4]{navarrotiep13}.

When $q$ is a power of $p$, we note that $\irrp p{B_0(S)}=\irrp p{S}$, which can be seen using \cite[6.14, 6.15, and 6.18]{CE04} and the facts that $p\nmid |\zent{G}|$ and $S$ is a group with a strongly split BN pair as in \cite[2.20]{CE04}.  

In the case of types $A_{n-1}$ and $\tw{2}A_{n-1}$, we have $S$ is $PSL_n^\epsilon(q)$ with $\epsilon\in\{\pm1\}$; $G=SL_n^\epsilon(q)$; $\wt{G}=GL_n^\epsilon(q)$; and $\wt{S}=PGL_n^\epsilon(q)$. Here $\epsilon=1$ means $S=PSL_n(q)$, $\epsilon=-1$ means $S=PSU_n(q)$, and similarly for $G$ and $\wt{G}$.  We use similar notation for other twisted types.  For example, $E^\epsilon_6(q)$ will denote $E_6(q)$ for $\epsilon=+$ and $\tw{2}E_6(q)$ for $\epsilon=-$.


%

\subsection{The Case $p=2$}\label{simples2}

Here we prove Theorem \ref{simplegroupsp2} in the case $p=2$.
%
%
The following, found in 
\cite[Lemma 3.1]{NST18}, will be useful in what follows.

\begin{lem}[Navarro-Sambale-Tiep]\label{nstlem}
Let $G$ be a finite group.
 If $\chi\in\irrp 2{G}$ is real-valued, then $\chi$ belongs to $B_0(G)$.
\end{lem}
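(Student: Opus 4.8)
The plan is to invoke the character-theoretic criterion for membership in the principal block recorded in the paragraph preceding Lemma~\ref{basicsprincipalblock}: a character $\chi\in\irr G$ lies in $B_0(G)$ if and only if $\sum_{x\in G^0}\chi(x)\neq 0$, where $G^0$ is the set of $2$-regular (odd-order) elements of $G$. Thus it suffices to show that this sum is nonzero whenever $\chi$ has odd degree and is real-valued.

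The key observation is that inversion $x\mapsto x^{-1}$ permutes $G^0$ and has the identity as its only fixed point: if $x\in G^0$ satisfies $x=x^{-1}$ then $x^2=1$, which together with $o(x)$ odd forces $x=1$. Hence $G^0\setminus\{1\}$ splits into two-element orbits $\{x,x^{-1}\}$. For each such pair, real-valuedness of $\chi$ gives $\chi(x^{-1})=\overline{\chi(x)}=\chi(x)$, so the pair contributes $\chi(x)+\chi(x^{-1})=2\chi(x)$ to the sum. Since each $\chi(x)$ is an algebraic integer, the total contribution of $G^0\setminus\{1\}$ is $2\alpha$ for some algebraic integer $\alpha$.

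Putting this together, $\sum_{x\in G^0}\chi(x)=\chi(1)+2\alpha$. Now $\chi(1)$ is odd by hypothesis, so the sum cannot vanish: were it $0$, we would have $\chi(1)=-2\alpha$, forcing $\chi(1)/2=-\alpha$ to be simultaneously a rational number and an algebraic integer, hence an ordinary integer, contradicting that $\chi(1)$ is odd. Therefore $\sum_{x\in G^0}\chi(x)\neq 0$ and $\chi\in B_0(G)$. Equivalently, working modulo a prime ideal $\mathcal{P}$ lying over $2$ in the relevant ring of algebraic integers, the sum is $\equiv\chi(1)\not\equiv 0\pmod{\mathcal{P}}$, since $\chi(1)$ is a $2'$-number.

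I expect essentially no serious obstacle here: the argument is short, and the only point requiring care is the final step, where one combines rationality with integrality to rule out the sum being zero. The whole proof hinges on the fortunate fact that, for $p=2$, the inversion pairing on $G^0$ has a single fixed point while real-valuedness collapses each conjugate pair $\{x,x^{-1}\}$ into $2\chi(x)$; this is precisely why the statement is special to the prime $2$.
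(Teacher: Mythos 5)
Your proof is correct. Note, however, that the paper itself does not prove this lemma at all: it simply cites \cite[Lemma 3.1]{NST18}, so the only comparison available is with that reference, and your argument is essentially the standard one given there. Namely, you invoke the principal-block criterion $\sum_{x\in G^0}\chi(x)\neq 0$ (which the paper records in Section 2, so it is legitimately available), observe that inversion pairs off the nontrivial odd-order elements while real-valuedness turns each pair's contribution into $2\chi(x)$, and conclude that the sum is congruent to the odd number $\chi(1)$ modulo $2$ (equivalently, modulo a prime ideal $\mathcal{P}$ over $2$), hence nonzero; the parity step, where rationality plus integrality forces $\chi(1)/2\in\ZZ$, is handled correctly, and this is exactly the point where both the hypothesis of odd degree and the hypothesis of real values are used, so no gap remains.
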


 In particular, note that an odd character degree of $G$ with multiplicity one must necessarily come from a character fixed by all automorphisms and $\mathcal{G}$, which is therefore an $X$-invariant member of $\irrp 2 {B_0(G)}^{\sigma_1}$.


\begin{lem}\label{sporadic}
Let $S$ be a simple sporadic group, alternating group $\mathfrak{A}_n$ with $n\geq 5$, or one of the simple groups $PSL_2(4), PSL_3(2), PSL_3(4),$ $PSU_4(2)$, $PSU_4(3)$, $PSL^\epsilon_6(2)$, $\tw{2}B_2(8)$, $B_3(2)$, $B_3(3)$, $D_4(2)$, $F_4(2)$, $\tw{2}F_4(2)'$, $E_6(2)$, $\tw{2}E_6(2)$, $G_2(2)'$, $G_2(3)$, or $G_2(4)$. Then Theorem \ref{simplegroups2} holds for $S$ and the prime $p=2$.
\end{lem}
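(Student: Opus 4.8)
The plan is to exploit the elementary sufficient conditions for membership in $\irrp 2 {B_0(S)}^{\sigma_1}$ recorded just before the statement. By Lemma \ref{nstlem} every real-valued odd-degree irreducible character lies in $B_0(S)$; since a rational character is in particular real-valued and $\sigma_1$-fixed, any nontrivial rational character of odd degree automatically belongs to $\irrp 2 {B_0(S)}^{\sigma_1}$. Moreover, two such characters that are each $\aut S$-invariant and distinct cannot be $\aut S$-conjugate. Thus for part (a) it suffices to exhibit two distinct nontrivial $\aut S$-invariant rational characters of odd degree, and for part (b) a single $X$-invariant one; by the remark following Lemma \ref{nstlem}, any nontrivial odd degree occurring with multiplicity one in the character table already produces a character fixed by all of $\aut S$, hence by $X$, settling (b) whenever such a degree is present.

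For the sporadic groups and for the finite list of small groups of Lie type in the statement, I would verify these conditions directly from the known character tables, using the \textsc{Atlas} and the GAP character table library together with the stored action of the outer automorphism group. Concretely, for each such $S$ one reads off the odd irreducible degrees, selects those that are rational, determines their $\out S$-orbits, and checks that at least two nontrivial rational odd degrees lie in distinct $\aut S$-orbits (for (a)) and that at least one is $X$-invariant (for (b)); in almost every case an odd degree of multiplicity one is available and disposes of (b) immediately via the remark.

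For the alternating groups $\mathfrak{A}_n$ with $n\geq 5$ I would argue uniformly through the representation theory of $\mathfrak{S}_n$. All irreducible characters $\chi_\lambda$ of $\mathfrak{S}_n$ are rational, and for a partition $\lambda$ that is not self-conjugate the restriction $(\chi_\lambda)_{\mathfrak{A}_n}=(\chi_{\lambda^c})_{\mathfrak{A}_n}$ is irreducible, rational, and invariant under $\mathfrak{S}_n$, hence under $\aut{\mathfrak{A}_n}$ when $n\neq 6$; its degree equals $\chi_\lambda(1)$. So it is enough to produce two non-self-conjugate partitions $\lambda,\mu$ of $n$ with $\chi_\lambda(1)$ and $\chi_\mu(1)$ odd, with $\{\lambda,\lambda^c\}\neq\{\mu,\mu^c\}$, and with neither restriction trivial. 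The existence of enough odd-degree characters is controlled by Macdonald's theorem (the number of odd-degree irreducible characters of $\mathfrak{S}_n$ is a power of $2$ determined by the binary expansion of $n$), and I would pin down an explicit such pair using the hook-length formula and the $2$-core/$2$-quotient combinatorics of the relevant partitions. Since these restrictions are $\aut{\mathfrak{A}_n}$-invariant and distinct, both (a) and (b) follow for $n\neq 6$; the case $\mathfrak{A}_6\cong PSL_2(9)$, whose automorphism group is strictly larger, together with the smallest values of $n$, I would check by hand against the character table.

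The main obstacle is the uniform alternating-group argument: guaranteeing, for every $n\geq 5$, two non-self-conjugate partitions of odd degree lying in different conjugate-pairs and avoiding the trivial character. This is where the $2$-adic control of the odd character degrees of $\mathfrak{S}_n$ must be made explicit, and where a handful of small $n$ and the exceptional outer automorphisms of $\mathfrak{A}_6$ require separate treatment; the sporadic and Lie-type cases, by contrast, amount to a finite and essentially mechanical verification.
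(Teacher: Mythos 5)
Your proposal is correct and follows essentially the same route as the paper: rational odd-degree characters lie in $\irrp 2{B_0(S)}^{\sigma_1}$ via Lemma \ref{nstlem}, the alternating groups are handled by restricting odd-degree (hence rational and $\mathfrak{S}_n$-invariant) characters from $\mathfrak{S}_n$, and the sporadic and listed small Lie-type groups (including $\mathfrak{A}_5$, $\mathfrak{A}_6$) are dispatched by checking the GAP character table library, with multiplicity-one odd degrees handling part (b). The one simplification worth noting is that the ``main obstacle'' you identify dissolves by pure counting: since each irreducible character of $\mathfrak{A}_n$ restricts from at most two characters of $\mathfrak{S}_n$ (a conjugate pair $\{\lambda,\lambda^c\}$), Macdonald's bound of $2^{n_1+\cdots+n_t}\geq 8$ odd-degree characters of $\mathfrak{S}_n$ for $n\geq 7$ immediately gives at least four distinct irreducible restrictions, hence at least three nontrivial $\aut{\mathfrak{A}_n}$-invariant members of $\irrp 2{B_0(\mathfrak{A}_n)}^{\sigma_1}$, so no explicit partitions or $2$-core/$2$-quotient combinatorics are needed.
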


\begin{proof}
For $n\geq 7$, the automorphism group of $\mathfrak{A}_n$ is the symmetric group $\mathfrak{S}_n$.  Recall that every irreducible character of $\mathfrak{S}_n$ is rational-valued and that an odd-degree character of $\mathfrak{S}_n$ must restrict irreducibly to $\mathfrak{A}_n$ since it has index $2$.  In this case, if $n=2^{n_1}+\cdots+2^{n_t}$ with $n_1<n_2<...<n_t$ is the $2$-adic decomposition of $n$, then \cite[Corollary 1.3]{macdonald71} yields that there are $2^{n_1+\cdots+n_t}\geq 8$ odd-degree characters of $\mathfrak{S}_n$, whose restrictions therefore yield at least $3$ nontrivial members of $\irrp 2{B_0(\mathfrak{A}_n)}^{\sigma_1}$ invariant under $\aut {\mathfrak{A}_n}$. For the remaining groups listed, the statement can be seen using \cite{GAP} and the GAP Character Table Library.  In fact, we see that for the sporadic groups other than the Janko groups, there exist at least two nontrivial odd character degrees with multiplicity 1.
\end{proof}


\begin{pro}\label{liecross}
Let $S$ be a simple group of Lie type defined over $\mathbb{F}_q$ with $q$ a power of an odd prime $\ell$. Then Theorem \ref{simplegroups2} holds for $S$ and the prime $p=2$.
\end{pro}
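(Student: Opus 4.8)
The plan is to reduce both parts of Theorem \ref{simplegroups2} to the production of nontrivial odd-degree characters that are real-valued and $\sigma_1$-fixed. Indeed, by Lemma \ref{nstlem} any real-valued character in $\irrp 2 S$ automatically lies in $B_0(S)$, so block membership is free once the degree is odd and the character is real. Moreover, since $q$ is a power of the \emph{odd} prime $\ell$, the irrationalities appearing in the values of $\irr S$ that come from the defining characteristic (Gauss sums and quantities such as $\sqrt{\pm q}$) generate subfields of $\QQ(\zeta_m)$ with $m$ odd; as $\sigma_1$ fixes every odd-order root of unity, any character of $S$ whose field of values lies in such a $\QQ(\zeta_m)$ is $\sigma_1$-fixed. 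Thus the whole problem becomes one of exhibiting enough nontrivial odd-degree characters of $S$ of this kind and controlling their $\aut S$-orbits.

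First I would dispose of part (b), and simultaneously supply one of the two characters needed for part (a), by means of the Steinberg character $\mathrm{St}$. It has degree $q^N$ (with $N$ the number of positive roots), which is odd; it is rational-valued, hence $\sigma_1$-fixed; it is trivial on $\zent G$, hence a genuine character of $S$; it is invariant under all of $\aut S$; and by Lemma \ref{nstlem} it lies in $B_0(S)$. So $\mathrm{St}$ is an $X$-invariant member of $\irrp 2{B_0(S)}^{\sigma_1}$, settling (b), and we may take $\phi_1=\mathrm{St}$ in (a). Because $\mathrm{St}$ is $\aut S$-invariant, any nontrivial $\phi_2\in\irrp 2{B_0(S)}^{\sigma_1}$ with $\phi_2\neq\mathrm{St}$ automatically lies in a different $\aut S$-orbit, so part (a) reduces to the single task of finding a \emph{second} nontrivial odd-degree $\sigma_1$-fixed character in $B_0(S)$.

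For this second character I would split according to the type of $S$. In types of sufficiently large rank one takes $\phi_2$ to be a second nontrivial rational-valued unipotent character of odd degree: unipotent characters of $G$ are trivial on $\zent G$, hence characters of $S$, and lie in $\mathcal{E}(G,1)\sbs\mathcal{E}_2(G,1)$, so the rational ones of odd degree land in $B_0(S)$ by Lemma \ref{nstlem}; existence of such a character is read off from the (hook-length / $q$-analogue) degree polynomials by checking that the $2$-part of the degree is trivial for at least one non-Steinberg unipotent character. In types $A_{n-1}$ and $\tw2A_{n-1}$ I would realize these inside $GL_n^\epsilon(q)$ via the regular embedding of Section \ref{sec:setup}, restricting suitable unipotent (or, where needed, semisimple) characters of $\wt G$ to $G$ and descending to $S$. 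For the low-rank groups with too few unipotent characters (such as $PSL_2(q)$, $PSL_3^\epsilon(q)$, $PSU_3(q)$, $\tw2G_2(q)$), where essentially only $1_S$ and $\mathrm{St}$ are unipotent, I would instead use a semisimple character $\chi_s$, verifying $\sigma_1$-invariance through Lemma \ref{SFTlem3.4} (noting that $\sigma_1$ stabilizes $\mathcal{E}(G,s)$ and fixes $\chi_s$ when $s$ is a suitable real $2$-regular or involutory element), and checking that its degree is odd and that its $\sqrt{\pm q}$-type values remain $\sigma_1$-fixed.

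The main obstacle is precisely this uniform construction of $\phi_2$ across all Lie types and twisted forms: it breaks into (i) a parity analysis of generic degree polynomials guaranteeing odd degree, which is most delicate for rank-one groups and for partitions or types where the relevant generic degree is even; and (ii) the verification of $\sigma_1$-invariance for the non-rational candidates, where the observation that all the defining-characteristic irrationalities lie in \emph{odd} cyclotomic fields (because $q$ is odd) is exactly what rescues $\sigma_1$-fixedness even though complex conjugation could a priori fail. The finitely many small groups for which no second such character exists---among the odd-characteristic cases, $PSU_4(2)$, $PSU_4(3)$, $B_3(3)$, and $G_2(3)$---fall outside this generic argument and are handled separately in Lemma \ref{sporadic} via explicit character-table computations.
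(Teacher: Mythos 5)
Your skeleton is the paper's: take $\phi_1=\mathrm{St}_S$ (odd degree, rational, $\aut S$-invariant, in $B_0(S)$ by Lemma \ref{nstlem}), note that its $\aut S$-invariance reduces part (a) to producing one further nontrivial member of $\irrp 2 {B_0(S)}^{\sigma_1}$, and hunt for that character among odd-degree unipotent or semisimple characters. But the claim you lean on to make $\sigma_1$-invariance come for free is false, and it is load-bearing. For $p=2$, $\sigma_1$ fixes odd-order roots of unity and cubes $2$-power roots of unity, so $\sigma_1(i)=i^3=-i$; consequently only one of the two square roots $\sqrt{\pm\ell}$ lies in an odd cyclotomic field, namely the Gauss sum $\sqrt{(-1)^{(\ell-1)/2}\ell}\in\QQ(\zeta_\ell)$, while the other equals $i$ times it and is \emph{negated} by $\sigma_1$. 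Concretely, $\sigma_1(\sqrt{3})=-\sqrt{3}$, so a real-valued character with values in the real field $\QQ(\sqrt{3})$ need not be $\sigma_1$-fixed, and characters with values in $\QQ(i)$ (which do occur among unipotent characters of exceptional groups) are moved by $\sigma_1$ unless they are actually rational. So ``real-valued of odd degree'' buys block membership via Lemma \ref{nstlem}, but the correct free pass for $\sigma_1$-invariance is ``\emph{rational}-valued of odd degree.'' The paper is careful about exactly this point: it invokes \cite[Lemma 4.4]{SF19} (odd-degree unipotent characters are rational outside Suzuki/Ree types), and for $\tw{2}G_2(q)$ --- precisely the group where the $\sqrt{3}$-irrationalities live, and which your plan routes through the broken field argument --- it instead observes that the odd degree $(q^2-q\sqrt{3}+1)(q^2+q\sqrt{3}+1)$ occurs with multiplicity one, so that character is automatically fixed by all automorphisms and Galois automorphisms.

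The second gap concerns your semisimple characters and the orthogonal types. Lemma \ref{SFTlem3.4} only says that $\sigma_1$ sends $\mathcal{E}(G,s)$ to $\mathcal{E}(G,s_{2'}s_2^{3})$; it stabilizes the series when $s$ is an involution or a real $2$-regular element, but it does not by itself fix $\chi_s$ \emph{within} its series, and your fallback (``its $\sqrt{\pm q}$-type values remain $\sigma_1$-fixed'') rests on the false claim above. The missing ingredient, which the paper supplies, is the Gelfand--Graev argument: Gelfand--Graev characters are $\sigma_1$-fixed because they are induced from linear characters of a Sylow $\ell$-subgroup (whose values are odd-order roots of unity), and \cite[Lemma 3.8]{ST18a} then identifies $\chi_s$ as the unique constituent of such a character inside its series. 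Moreover, for $P\Omega_{2n+1}(q)$ and $P\Omega_{2n}^{\pm}(q)$ the paper does not use unipotent characters at all: it takes $s\in G^\ast$ an involution in the center of a Sylow $2$-subgroup (so the semisimple characters have odd degree), uses Kondratiev's theorem that $G^\ast$ has self-normalizing Sylow $2$-subgroups to identify $\irrp 2 {G}$ with $\irrp 2 {B_0(G)}$, and then runs the Gelfand--Graev argument. Your treatment of these types (``read the $2$-part off the degree polynomials'') is never carried out, so even granting it could be salvaged, the proposal as written does not close: its two verification mechanisms for $\sigma_1$-invariance are, respectively, false and incomplete.
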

\begin{proof}
We may assume that $S$ is not isomorphic to any of the groups in Lemma \ref{sporadic}, so is as in Section \ref{sec:setup}.  In this case, the Steinberg character is rational-valued and $\aut{S}$-invariant, and therefore it suffices to show that there is another nontrivial member of $\irrp 2{B_0(S)}^{\sigma_1}$.  Further, we note that if $S$ is not a Suzuki or Ree group, then unipotent characters of odd degree are rational-valued (see, e.g. \cite[Lemma 4.4]{SF19}).  Hence in these cases, applying Lemma \ref{nstlem}, it suffices to find another nontrivial unipotent character of odd degree, when possible.  By observing the explicit list of unipotent character degrees in \cite[Section 13.9]{carter}, we see that there is at least one other nontrivial odd-degree unipotent character for the exceptional groups $G_2(q), \tw{3}D_4(q), F_4(q), E_6^\epsilon(q), E_7(q),$ and $E_8(q)$.  For $\tw{2}G_2(q)$, we see from the generic character table in \cite{chevie} that there is another odd degree with multiplicity one. 

For the classical groups $A_{n-1}(q), \tw{2}A_{n-1}(q), B_n(q), C_n(q), D_n(q),$ or $\tw{2}D_n(q)$, we know by \cite[Proposition 7.4]{MS16} that all unipotent characters of $G$ with odd degree lie in the principal series, and hence are in bijection with the odd-degree irreducible characters of the Weyl group $W$ of $G$.   In these cases, $W$ contains a quotient isomorphic to $\mathfrak{S}_n$, which has at least $4$ odd-degree characters for $n\geq 4$, again using \cite[Corollary 1.3]{macdonald71}. We also see, for example using the GAP, that there are at least 4 odd-degree characters of $W$ in the case of $B_2$, $B_3$, and $C_3$.  Using the well-known character table for $PSL_2(q)$, we see that all four odd-degree characters are fixed by $\sigma_1$.  Further, in this case, $\irrp 2{S}=\irrp 2 {B_0(S)}$.  We see from part (iii) of the proof of \cite[Theorem 3.3]{NST18} that if $S=PSL^\epsilon_3(q)$, then the Weil character $\zeta_{3,q}^{(q-\epsilon)/2}$ is a member of $\irrp 2{B_0(S)}$ and is real-valued.
\end{proof}

\begin{pro}\label{liedef}
Let $S$ be a simple group of Lie type defined in characteristic $2.$ Then Theorem \ref{simplegroups2} holds for $S$ for the prime $p=2$.
\end{pro}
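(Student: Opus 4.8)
The plan is to reduce the statement entirely to a question about the automorphism action on odd-degree characters, the Galois part being essentially automatic in defining characteristic. By Lemma \ref{sporadic} we may assume $S$ is none of the finitely many small groups listed there, so $S=G/\zent G$ is as in Section \ref{sec:setup} with $q$ even. Recall that in this situation $\irrp 2{B_0(S)}=\irrp 2 S$, and that these are exactly the semisimple characters that are trivial on $\zent G$: every nontrivial unipotent character of a centralizer $\cent{G^\ast}{s}$ has degree divisible by $q$, hence even, so a character indexed by $(s,\psi)$ of odd degree forces $\psi$ to be trivial. I would first show that $\sigma_1$ fixes each such character. By Lemma \ref{SFTlem3.4}, a semisimple $s$ has odd order, so $s_2=1$ and thus $\chi^{\sigma_1}\in\mathcal{E}(G,s)$ whenever $\chi\in\mathcal{E}(G,s)$; since the semisimple character is the unique character of its (minimal) degree in its series—which one sees by passing to $\wt{G}$, whose connected centre guarantees that centralizers of semisimple elements in $\wt{G}^\ast$ are connected—it follows that $\chi^{\sigma_1}=\chi$. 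Hence $\irrp 2{B_0(S)}^{\sigma_1}=\irrp 2{B_0(S)}$, and it remains only to produce the required characters while ignoring $\sigma_1$ altogether.

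For part (a), I would use that the odd-degree characters of $S$ are parametrized by the relevant semisimple classes of $G^\ast$, whose number grows with $q$ and the rank of $\bG$. Once the groups of Lemma \ref{sporadic} are excluded, this number exceeds $|\out S|$; since the action of $\aut S$ on $\irr S$ factors through $\out S$, every orbit has size at most $|\out S|$, so there are at least two $\aut S$-orbits of nontrivial odd-degree characters, and choosing representatives $\phi_1,\phi_2$ settles (a).

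Part (b) is the crux. When $S=\tw2B_2(q)$ is a Suzuki group, $\out S$ is cyclic of odd order, so $X=S$ and any nontrivial odd-degree character (which is automatically $\sigma_1$-fixed by the first paragraph) works. Otherwise $3$ divides $|S|$, and I would fix a semisimple element $s\in G^\ast$ of order $3$ whose $G^\ast$-class is the unique class of its centralizer type, so that it is stable under the graph and field automorphisms, taking $s$ in a split or in a twisted maximal torus according to whether $q\equiv 1$ or $q\equiv 2\pmod 3$; arranging in addition that the $\wt{G}^\ast$-class of $s$ is a single $G^\ast$-class makes the associated semisimple character invariant under the diagonal automorphisms as well. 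The resulting character $\phi$ is then $X$-invariant, nontrivial, of odd degree, and trivial on $\zent G$, as required. The main obstacle is precisely this simultaneous invariance under the full Sylow $2$-subgroup of $\out S$—field, graph, and diagonal automorphisms at once—which must be verified type by type, with the finitely many small groups handled through Lemma \ref{sporadic}. A cleaner alternative for (b), where it applies, is a parity argument: $X/S$ is a $2$-group acting on $\Omega:=\irrp 2 S$ with $1_S\in\Omega^{X}$, so if $|\Omega|$ is even (as happens, for instance, for $PSL_2(q)$, where $|\Omega|=q$) then $|\Omega^{X}|\equiv|\Omega|\equiv 0\pmod 2$ forces a second, necessarily nontrivial, $X$-fixed character.
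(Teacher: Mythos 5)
Your skeleton matches the paper's in outline (exclude the groups of Lemma \ref{sporadic}, use $\irrp 2{B_0(S)}=\irrp 2{S}$, produce semisimple characters, and invoke Lemma \ref{SFTlem3.4}), but there are genuine gaps at each of the three load-bearing points. The most serious is in your first paragraph: the claim that the semisimple character is \emph{the unique} character of its (minimal) degree in $\mathcal{E}(G,s)$ is false once $\zent{G}\neq 1$. For $s\in G^\ast$ whose centralizer in $\bG^\ast$ is disconnected --- for instance $S=PSL_3^\epsilon(q)$ with $3\mid(q-\epsilon)$ and $q\geq 8$ even, which is not covered by Lemma \ref{sporadic} --- the restriction of $\wt{\chi}_s$ from $\wt{G}$ to $G$ splits into several constituents of equal degree, all lying in $\mathcal{E}(G,s)$ and permuted transitively by $\wt{G}$. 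Passing to $\wt{G}$, where centralizers of semisimple elements are connected, only shows that the \emph{sum} of these constituents is $\sigma_1$-fixed; it says nothing about whether $\sigma_1$ permutes them. So the blanket identity $\irrp 2{B_0(S)}^{\sigma_1}=\irrp 2{B_0(S)}$, on which both of your parts (a) and (b) rest, is unproved. It can likely be repaired (e.g.\ by the Gelfand--Graev argument of \cite[Lemma 3.8]{ST18a} that the paper uses in Proposition \ref{liecross}, noting that in characteristic $2$ the linear characters of the unipotent radical have order at most $2$ and hence are $\sigma_1$-fixed), but some such argument is required; the paper instead sidesteps the issue entirely by choosing $s$ not $\wt{G}^\ast$-conjugate to $sz$ for $1\neq z\in\zent{\wt{G}^\ast}$, which guarantees irreducible restriction --- this is exactly why it makes the fussy adjustment with $\delta$ of order at least $5$ when $n=6$.

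The other two gaps: in (a), your counting inference is off by one --- if $|\irrp 2{S}|$ merely exceeds $|\out{S}|$, you get at least two orbits \emph{in total}, one of which may be $\{1_S\}$, so you cannot conclude that there are two orbits of nontrivial characters; you would need $|\irrp 2{S}|>|\out{S}|+1$, and in any case no lower bound on $|\irrp 2{S}|$ (nor on the number of such characters trivial on $\zent{G}$, which is what is actually needed for characters of $S$) is established uniformly over all types and small $q$. The paper does not count at all: it exhibits explicit non-conjugate pairs (eigenvalue patterns $\{\delta,\delta^{-1},1,\ldots,1\}$ versus $\{\delta,\delta,\delta^{-1},\delta^{-1},1,\ldots,1\}$, or distinct degrees read off from \cite{luebeckwebsite} and CHEVIE). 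Finally, part (b) --- which you yourself call the crux --- is only outlined: you explicitly defer the ``type by type'' verification of simultaneous invariance under field, graph, and diagonal automorphisms, and that verification is precisely the content the paper imports from the proof of \cite[Lemma 6.4]{ST18a} and the remark after \cite[Proposition 6.5]{ST18b} (with an order-$3$ element of $\FF_4^\times$ replacing the order-$5$ element when $m=1$). The parity alternative does not close this gap either: it depends on both your unproved $\sigma_1$-claim and on the evenness of $|\irrp 2{S}|$, which you verify only for $PSL_2(q)$.
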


\begin{proof}
Again, we may assume that $S$ is not as in Lemma \ref{sporadic}.  In particular, we may keep the notation as in Section \ref{sec:setup} 
and we have $\irrp 2{B_0(S)}=\irrp 2{S}$.   If $S$ is $\tw{2}B_2(q)$ or $\tw{2}F_4(q)$, then the generic character tables available in CHEVIE yield the result, since $|\out{S}|$ is odd and there are at least two distinct degrees of nontrivial odd-degree characters whose values are fixed by $\sigma_1$.

  Otherwise, we may take the Steinberg endomorphism on $\bG$ to be $F=F_q\circ \tau$, where $F_q$ is the standard Frobenius induced by the map $x\mapsto x^q$ and $\tau$ is some graph automorphism.  Write $\overline{q}:=q^{|\tau|}=2^{2^bm}$ with $m$ odd and let $X\leq \aut{S}$ such that $X/S\in\mathrm{Syl}_2(\out{S})$.  
  

Since $q$ is a power of $2$, we have $\zent{G}=1$ and $\widetilde{G}=G$ unless $S$ is one of $PSL_n^\epsilon(q)$ or $E_6^\epsilon(q)$.  In the latter cases, $G=[\widetilde{G}, \widetilde{G}]$.  
In any case, since $\widetilde{G}/G$ has odd order, we may view $X/S$ as generated by $F_2^m$ and graph automorphisms.

Now, if $m>1$, then the proof of \cite[Lemma 6.4]{ST18a} (and taking into account the remark after \cite[Proposition 6.5]{ST18b}) yields a member of $\irrp 2{S}$ invariant under $X$ which is the restriction to $G$ of a semisimple character of $\widetilde{G}$ trivial on $\zent{G}$.  Since semisimple elements have odd order and $\sigma_1$ fixes odd roots of unity, Lemma \ref{SFTlem3.4} shows that this character is also fixed by $\sigma_1$. If $m=1$, we may similarly obtain an $X$-invariant member of $\irrp 2{S}$ fixed by $\sigma_1$ by arguing as in \cite[Lemma 6.4]{ST18a} and the remark after \cite[Proposition 6.5]{ST18b} but using an element of $\mathbb{F}_{4}^\times$ of order $3$ rather than an element of $\overline{\mathbb{F}}_q^\times$of order $5$.  

  For $S=G_2(q), F_4(q), \tw{3}D_4(q)$, $E_7(q)$, or $E_8(q)$, the list of character degrees at \cite{luebeckwebsite} yields at least one more distinct nontrivial odd character degree, completing the proof in these cases, since by \cite[Theorem 6.8]{malle07}, odd-degree characters are semisimple (recall that we may assume $q\neq 2$ when $S=G_2(q)$ or $F_4(q)$), and hence fixed by $\sigma_1$ using Lemma \ref{SFTlem3.4}. 

Now, in the remaining cases, $S$ is a classical group or $E_6^\epsilon(q)$.  Here $\widetilde{G}^\ast\cong \widetilde{G}$.   In the case $S=PSL_2(q)$ or $PSL_3^\epsilon(q)$, we see that there is at least one more odd-degree character with a different degree that is fixed by $\sigma_1$, using the generic character tables available in \cite{chevie}.  If $\widetilde{G}=GL_n^\epsilon(q), Sp_{n}(q),$ or $\Omega_{n}^\epsilon(q)$ with $n\geq 4$ and $n$ even in the latter two cases, let $s_1$ and $s_2$ be elements of $\widetilde{G}$ with eigenvalues $\{\delta, \delta^{-1}, 1, \ldots, 1\}$ and $\{\delta, \delta, \delta^{-1}, \delta^{-1}, 1,\ldots, 1\}$, respectively, where $1\neq\delta\in\mathbb{F}_{\overline{q}}^\times$.

Then $s_1$ and $s_2$ are not $\aut{S}$-conjugate, and hence the corresponding semisimple characters of $\widetilde{G}$ have odd degree, are not $\aut{S}$-conjugate, and are fixed by $\sigma_1$ by Lemma \ref{SFTlem3.4}.  Further, if $\widetilde{G}=GL_n^\epsilon(q)$, semisimple classes are determined by the eigenvalues, and $\zent{\widetilde{G}}$ is comprised of scalar matrices, so we see for $i=1,2$, $s_i$ is not conjugate to $s_iz$ for any $z\in \zent{\widetilde{G}}$ unless possibly if $n=6$. In this case, we may assume $q\neq 2$ using Lemma \ref{sporadic} and instead take $\delta\in\mathbb{F}_{q^2}^\times$ to have order at least $5$, again yielding $s_i$ is not conjugate to $s_iz$ for any $z\in \zent{\widetilde{G}}$.  In any case, the corresponding semisimple characters therefore restrict irreducibly to $G$ and are trivial on $\zent{\widetilde{G}}$ since $s_i\in[\widetilde{G}, \widetilde{G}]=G$.  Finally, let $S$ be $E_6^\epsilon(q)$ with $q>2$.   Then we may argue analogously to \cite[Proposition 4.3]{GRS} to find elements $s_1$ and $s_2$ in $\widetilde{G}$ with $|\cent{\widetilde{G}}{s_1}|_2\neq |\cent{\widetilde{G}}{s_2}|_2$ such that the corresponding semisimple characters (which again must be fixed by $\sigma_1$) are irreducible on $G$ and trivial on $\zent{\widetilde{G}}$.  (Indeed, we may replace the $\delta$ used there with a $\delta\in\mathbb{F}_{q^2}^\times$ such that $3\nmid |\delta|$).  In all cases, this yields at least one more nontrivial member of $\irrp 2{B_0(S)}^{\sigma_1}$ that is not $\aut{S}$-conjugate to the $X$-invariant one from above.
\end{proof}

Theorem \ref{simplegroups2} for $p=2$ now follows by combining Propositions \ref{liecross} and \ref{liedef} with Lemma \ref{sporadic}.
\subsection{The Case $p=3$}


Here we prove Theorem \ref{Mandi3} in the case $p=3$.   We begin by stating the following classification of simple groups with cyclic Sylow $3$-subgroups.
 
\begin{pro}\label{cyclicsylowclass}
Let $S$ be a finite nonabelian simple group with order divisible by $3$.  Then $S$ has a cyclic Sylow $3$-subgroup if and only if $S$ is one of: the alternating group $\mathfrak{A}_5$; the sporadic simple group $J_1$; $PSL_2(q)$ for $3\nmid q$; or $PSL^\epsilon_3(q)$ for $3\mid (q+\epsilon)$.

\end{pro}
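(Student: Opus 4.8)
The goal is to classify the finite nonabelian simple groups $S$ of order divisible by $3$ that have cyclic Sylow $3$-subgroup. The plan is to run through the classification of finite simple groups (CFSG), treating alternating, sporadic, and Lie type groups separately, and in each family pinning down exactly when a Sylow $3$-subgroup is cyclic. For the alternating groups $\mathfrak{A}_n$, a Sylow $3$-subgroup is cyclic precisely when it has order $3$, i.e. when $n \in \{3,4,5\}$ (so that the $3$-part of $n!$ is $3$); among the nonabelian simple ones this leaves only $\mathfrak{A}_5$ (noting $\mathfrak{A}_3$ is abelian and $\mathfrak{A}_4$ is not simple in the relevant sense). For the $26$ sporadic groups I would simply consult the known Sylow $3$-subgroup structure (e.g. from the ATLAS or the order factorizations), checking which have $3$-part equal to $3$ and cyclic Sylow $3$; this singles out $J_1$ (whose order is $2^3\cdot 3\cdot 5\cdot 7\cdot 11\cdot 19$, so its Sylow $3$-subgroup is $C_3$), while every other sporadic group has $3$-part at least $9$ with noncyclic Sylow $3$.

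The main work is with the groups of Lie type, and here I would split according to the defining characteristic. If $S$ is defined in characteristic $3$, then a Sylow $3$-subgroup is a full Sylow $\ell$-subgroup in the defining characteristic, which is cyclic only for the smallest rank cases; in fact for Lie type groups in characteristic $3$ the Sylow $3$-subgroup is cyclic only when $S = PSL_2(q)$ with $q$ a power of $3$, since all higher-rank groups contain a noncyclic unipotent Sylow subgroup. For $S$ defined in characteristic $\ell \neq 3$, the order of a Sylow $3$-subgroup is governed by the $3$-parts of the relevant cyclotomic polynomial factors $\Phi_d(q)$ appearing in $|S|$, and by the structure theory of Sylow $d$-tori / Sylow subgroups of groups of Lie type in non-defining characteristic. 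The key input is the determination of the order $e = e_3(q)$ (the multiplicative order of $q$ modulo $3$, which is $1$ or $2$) and the analysis of when the Sylow $3$-subgroup — built from the relevant torus factor and the Weyl group contribution — is cyclic. One would verify that cyclicity forces the rank and the factorization to be small, yielding precisely $PSL_2(q)$ (for $3 \nmid q$), $PSL_3(q)$ with $3 \mid (q+1)$, and $PSU_3(q)$ with $3 \mid (q-1)$, with all larger classical and exceptional groups ruled out because their $3$-local structure contains a noncyclic abelian $3$-subgroup.

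In practice I would organize the non-defining-characteristic argument using a uniform criterion: a Sylow $3$-subgroup of $S$ is cyclic if and only if the $3$-part of $|S|$ is concentrated in a single cyclotomic factor that meets the group with multiplicity one in a cyclic torus, and the normalizer structure contributes no further $3$-factor. For the $PSL_3$ and $PSU_3$ cases one checks directly from $|PSL_3^{\epsilon}(q)| = \tfrac{1}{\gcd(3,q-\epsilon)} q^3 (q^2-1)(q^3-\epsilon)$ that when $3 \mid (q+\epsilon)$ the $3$-part sits in $\Phi_2(q)$ (resp. $\Phi_1$) giving a cyclic Sylow $3$, whereas when $3 \mid (q-\epsilon)$ the factor $\gcd(3,q-\epsilon)=3$ removes one $3$ and the remaining $3$-subgroup is again analyzed and found cyclic only in the stated case; the complementary congruence produces a noncyclic homocyclic group of rank $2$. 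For the exceptional groups $G_2, \tw{3}D_4, F_4, E_6^{\pm}, E_7, E_8$ and for classical groups of larger rank, the relevant tori have rank at least $2$ in the appropriate $\Phi_d$, forcing a noncyclic Sylow $3$-subgroup; these can be eliminated case by case from the generic orders, and the small exceptions such as $PSL_2(q)$ always have cyclic Sylow $3$ since the $3$-part divides $q \pm 1$ which contributes a single cyclic factor.

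The hard part will be organizing the non-defining-characteristic Lie type analysis uniformly and avoiding an unwieldy case-by-case enumeration, in particular correctly handling the interaction between the cyclotomic torus factors, the $\gcd$ arising from the center of the simply connected cover (which removes a factor of $3$ in the $PSL_3/PSU_3$ congruences and is exactly what distinguishes the cyclic from the noncyclic congruence), and the Weyl group's contribution to the $3$-part. A clean way to sidestep much of the tedium is to invoke the existing literature classifying simple groups with cyclic (or more generally abelian) Sylow $3$-subgroups and simply cite it, which I expect is what the authors will do; indeed this kind of statement is standard and appears in work on groups with cyclic or metacyclic Sylow subgroups, so the \emph{proof} likely reduces to a reference together with the short alternating and defining-characteristic observations above.
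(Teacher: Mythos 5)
Your treatment of the defining-characteristic case contains a genuine error, and it contradicts the very statement you are trying to prove. You assert that for groups of Lie type in characteristic $3$ the Sylow $3$-subgroup ``is cyclic only when $S=PSL_2(q)$ with $q$ a power of $3$,'' and you never return to rule these groups out, so your classification would end up including them. In fact a Sylow $3$-subgroup of $PSL_2(3^f)$ is the image of the unipotent radical of a Borel subgroup of $SL_2(3^f)$, namely the group of upper unitriangular matrices, which is isomorphic to the additive group of $\mathbb{F}_{3^f}$ and hence elementary abelian of rank $f$. It is cyclic only when $f=1$, and $PSL_2(3)\cong\mathfrak{A}_4$ is not simple. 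Consequently \emph{no} simple group of Lie type in characteristic $3$ has a cyclic Sylow $3$-subgroup, which is precisely why the proposition restricts $PSL_2(q)$ to $3\nmid q$. The paper makes this exact point at the end of its proof: for $3\mid q$ the Sylow $3$-subgroup is identified with the unipotent radical and is noncyclic unless $q=3$, contradicting simplicity. Your statement that a defining-characteristic Sylow subgroup is cyclic ``only for the smallest rank cases'' conflates small rank with small $3$-rank; even in rank one the unipotent group has $3$-rank $f$.

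Beyond this, your cross-characteristic analysis is a plan rather than a proof (``one would verify that cyclicity forces the rank and the factorization to be small''), and carrying it out uniformly, including the Weyl-group contribution to the $3$-part, is exactly the unwieldy work you want to avoid. Your closing instinct is the right one and is what the paper actually does: it quotes the classification of simple groups with \emph{abelian} Sylow $p$-subgroups from \cite{shenzhau}, which for $p=3$ immediately cuts the problem down to a short explicit list ($\mathfrak{A}_n$ with $n<9$, a few sporadic groups, $PSL_2(q)$, $PSL^\pm_n(q)$ with $n=3,4,5$ in the relevant congruences, and $PSp_4(q)$ with $3\nmid q$). It then disposes of the $n=4,5$ linear/unitary cases and of $PSp_4(q)$ by the Carter--Fong/Weir description \cite{carterfong, weir} of Sylow subgroups of classical groups as direct products of Sylow subgroups of at least two lower-rank groups (hence noncyclic), handles the alternating and sporadic groups via the Atlas and the embedding $\mathfrak{A}_6\leq \mathfrak{A}_n$, and constructs cyclic Sylow $3$-subgroups explicitly in the surviving cases. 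The key simplification you miss is that passing through ``abelian'' first eliminates any need to analyze normalizer or Weyl-group contributions: an abelian Sylow subgroup lies inside a torus, and only the torus structure remains to be checked.
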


\begin{proof}
The main result of \cite{shenzhau} yields a classification of simple groups $S$ and primes $p$ such that $S$ has an abelian Sylow $p$-subgroup.  In particular, if $p=3$, then such a simple group must be of the form $\mathfrak{A}_n$ with $n<9$, one of a short list of sporadic simple groups, $PSL_2(q)$, $PSL^\epsilon_n(q)$ for $3\mid(q+\epsilon)$ and $n=3,4,5$, or $PSp_4(q)$ with $3\nmid q$.  

Using the Atlas \cite{atlas} and since $\mathfrak{A}_6$ has a noncyclic Sylow $3$-subgroup and can be viewed as a subgroup of $\mathfrak{A}_n$ for $n\geq 7$, we see that the only simple alternating or sporadic groups with cyclic Sylow $3$-subgroups are $\mathfrak{A}_5$ and the Janko group $J_1$.  The remaining possibilities are of the form $G/\zent{G}$ for $G$ a classical group $SL^\epsilon_n(q)$ with $n<6$, or $Sp_4(q)$.  Further, except in the cases of $PSL^\epsilon_3(q)$ listed in the statement, $|\zent{G}|$ is relatively prime to $3$, and hence $S$ has a cyclic Sylow $3$-subgroup if and only if $G$ does.  Further, for the cases $G=SL^\epsilon_n(q)$ with $n=3,4,5$ under consideration, we may view the Sylow subgroup as a Sylow subgroup of $\wt{G}=GL^\epsilon_n(q)$, since $[\wt{G}:G]$ is not divisible by $3$.

Now, using the description of the Sylow subgroups of classical groups in \cite{carterfong, weir}, we see that the Sylow subgroups of $GL^\epsilon_4(q), GL^\epsilon_5(q)$, and $Sp_{4}(q)$ are direct products of Sylow subgroups of at least two lower-rank groups, and hence the Sylow $3$-subgroup of $G$ is not cyclic.  In the case $PSL^\epsilon_3(q)$ with $3\mid (q+\epsilon)$ or $PSL_2(q)$ with $3\nmid q$, we may explicitly construct a cyclic Sylow $3$-subgroup.  Finally, if $S=PSL_2(q)$ with $3\mid q$, the Sylow $3$-subgroup can be identified with the unipotent radical of $SL_2(q)$, which is not cyclic unless $q=3$, contradicting that $S$ is simple. 
\end{proof}

Our goal in the remainder of this section is to prove the following, from which we obtain Theorem \ref{Mandi3} for $p=3$ as a corollary.

\begin{thm}\label{simples3}
Let $S$ be a nonabelian simple group with order divisible by $3$. 
\begin{enumerate}
\item If $S$ has a cyclic Sylow $3$-subgroup, then there exist $1_S\neq \chi_1, \chi_2 \in\irrp 3{B_0(S)}^{\sigma_1}$ such that $\chi_1$ extends to $\aut{S}$. 
\item If $S$ does not have a cyclic Sylow $3$-subgroup and is not a group of Lie type defined in characteristic $3$, then there exist nontrivial $\chi_1, \chi_2, \chi_3\in\irrp 3{B_0(S)}^{\sigma_1}$ such that $\chi_1$ and $\chi_2$ extend to $\aut S$. In this case, if $S$ is further not one of $\mathfrak{A}_6, \mathfrak{A}_7$, $\tw{2}F_4(2)'$, $PSL_n(q)$ with $n\leq 4$, or $PSp_4(2^{2m+1})$, then there exist nontrivial $\chi_1, \chi_2, \chi_3\in\irrp 3{B_0(S)}^{\sigma_1}$ such that $\chi_i$ each extend to $\aut{S}$.  
\item If $S$ is a group of Lie type in characteristic $3$, then there exist nontrivial $\chi_1, \chi_2, \chi_3\in\irrp 3{B_0(S)}^{\sigma_1}$ that are pairwise not $\aut{S}$-conjugate and such that $\chi_1$ is invariant under $X$, where $X/S\in\mathrm{Syl}_3(\aut S/S)$.  
\end{enumerate}
\end{thm}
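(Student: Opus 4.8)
The plan is to prove Theorem \ref{simples3} by the classification of finite simple groups, separating $S$ into four regimes: alternating groups, sporadic groups (together with the small exceptional Lie-type cases that must be excluded from uniform arguments), groups of Lie type in cross characteristic (defining characteristic $\ell\neq 3$), and groups of Lie type in defining characteristic $3$. The cyclic-Sylow case (part (1)) is governed by Proposition \ref{cyclicsylowclass}, which leaves only $\mathfrak{A}_5$, $J_1$, $PSL_2(q)$ with $3\nmid q$, $PSL_3(q)$ with $3\mid(q+1)$, and $PSU_3(q)$ with $3\mid(q-1)$; all of these have defining characteristic $\neq 3$. In part (1) the \emph{number} of characters is free: by Theorem \ref{thmbound} one has $|\irrp 3{B_0(S)}^{\sigma_1}|=3$, so exactly two nontrivial characters exist automatically and the only content is that one of them extends to $\aut S$. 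The recurring workhorse throughout the cross-characteristic regime is the Steinberg character $\mathrm{St}_S$, which is rational-valued (hence $\sigma_1$-fixed), $\aut S$-invariant (hence extends to $\aut S$), unipotent, and of degree a power of $\ell$ and so of $3'$-degree. Thus for $\mathfrak{A}_5$ and the rank-one/$PSL_3^{\pm}$ cases one takes $\chi_1=\mathrm{St}_S$ and quotes the generic character tables in \cite{chevie} for the second nontrivial member; $J_1$ is settled directly from \cite{atlas} as in Lemma \ref{sporadic}.

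For the cross-characteristic non-cyclic groups (part (2)) I would use two complementary sources. The primary source is unipotent characters of $3'$-degree lying in $B_0$: these are rational-valued outside the Suzuki/Ree groups (cf.\ \cite[Lemma 4.4]{SF19}), hence $\sigma_1$-fixed, and the $\aut S$-invariant ones among them extend to $\aut S$ by the known extendibility of unipotent characters. Reading off the explicit unipotent degrees from \cite[Section 13.9]{carter} and the data in \cite{luebeckwebsite} furnishes the required count for the exceptional and higher-rank classical groups, exactly as in the $p=2$ arguments of Propositions \ref{liecross} and \ref{liedef}. The supplementary source is semisimple characters attached to $3$-elements: choosing $s\in G^\ast$ of order $3$ lying in the center of a Sylow $3$-subgroup of $G^\ast$ makes $\cent{G^\ast}{s}$ contain a full Sylow $3$-subgroup, so $\chi_s$ has $3'$-degree, and since $s$ is a $3$-element $\chi_s$ lies in a unipotent block. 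Crucially, Lemma \ref{SFTlem3.4} with $b=p+1=4$ gives $\chi_s^{\sigma_1}\in\mathcal{E}(G,s^{4})$, and $s^3=1$ forces $s^4=s$, so $\chi_s$ is $\sigma_1$-fixed. Passing from $\wt{G}$ to $G$ to $S$ and controlling triviality on $\zent{G}$ (via $s\in[\wt{G}^\ast,\wt{G}^\ast]$) and non-$\aut S$-conjugacy (via distinct centralizer orders or eigenvalue multisets) proceeds as in Section \ref{sec:setup} and Proposition \ref{liedef}. The genuine exceptions $\mathfrak{A}_6,\mathfrak{A}_7,\tw{2}F_4(2)'$, $PSL_n(q)$ with $n\leq 4$, and $PSp_4(2^{2m+1})$ — where not all three $\chi_i$ can be forced to extend — are treated individually using \cite{GAP} and the generic tables, yielding only the weaker sub-claim.

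For defining characteristic $3$ (part (3)) the situation is cleaner: one has $\irrp 3{B_0(S)}=\irrp 3{S}$, and every semisimple element has order prime to $q=3^a$, hence $3'$-order, so that (i) every semisimple character of $G$ has $3'$-degree, since its degree is $|G^\ast:\cent{G^\ast}{s}|_{\ell'}$ with $\ell=3$, and (ii) every semisimple character is $\sigma_1$-fixed, because Lemma \ref{SFTlem3.4} together with $\sigma_1$ fixing $3'$-roots of unity gives $\chi_s^{\sigma_1}\in\mathcal{E}(G,s)$ and $\chi_s$ is the unique character indexed by $(s,1)$. It then suffices to exhibit three semisimple classes of $G^\ast$ that are pairwise non-$\aut S$-conjugate, with one class $X$-stable so that the corresponding $\chi_1$ is $X$-invariant; I would choose $s_1$ small and stable under field and graph automorphisms (mirroring \cite[Lemma 6.4]{ST18a} and the remark after \cite[Proposition 6.5]{ST18b}) and $s_2,s_3$ with distinct centralizer orders, all lying in $[\wt{G}^\ast,\wt{G}^\ast]$ so that the characters descend to $S$. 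The rank-one and small twisted cases, notably $PSL_2(3^a)$, $PSU_3$/$PSL_3$ in characteristic $3$, and the Ree groups $\tw{2}G_2(3^{2m+1})$, are finished from the generic character tables in \cite{chevie}; alternating and sporadic groups are handled from \cite{GAP} and \cite{atlas} as in Lemma \ref{sporadic}. Finally, Theorem \ref{Mandi3} follows because distinct $\aut S$-invariant characters are automatically pairwise non-$\aut S$-conjugate, and extendibility to $\aut S$ implies $X$-invariance.

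The hardest part will be the cross-characteristic non-cyclic regime, where four requirements must be met simultaneously: producing enough $3'$-degree characters that genuinely lie in the \emph{principal} block $B_0$ (unlike $p=2$, we do not have $\irrp 3{S}=\irrp 3{B_0(S)}$ in general, so block membership must be verified through the description of unipotent blocks and Brauer's third main theorem rather than assumed), verifying $\sigma_1$-invariance (which is automatic for semisimple characters of order-$3$ elements but not for those attached to elements of order $9$, where $s^4\not\sim s$ in general), securing extendibility of two of the characters to $\aut S$, and guaranteeing non-conjugacy. The low-rank classical groups $PSL_4(q)$ and $PSp_4(q)$ and the small exceptional groups, which are precisely the exceptions listed in part (2), are where the uniform unipotent-character count fails and case-by-case verification is unavoidable.
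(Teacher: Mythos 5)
Your proposal is correct and takes essentially the same route as the paper: the same classification-based case division, with GAP/Atlas and partition combinatorics for the sporadic and alternating groups, $3'$-degree unipotent characters placed in the principal block via Enguehard's and Malle's results (\cite{enguehard00}, \cite{malle07}) together with the Carter/L\"ubeck degree data and Malle's extendibility theorems in cross characteristic, and $\sigma_1$-fixed semisimple characters of $\wt{G}$ selected by eigenvalue, centralizer, and graph-field stability conditions in defining characteristic $3$. The only genuine departure is your part (1) shortcut: the paper constructs both nontrivial characters explicitly (Proposition \ref{typeAcyclic}, via the Steinberg character and a semisimple character attached to an order-$3$ element), whereas you obtain the existence of the second character for free from Theorem \ref{thmbound}; this is legitimate and non-circular, since Theorem \ref{thmbound} is proved in Section \ref{normaldefect} independently of the simple-group statements, though you still must verify, as you acknowledge, that $\mathrm{St}_S$ lies in $B_0(S)$, which requires the same unipotent-block analysis the paper carries out. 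One caveat on wording: ``$\aut{S}$-invariant (hence extends to $\aut{S}$)'' is not a valid inference in general; extendibility of the Steinberg and other unipotent characters should be quoted from \cite[Theorems 2.4 and 2.5]{malle08}, as you correctly do elsewhere in the proposal.
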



We first consider Theorem \ref{simples3} for sporadic and alternating groups, as well as some ``small" groups of Lie type. For two positive integers $n$ and $m$, we will use $n\mid\mid m$ to mean that $n\mid m$ and $\gcd(n, \frac{m}{n})=1$.

\begin{pro}\label{sporadicalt3}
Theorem \ref{simples3} holds for the sporadic simple groups, $G_2(3)$, $\tw{2}F_4(2)'$, $B_3(3)$, $G_2(2)'=PSU_3(3)$, $PSU_4(3)$, and the alternating groups $\mathfrak{A}_n$ with $n\geq 5$. 
\end{pro}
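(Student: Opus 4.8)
The plan is to split the statement into the single infinite family $\mathfrak{A}_n$ and the finite list of remaining groups, and to dispatch the finite list entirely by explicit computation. First I would sort the listed groups according to which clause of Theorem \ref{simples3} applies. By Proposition \ref{cyclicsylowclass}, among the groups named only $\mathfrak{A}_5$ and the Janko group $J_1$ have a cyclic Sylow $3$-subgroup, so these fall under clause (1). The remaining sporadic groups and $\tw{2}F_4(2)'$ are not of Lie type in characteristic $3$ and have a noncyclic Sylow $3$-subgroup, so clause (2) applies, with $\tw{2}F_4(2)'$ lying in the exceptional list where only $\chi_1,\chi_2$ must extend to $\aut S$. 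Finally $G_2(3)$, $B_3(3)$, $G_2(2)'=PSU_3(3)$, and $PSU_4(3)$ are groups of Lie type in defining characteristic $3$, hence subject to clause (3), where I need three characters that are pairwise non-$\aut S$-conjugate with $\chi_1$ invariant under $X$, $X/S\in\mathrm{Syl}_3(\aut S/S)$. For each of these finitely many groups I would read off degrees, fields of values, principal-block membership, and extension/fusion data to $\aut S$ from the GAP Character Table Library and the Atlas, exactly in the spirit of Lemma \ref{sporadic} and Proposition \ref{liedef}. A simplification I would exploit for the characteristic-$3$ groups is the identity $\irrp 3{B_0(S)}=\irrp 3 S$ recorded in Section \ref{sec:setup}, which removes the need to track block membership and leaves only the exhibition of $3'$-degree characters with the right rationality and conjugacy/invariance. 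Since $\sigma_1$ fixes $3'$-roots of unity and acts with $3$-power order, a character is $\sigma_1$-fixed precisely when its field of values has trivial $3$-part, which is immediate to verify.

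For the alternating groups with $n$ large I would give a uniform combinatorial argument. Recall that for $n\neq 6$ one has $\aut{\mathfrak{A}_n}=\mathfrak{S}_n$, that every irreducible character $\chi^\lambda$ of $\mathfrak{S}_n$ is rational, and that $\chi^\lambda$ restricts irreducibly to $\mathfrak{A}_n$ exactly when $\lambda$ is not self-conjugate; in that case the restriction is again rational, hence $\sigma_1$-fixed, and it extends to $\mathfrak{S}_n=\aut{\mathfrak{A}_n}$. Since $[\mathfrak{S}_n:\mathfrak{A}_n]=2$ is prime to $3$, block covering (the principal block of $\mathfrak{S}_n$ covers the principal block of $\mathfrak{A}_n$, e.g.\ \cite[Theorem 9.4]{Nav98}) guarantees that the constituents on $\mathfrak{A}_n$ of a character in $B_0(\mathfrak{S}_n)$ lie in $B_0(\mathfrak{A}_n)$, and the degree is unchanged, so $3'$-degree is preserved. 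Thus the task reduces to producing, for each $n\ge 8$, at least three non-self-conjugate partitions $\lambda$ of $n$, other than $(n)$ and $(1^n)$, whose character has $3'$-degree and lies in the principal $3$-block. Here I would use the hook-length formula together with the $3$-core/$3$-quotient combinatorics: membership of $\chi^\lambda$ in $B_0(\mathfrak{S}_n)$ is governed by the Nakayama conjecture (the $3$-core of $\lambda$ must equal that of $(n)$), while $3\nmid\chi^\lambda(1)$ is controlled by Macdonald's criterion \cite{macdonald71} on the $3$-adic data of $\lambda$.

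The main obstacle I anticipate is precisely this last combinatorial step: exhibiting three such partitions simultaneously for every $n\ge 8$, each non-self-conjugate and in the principal block with $3'$-degree. I would address it by building an explicit small family from the trivial partition $(n)$ via controlled rim $3$-hook moves that preserve the $3$-core (hence the block) while keeping the degree prime to $3$ and breaking self-conjugacy, adjusting the construction according to the residue $n\bmod 3$; counting the $3'$-degree characters in $B_0(\mathfrak{S}_n)$ and discarding the comparatively few arising from self-conjugate partitions should leave at least three survivors for all $n\ge 8$.

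The small cases $\mathfrak{A}_5,\mathfrak{A}_6,\mathfrak{A}_7$ must then be handled by hand, consistently with the exception list: $\mathfrak{A}_5$ sits in clause (1), while $\mathfrak{A}_6$ and $\mathfrak{A}_7$ are exactly the listed exceptions of clause (2), so only two of the three characters are required to extend. For $\mathfrak{A}_6$ I would additionally work with the genuine automorphism group $\aut{\mathfrak{A}_6}$ rather than $\mathfrak{S}_6$ when checking extendibility. These finitely many cases, like the rest of the finite list, I would settle directly from the character tables.
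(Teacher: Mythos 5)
Your proposal follows essentially the same route as the paper: the finite list (together with the small alternating groups) is dispatched by explicit computation in GAP, and for large $\mathfrak{A}_n$ one exhibits three non-self-conjugate partitions whose $\mathfrak{S}_n$-characters have $3'$-degree and lie in the principal $3$-block, using rationality of symmetric-group characters for $\sigma_1$-invariance and extendibility, and the Nakayama/hook-formula combinatorics for block membership and degree. The combinatorial step you flag as the main obstacle is completed in the paper exactly as you sketch it: an explicit table of such partitions, organized by the congruence class of $n$ modulo $3$ (and $9$), valid for all $n>10$.
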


\begin{proof}
Since the result can be seen directly using GAP for the other cases, we may assume $S=\mathfrak{A}_n$ with $n>10$.  In this case, $S$ does not have a cyclic Sylow $3$-subgroup and satisfies $\aut{S}=\mathfrak{S}_n$, where $\mathfrak{S}_n$ denotes the corresponding symmetric group.

The characters of $\mathfrak{S}_n$ are rational-valued and parametrized by partitions of $n$, with their degrees given by the hook formula.  Further, two characters lie in the same $3$-block if and only if they have the same $3$-core.  We also know that $\chi\in\irr{\mathfrak{S}_n}$ corresponding to the partition $\lambda$ restricts irreducibly to $\mathfrak{A}_n$ if and only if the partition is not self-conjugate.  Table \ref{tab:An} lists the partitions and character degrees for three characters in $\irrp 3{\mathfrak{S}_n}$ that restrict irreducibly to $\mathfrak{A}_n$, completing the proof.  \end{proof}

\begin{table}
\caption{Some Members of $\irrp 3 {B_0(\mathfrak{S}_n)}$ Irreducible on $\mathfrak{A}_{n}$, $n> 10$}
\begin{tabular}{|c|c|c|}
\hline
 Condition on $n$ & Partition & $\chi(1)$\\
\hline
$3\mid n$ & $(1, n-1)$ & $n-1$\\
\hline
$3\mid n$ & $(1, 1, n-2)$ & $(n-1)(n-2)/2$\\
\hline
$3\mid\mid n, 3^2\mid (n-2),$ or $3\mid\mid (n-1)$ & $(3,n-3)$ & $n(n-1)(n-5)/6$\\
\hline
$3^2|n, 3\mid\mid(n-2)$, or $3\mid\mid (n-1)$ & $(1^3, n-3)$ & $(n-1)(n-2)(n-3)/6$\\
\hline
$3\mid (n-1)$ & $(2, n-2)$ & $n(n-3)/2$\\
\hline
$3^2\mid (n-1)$ & $(1, 2, n-3)$ & $n(n-2)(n-4)/3$\\
\hline
$3^2\mid (n-1)$ & $(1^3, 2, n-5)$ & $n(n-2)(n-3)(n-4)(n-6)/30$\\
\hline
$3\mid (n-2)$ & $(1^{n-4}, 2, 2)$ & $n(n-3)/2$\\
\hline
$3\mid(n-2)$ & $(1^{n-2}, 2)$ & $n-1$\\
\hline 
\end{tabular}\label{tab:An}
\end{table}

\subsubsection{Lie Type in Cross-characteristic for $p=3$}

In this section, we prove Theorem \ref{simples3} for groups of Lie type in non-defining characteristic.  That is, we deal with the case $S$ is of the form $G/\zent{G}$ for $G$ a finite group of Lie type  of simply connected type defined over a field $\mathbb{F}_q$ with $3\nmid q$. (Given Proposition \ref{sporadicalt3}, this will complete the proof of parts (i) and (ii) of Theorem \ref{simples3}.)

  We will use $\Phi_m$ to denote the $m$th cyclotomic polynomial in the variable $q$.   Note that using e.g. \cite[Lemma 5.2]{malle07}, $3$ divides $\Phi_m$ if and only if $m=3^id$ for some $i\geq 0$, where $d$ is the order of $q$ modulo $3$, and in this case $3\mid\mid\Phi_m$ unless $m=d$.

\begin{pro}\label{lietypecross}
 Let $S$ be a simple group of Lie type defined over $\mathbb{F}_q$ with $3\nmid q$ and assume $S$ is not one of the groups $PSL^\epsilon_n(q)$ with $n\leq 3$. Then there exist three nontrivial characters $\chi_1, \chi_2, \chi_3\in\irrp 3{B_0(S)}^{\sigma_1}$ such that $\chi_1$ and $\chi_2$ extend to $\aut{S}$.
 
 Further, if $S$ is not $PSL^\epsilon_4(q)$ nor $PSp_{4}(2^a)$ with $a$ odd, then $\chi_1, \chi_2,$ and $\chi_3$ may be chosen to extend to $\aut{S}$.
\end{pro}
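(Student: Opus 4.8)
The plan is to work inside the simply connected group $G=\bG^F$ with $S=G/\zent{G}$ as in Section \ref{sec:setup}, and to produce the three characters of $S$ as characters of $G$ of $3'$-degree lying in $B_0(G)$ and trivial on $\zent{G}$ (so that they descend to $S$). Two reservoirs of such characters are available: the unipotent characters in $\mathcal{E}(G,1)$, which always lie in the principal block and are trivial on $\zent{G}$; and semisimple characters $\chi_s$ attached to suitable $3$-elements $s\in G^\ast$. For the Galois side I would use Lemma \ref{SFTlem3.4}: with $p=3$ the automorphism $\sigma_1$ fixes $3'$-roots of unity and raises $3$-power roots of unity to their fourth power, so it sends $\mathcal{E}(G,s)$ to $\mathcal{E}(G,s_{3'}s_3^{4})$; in particular it preserves $\mathcal{E}(G,1)$ setwise and fixes every series $\mathcal{E}(G,s)$ with $s$ of order dividing $3$, since then $s^4=s$.

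The first and principal reservoir is unipotent. A unipotent character is determined inside $\mathcal{E}(G,1)$ by its label, and since $\sigma_1$ fixes $\mathcal{E}(G,1)$ setwise every unipotent character of a non-Suzuki and non-Ree group---being rational-valued---is $\sigma_1$-fixed; I would confirm this directly via Lemma \ref{SFTlem3.4} to sidestep rationality subtleties for the twisted types of order divisible by $3$. Unipotent characters are $\aut{S}$-invariant and extend to $\aut{S}$, so it suffices to exhibit enough of them of $3'$-degree in $B_0$. Here I would split according to $d:=\mathrm{ord}_3(q)\in\{1,2\}$, read off degrees from the explicit list in \cite[Section 13.9]{carter}, and use that $3\mid\Phi_m$ exactly when $m=3^id$ to decide which unipotent degrees are prime to $3$, together with the $d$-Harish--Chandra theory of Cabanes--Enguehard to decide membership in the principal block. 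For all types of sufficiently large rank this yields at least three nontrivial unipotent characters of $3'$-degree in $B_0$, each extending to $\aut{S}$, settling both the main statement and the ``Further'' clause at once.

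It then remains to treat the low-rank families where the unipotent supply runs short, in particular $S=PSL_4^\pm(q)$ and $S=PSp_4(2^a)$ with $a$ odd. For these I would keep two $\aut{S}$-invariant unipotent characters of $3'$-degree in $B_0$ as $\chi_1,\chi_2$ (for instance the Steinberg character together with a graph-stable unipotent character), and construct a third character $\chi_3\in\irrp 3{B_0(S)}^{\sigma_1}$ that need not be $\aut{S}$-invariant. A natural candidate for $\chi_3$ is a unipotent character interchanged with another by the (exceptional) graph automorphism---hence moved by $\aut{S}$ but individually rational and $\sigma_1$-fixed---or else a semisimple character $\chi_s$ for a $3$-element $s$ lying in the centre of a Sylow $3$-subgroup of $G^\ast$, which then has $3'$-degree, lies in a full-defect unipotent block, and is $\sigma_1$-fixed because $s^4$ is conjugate to $s$.

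The main obstacle, and what singles out the exceptional families, is the simultaneous control of two independent constraints: membership in the principal $3$-block, governed by the $d$-Harish--Chandra series and hence by the arithmetic of $q$ modulo $3$; and invariance or non-invariance under $\aut{S}$. The delicate points are verifying the $B_0$-membership of the semisimple characters used for $\chi_3$ without an a priori identity $\irrp 3 G=\irrp 3{B_0(G)}$, and disentangling the action of the graph and graph--field automorphisms on the small set of available $3'$-degree unipotent characters in precisely the families $PSL_4^\pm(q)$ and $PSp_4(2^a)$ with $a$ odd, where these extra automorphisms obstruct a third extendible choice.
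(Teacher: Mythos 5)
Your proposal follows essentially the same route as the paper's proof: the main supply of characters is unipotent characters of $3'$-degree (the Steinberg character plus further ones read off from the degree lists in \cite[Section 13.9]{carter}, with a case split on $d=\mathrm{ord}_3(q)\in\{1,2\}$), extendibility to $\aut S$ comes from Malle's theorems in \cite{malle08}, block membership from the Enguehard--Malle theory, and the two shortfall families are treated by exactly the paper's devices: a graph-swapped unipotent character for $PSp_4(2^a)$ with $a$ odd, and a semisimple character attached to a $3$-element for $PSL_4^\epsilon(q)$ with $q\equiv\epsilon\pmod 3$. However, two of the claims on which you build are false as stated, and one of them leaves a genuine hole. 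The lesser one: unipotent characters do \emph{not} ``always lie in the principal block.'' In type $A^\epsilon_{n-1}$ their distribution into $3$-blocks is governed by $e$-cores, and there even exist unipotent characters of $3$-defect zero (e.g.\ the character of degree $q(q+\epsilon)$ of $SL_3^\epsilon(q)$ when $3\mid(q+\epsilon)$). The correct statement, which is what the paper proves via \cite[Corollary 6.6]{malle07} and \cite[Theorem A]{enguehard00}, is that the unipotent characters of $3'$-\emph{degree} all lie in $B_0$. Since you later invoke $d$-Harish-Chandra theory ``to decide membership in the principal block,'' this reads as a repairable slip rather than a fatal one.

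The genuine gap is your justification that the exhibited unipotent characters are fixed by $\sigma_1$. Rationality of unipotent characters fails well beyond the Suzuki and Ree groups: $G_2[\theta^{\pm1}]$, $F_4[\theta^{\pm1}]$, $F_4[\pm i]$, $E_6[\theta^{\pm1}]$, $E_7[\pm\xi]$, and $E_8[\zeta_5^j]$ all take irrational values; moreover the proposition must also cover $\tw{2}F_4(q)$ (where $3\mid q^2+1$), whose only available nontrivial $3'$-degree unipotent characters beyond the obvious ones are cuspidal and irrational, so for that group your primary argument says nothing at all. Your fallback --- ``confirm this directly via Lemma \ref{SFTlem3.4}'' --- cannot work: that lemma only shows that $\mathcal{E}(G,1)$ is $\sigma_1$-stable \emph{as a set}, and Galois automorphisms genuinely do permute unipotent characters inside a stable series (this is precisely what happens with $\tw{2}B_2[a]$ and $\tw{2}B_2[b]$). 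What is needed, and what the paper supplies, is control of character \emph{fields}: by \cite[Prop.\ 5.6 and Table 1]{geck03} all unipotent character values lie in $\QQ(\sqrt{-1},\zeta_3,\zeta_5,\sqrt q)$, and for $p=3$ with $3\nmid q$ every generator of this field is fixed by $\sigma_1$ (note $\zeta_3\mapsto\zeta_3^4=\zeta_3$, and $\sqrt q$ is a sum of roots of unity of order prime to $3$). Absent that, you would have to check that each character you exhibit is the unique unipotent character of its degree, which your sketch does not do. Finally, for $PSL_4^\epsilon(q)$ with $q\equiv\epsilon\pmod 3$ you correctly flag $B_0$-membership of the semisimple character as delicate but leave it open; the paper closes it by observing that when $e=1$ all unipotent characters of $GL_4^\epsilon(q)$ share the same $1$-core, so $\mathcal{E}_3(\wt{G},1)$ is a single block, forcing $\chi_s$ into $B_0(\wt{G})$.
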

\begin{proof}

We may assume that $S$ is not isomorphic to one of the groups considered in Proposition \ref{sporadicalt3}. Keep the notation and considerations for $G$, $\wt{G}$, $\wt{T}$, and $\wt{S}$ from Section \ref{generalities}.  By the work of Lusztig \cite{lusztig}, the unipotent characters of $\wt{G}$ are trivial on $\zent{\wt{G}}$ and restrict irreducibly to $G$.  Further, when viewed as characters of $\wt{S}$, they are extendible to $\aut S$, by \cite[Theorems 2.4 and 2.5]{malle08}, aside from some specific exceptions. The only unipotent characters which take irrational values occur for exceptional groups and have values in $\mathbb{Q}(\sqrt{-1}, \zeta_3, \zeta_5, \sqrt{q})$, where $\zeta_3$ and $\zeta_5$ are $3$rd and $5$th roots of unity, respectively, by \cite[Prop 5.6 and Table 1]{geck03}. In any case, the unipotent characters are $\sigma_1$-invariant, since $\sqrt{q}$ is a sum of roots of unity of order relatively prime to $3$.

Let $d$ be the order of $q$ modulo $3$. In particular, we have $d=1$ or $2$.  If $d=1$, unipotent characters of degree relatively prime to $3$ are constituents of the Harish-Chandra induced character $R_{\wt{T}}^{\wt{G}}(1)$ using \cite[Corollary 6.6]{malle07}.  Further, by \cite[Theorem A]{enguehard00}, all members of $R_{\wt{T}}^{\wt{G}}(1)$ lie in the same block, namely $B_0(\wt{G})$.  

If $d=2$, 
then the centralizer of a Sylow $d$-torus is a maximal torus, using e.g. \cite[Lemma 3.2]{MS16}.  Unipotent blocks of $\wt{G}$ are parametrized by certain $\wt{G}$-conjugacy classes of pairs $(\wt{L}, \lambda)$ where $\wt{L}$ is a $d$-split Levi subgroup of $\wt{G}$ and $\lambda$ is a $d$-cuspidal unipotent character of $\wt{L}$, by \cite[Theorem A]{enguehard00}.  Further, a unipotent character in the block parametrized by $(\wt{L}, \lambda)$ can have $3'$-degree only when $\wt{L}$ is the centralizer of a Sylow $d$-torus, using \cite[Corollary 6.6]{malle07}.  This yields that again in the case $d=2$, there is a unique block of $\wt{G}$ containing unipotent characters of $3'$-degree. 

Hence when $3\nmid q$, every unipotent character in $\irrp 3{\wt{G}}$ is a member of $\irrp 3{B_0(\wt{G})}^{\sigma_1}$, and restricts to a member of $\irrp 3{B_0(G)}^{\sigma_1}$ trivial on the center.  Then this restriction may be viewed as an element of $\irrp 3{B_0(S)}^{\sigma_1}$, using e.g. \cite[Lemma 17.2]{CE04}.

In particular, since the Steinberg character has degree a power of $q$, it suffices to find two more unipotent characters of $3'$-degree that are not one of the exceptional cases in \cite[Theorem 2.5]{malle08}.  In what follows, we will use the notation and degrees for unipotent characters as in \cite[Sections 13.8 and 13.9]{carter}.  


\vspace{.3cm}

\paragraph{\textbf{Exceptional Types}}\label{crossexcept}
In the case that $S$ is an exceptional group of Lie type defined over $\mathbb{F}_q$ with $3\nmid q$, we list in Table \ref{tab:unipexcep} two unipotent characters invariant under $\aut{S}$ that have degree relatively prime to $3$, completing the proof in this case.   

\begin{table}\small
\begin{center} 
\caption{Some Unipotent Characters in $\irrp 3 {B_0(S)}^{\sigma_1}$ for Exceptional types with $3\nmid q$ }\label{tab:unipexcep}
\begin{tabular}{|c|c|c|c|}
\hline
\multirow{2}{*}{Type} & \multirow{2}{*}{Condition on $d$} & Character  & \multirow{2}{*}{$\chi(1)$}\\
& & (Notation from \cite[13.9]{carter}) & \\
\hline
\multirow{4}{*}{$G_2(q)$} & \multirow{2}{*}{$d=1$} & $\phi_{2,2}$ & $\frac{1}{2}q\Phi_2^2\Phi_6$\\
 & & $\phi_{1,3'}$ & $\frac{1}{3}q\Phi_3\Phi_6$\\
 \cline{2-4}

 & \multirow{2}{*}{$d=2$} & $G_2[1]$ & $\frac{1}{6}q\Phi_1^2\Phi_6$\\
 & & $\phi_{1,3'}$ & $\frac{1}{3}q\Phi_3\Phi_6$\\
\hline
\multirow{2}{*}{$\tw{3}{D_4}(q)$} & \multirow{2}{*}{$d=1,2$} & $\phi_{1,3'}$ & $q\Phi_{12}$ \\
 & & $\phi_{1,3''}$ & $q^7\Phi_{12}$ \\
 \hline
 \multirow{4}{*}{${F_4}(q)$} & \multirow{2}{*}{$d=1$} & $\phi_{4,1}$ & $\frac{1}{2}q\Phi_2^2\Phi_6^2\Phi_8$ \\
 & & $\phi_{8,3'}$ & $q^3\Phi_4^2\Phi_8\Phi_{12}$ \\
 \cline{2-4}
  & \multirow{2}{*}{$d=2$} & $B_{2, \epsilon}$ & $\frac{1}{2}q^{13}\Phi_1^2\Phi_3^2\Phi_8$ \\
 & & $B_{2,1}$ & $\frac{1}{2}q\Phi_1^2\Phi_3^2\Phi_8$ \\
\hline
\multirow{2}{*}{${E_6}(q)$} & \multirow{2}{*}{$d=1, 2$} & $\phi_{20, 2}$ & $q^2\Phi_4\Phi_5\Phi_8\Phi_{12}$ \\
 & & $\phi_{20, 20}$ & $q^{20}\Phi_4\Phi_5\Phi_8\Phi_{12}$ \\
 \hline
 \multirow{2}{*}{${\tw{2}{E}_6}(q)$} & \multirow{2}{*}{$d=1, 2$} & $\phi_{4, 1}$ & $q^2\Phi_4\Phi_8\Phi_{10}\Phi_{12}$ \\
 & & $\phi_{4, 13}$ & $q^{20}\Phi_4\Phi_8\Phi_{10}\Phi_{12}$ \\
 \hline
  \multirow{2}{*}{${{E}_7}(q)$} & \multirow{2}{*}{$d=1, 2$} & $\phi_{7, 1}$ & $q\Phi_7\Phi_{12}\Phi_{14}$ \\
 & & $\phi_{7, 46}$ & $q^{46}\Phi_7\Phi_{12}\Phi_{14}$ \\
\hline
  \multirow{2}{*}{${{E}_8}(q)$} & \multirow{2}{*}{$d=1, 2$} & $\phi_{8, 1}$ & $q\Phi_4^2\Phi_8\Phi_{12}\Phi_{20}\Phi_{24}$ \\
 & & $\phi_{35, 2}$ & $q^{2}\Phi_5\Phi_7\Phi_{10}\Phi_{14}\Phi_{15}\Phi_{20}\Phi_{30}$ \\
 \hline
 {$\tw{2}{B}_2(q)$,} & \multirow{2}{*}{Note: $3\nmid |S|$} & $\tw{2}B_2[a]$ & $\frac{1}{\sqrt{2}}q(q^2-1)$\\
 $q^2=2^{2m+1}$ & &$\tw{2}B_2[b]$ & $\frac{1}{\sqrt{2}}q(q^2-1)$\\
\hline
 {$\tw{2}{F}_4(q)$,} & \multirow{2}{*}{Note: $3\mid(q^2+1)$} & cusp & $\frac{1}{12}q^4\Phi_1^2\Phi_2^2(\Phi_8')^2\Phi_{12}(\Phi_{24}')^2$\\
 $q^2=2^{2m+1}$ & &cusp & $\frac{1}{12}q^4\Phi_1^2\Phi_2^2(\Phi_8'')^2\Phi_{12}(\Phi_{24}'')^2$\\

\hline \hline
\end{tabular}
\end{center}
\end{table}

\vspace{.3cm}

\paragraph{\textbf{Types $A_{n-1}$ and $\tw{2}{A}_{n-1}$, $n\geq 4$}}

In this case, let $S$ be $PSL_n^\epsilon(q)$ with $n\geq 4$ and $\epsilon\in\{\pm1\}$. 
Write $e\in\{1, 2\}$ for the number such that $q\equiv \epsilon e\pmod 3$. That is, $e$ is the order of $\epsilon q$ modulo $3$. Two unipotent characters are in the same $3$-block of $\wt{G}=GL_n^\epsilon(q)$ if and only if they have the same $e$-core (see \cite{FS82}).  For $n\geq 5$ or for $(n,e)=(4,2)$, the unipotent characters described in Table \ref{tab:unipA} are $\aut S$-invariant members of $\irrp 3 {B_0(S)}^{\sigma_1}$. 

\begin{table}\footnotesize
\caption{Some Unipotent Characters in $\irrp 3 {B_0(S)}^{\sigma_1}$ for type $A^\epsilon_{n-1}(q)$ with $n\geq 4$ and $3\nmid q$}\label{tab:unipA}
\begin{tabular}{|c|c|c|c|}
\hline
$n$ & Additional Condition on $n, e$ & Partition & $\chi(1)_{q'}$\\
\hline \hline
\multirow{2}{*}{$n\geq 6$} & $e=2$ and $n$ even; or &  $\left(1, n-1\right)$  & \multirow{2}{*}{$\frac{q^{n-1}-\epsilon^{n-1}}{q-\epsilon}$} \\
& $e=1$ and $3\nmid (n-1)$ & $\left( 1^{n-2}, 2\right)$  & \\
\hline
\multirow{2}{*}{$n\geq 6$}&\multirow{2}{*}{$e|n$ and $3\nmid n$} & $\left(2, n-2\right)$ & \multirow{2}{*}{$\frac{(q^n-\epsilon^n)(q^{n-3}-\epsilon^{n-3})}{(q-\epsilon)(q^2-1)}$} \\
 & & $\left(1^{n-4}, 2, 2\right)$ & \\
 \hline
\multirow{2}{*}{$n\geq 6$} & $3\mid n$; or  & $\left(1, 1, n-2\right)$ &  \multirow{2}{*}{$\frac{(q^{n-1}-\epsilon^{n-1})(q^{n-2}-\epsilon^{n-2})}{(q-\epsilon)(q^2-1)}$} \\
 & $e=2$ and $n$ odd and $3\mid (n-2)$ & $\left(1^{n-3}, 3\right)$ & \\
 \hline
\multirow{2}{*}{$n\geq 6$} & \multirow{2}{*}{$e=2$ and $n$ odd and $3\mid (n-1)$}  & $\left(1, 1, 2, n-4\right)$ &  $\frac{(q^n-\epsilon^n)(q^{n-2}-\epsilon^{n-2})(q^{n-3}-\epsilon^{n-3})(q^{n-5}-\epsilon^{n-5})}{(q-\epsilon)^2(q^2-1)^2(q^2+1)}$ \\
 &  & $\left(2, n-2\right)$ & $\frac{(q^n-\epsilon^n)(q^{n-3}-\epsilon^{n-3})}{(q-\epsilon)(q^2-1)}$\\
 \hline
 \multirow{2}{*}{$n=5$} & \multirow{2}{*}{$e=1$} & $\left(1, 4\right)$ & \multirow{2}{*}{$(q+\epsilon)(q^2+1)$}\\
 & & $\left(1, 1, 1, 2\right)$ & \\
 \hline
  \multirow{2}{*}{$n=5$} & \multirow{2}{*}{$e=2$} & $\left(2, 3\right)$ & {$\frac{q^5-\epsilon}{q-\epsilon}$}\\
 & & $\left(1, 1, 3\right)$ & $(q^2+1)(q^2+\epsilon q+1)$\\
\hline
\multirow{2}{*}{$n=4$} & \multirow{2}{*}{$e=2$} & $\left(1,3\right)$ & \multirow{2}{*}{$q^2+\epsilon q+1$} \\
& & $\left(1,1,2\right)$ & \\
\hline
$n=4$ & $e=1$ & $\left(2,2\right)$ & $q^2+1$ \\
\hline
\end{tabular}
\end{table}

Now assume $n=4$ and $e=1$.  Then the unipotent character in the last line of Table 3 of the ArXiv version of this paper is an $\aut S$-invariant member of $\irrp 3 {B_0(S)}^{\sigma_1}$.  In this case, $1_S, \mathrm{St}_S$, and the character listed are the only unipotent characters in $\irrp 3{S}$. However, since $e=1$, we see that every unipotent character is a member of the principal block of $\wt{G}$, which means that $\mathcal{E}_3(\wt{G}, 1)$ is comprised of only one block. 
Let $\zeta\in\mathbb{F}_{q^2}^\times$ with order $3$.  Then taking $s$ to be the element $\diag(\zeta, \zeta, \zeta, 1)$ of $\wt{G}^\ast\cong GL_4^\epsilon(q)$, the semisimple character $\chi_s \in \mathcal{E}(\wt{G}, s)$ lies in the principal block of $\wt{G}$ and is trivial on $\zent{\wt{G}}$ since $s\in SL_4^\epsilon(q)\cong [\wt{G}^\ast, \wt{G}^\ast]$.  Further, we see using Lemma \ref{SFTlem3.4} that $\chi_s$ is fixed by $\sigma_1$.  

Since $\cent{\wt{G}^\ast}{s}\cong GL^\epsilon_1(q)\times GL^\epsilon_3(q)$, we see $\chi_s(1)=(q+\epsilon)(q^2+1)$. Further, since the semisimple classes of $\wt{G}$ are determined by their eigenvalues and  $\zent{\wt{G}}$ is comprised of scalar matrices, we see that $s$ is not conjugate to $sz$ for any nontrivial $z\in \zent{\wt{G}}$.  Hence $\chi_s|_G$ is irreducible, by the second-to-last paragraph of Section \ref{generalities}, and is therefore a member of $\irrp 3 {B_0(G)}$, since the principal block of $G$ is the only block covered by the principal block of $\wt{G}$.  But since $\zent{G}\leq \zent{\wt{G}}$ is in the kernel of $\chi_s$, this character is therefore a member of   $\irrp 3 {B_0(G/\zent{G})}^{\sigma_1}=\irrp 3 {B_0(S)}^{\sigma_1}$, again using \cite[Lemma 17.2]{CE04}. Note that this character is not $\aut{S}$-conjugate to $1_S, \mathrm{St}_S$, nor the unipotent character labeled by $(2,2)$, which completes the proof for $S=PSL_n^\epsilon(q)$ with $n\geq 4$.
 
\vspace{.3cm}

\paragraph{\textbf{Types $B_n$ and $C_n$, $n\geq 2$}}\label{crossBC}

When $S$ is type $B_n$ or $C_n$ with $n\geq 2$ defined in characteristic different than $3$, Table \ref{tab:unipBC} exhibits at least two distinct unipotent characters in $\irrp 3{B_0(S)}^{\sigma_1}$ that are $\aut{S}$-invariant, with the exception of the case $S=PSp_4(2^{a})$ with $a$ odd.  In the latter situation, we may instead consider the characters indexed by $ 1 \hbox{ } 2 \choose 0 $ and $ 0 \hbox{ } 1 \hbox{ } 2 \choose  \emptyset $  with degrees $\frac{q}{2}(q^2+1)$ and $\frac{q}{2}(q-1)^2$, respectively.  (Note that we do not consider $PSp_4(2)\cong \mathfrak{S}_6$.)  These characters lie in $\irrp 3{B_0(S)}^{\sigma_1}$ and the latter character extends to $\aut{S}$.  (However, we remark that the first character is not $\aut{S}$-invariant, as in this case it is switched with $ 0 \hbox{ } 1 \choose 2 $ under the action of the graph automorphism, by \cite[Theorem 2.5]{malle08}).

\begin{table}\small
\caption{Some Unipotent Characters in $\irrp 3 {B_0(S)}^{\sigma_1}$ for type $B_{n}(q)$ and 
$C_n(q)$ with $n\geq 2$ and $3\nmid q$}\label{tab:unipBC}
\begin{tabular}{|c|c|c|}
\hline
 Conditions on $q, n$ & Symbol & $\chi(1)_{q'}$ (possibly excluding factors of $1/2$)\\
\hline \hline
{$3\mid(q-1)$ or} & \multirow{3}{*}{ $0 \hbox{ } 1 \choose n$} & \multirow{3}{*}{$\frac{(q^{n-1}+1)(q^n+1)}{q+1}$}\\
$3\mid(q+1)$; $n$ even; $3\nmid(n-1)$ or & & \\
 $3\mid(q+1)$; $n$ odd; $3\nmid n$ & & \\
 \hline
$3\mid(q-1); 3\nmid n$ &$ 0 \hbox{ } 2 \choose n-1 $ & $\frac{(q^{2n}-1)(q^{n-3}+1)(q^{n-1}+1)}{q^4-1}$\\
\hline
$3\mid(q-1); 3\nmid (n-1)$ or & \multirow{2}{*}{$1\hbox{ } n\choose 0$} & \multirow{2}{*}{$\frac{(q^{n-1}-1)(q^n+1)}{q-1}$} \\
{$3\mid(q+1)$; $n$ even } & & \\
\hline
$3\mid(n-1)$  & $1 \hbox{ } n-1 \choose 1$ & $\frac{(q^{2n}-1)(q^{2(n-2)}-1)}{(q^2-1)^2}$\\
\hline
{$3\mid(q+1)$; $n$ odd} or  & \multirow{2}{*}{$0\hbox{ } n\choose 1$} & \multirow{2}{*}{$\frac{(q^{n-1}+1)(q^n-1)}{q-1}$} \\
$3\mid(q-1)$; $3\nmid n$ & & \\
\hline
{$3\mid(q+1)$; $n$ odd; $3\mid n$ } or  & \multirow{3}{*}{$0\hbox{ } 1 \hbox{ } n \choose 1 \hbox{ } 2$} & \multirow{3}{*}{$\frac{(q^{2(n-1)}-1)(q^n-1)(q^{n-2}+1)}{(q^2-1)^2}$}\\
$3\mid(q+1)$; $n$ even; $3\mid(n-2)$ or & &\\
$3\mid(q-1)$; $3\mid(n-2)$ & & \\
\hline
\end{tabular}
\end{table}

\vspace{.3cm}

\paragraph{\textbf{Type $D_n$ and $\tw{2}D_n$, $n\geq 4$}}\label{crossD}
In this case, if $S$ is not $D_4(q)$,  Tables \ref{tab:unipD} and \ref{tab:unip2D} list at least two distinct unipotent characters that are $\aut{S}$-invariant members of $\irrp 3{B_0(S)}^{\sigma_1}$.  If $S$ is $D_4(q)$ and $3\mid(q-1)$, we may instead take the unipotent characters labeled by symbols $1 \hbox{ } 3 \choose 0 \hbox{ } 2$ and $3\choose 1$ with $\chi(1)_{q'}=\frac{1}{2}(q+1)^3(q^3+1)$ and $(q^2+1)^2$, respectively. When $3\mid(q+1)$, we may take the characters index by $3\choose 1$ and $0 \hbox{ } 1\hbox{ } 2\hbox{ } 3\choose \emptyset$, the latter of which satisfies $\chi(1)_{q'}=\frac{1}{2}(q-1)^3(q^3-1)$.
\end{proof}

\begin{table}\small
\caption{Some Unipotent Characters in $\irrp 3 {B_0(S)}^{\sigma_1}$ for type $D_{n}(q)$ with $n\geq 5$ and $3\nmid q$}\label{tab:unipD}
\begin{tabular}{|c|c|c|}
\hline
 Conditions on $q, n$ & Symbol & $\chi(1)_{q'}$ (possibly excluding factors of $1/2$)\\
\hline \hline
{$3\nmid (n-1)$} & $1 \hbox{ } n \choose 0 \hbox{ } 1$ & $\frac{q^{2(n-1)}-1}{q^2-1}$\\
\hline
{$3\mid n$} & $1 \hbox{ } 2 \hbox{ } n \choose 0 \hbox{ } 1 \hbox{ } 2$ & $\frac{(q^{2(n-2)}-1)(q^{2(n-1)}-1)}{(q^2-1)^2(q^2+1)}$\\
\hline
{$3\mid (n-2)$} or & \multirow{2}{*}{$n-2\choose 2$} & \multirow{2}{*}{$\frac{(q^{2(n-1)}-1)(q^n-1)(q^{n-4}+1)}{(q^2-1)^2(q^2+1)}$} \\
{$3\mid(q+1); 3\mid n; n \hbox{ odd}$} & & \\
\hline
{$3\mid(q-1); 3\mid (n-1)$} or & \multirow{2}{*}{$1 \hbox{ } n-1 \choose 0 \hbox{ } 2$} & \multirow{2}{*}{$\frac{(q^n-1)(q^{n-2}-1)(q^{n-1}+1)(q^{n-3}+1)}{(q-1)^2(q^2+1)}$}\\
$3\mid(q+1);$ $n$ odd &&\\
\hline
{$3\mid(q-1); 3\mid (n-2)$} or & \multirow{3}{*}{$0 \hbox{ } n-1 \choose 1 \hbox{ } 2$} & \multirow{3}{*}{$\frac{(q^n-1)(q^{n-2}+1)(q^{n-1}-1)(q^{n-3}+1)}{(q^2-1)^2}$}\\

{$3\mid(q+1); 3\nmid n; n \hbox{ even}$} or & & \\
{$3\mid(q+1); 3\mid n; n \hbox{ odd}$} & & \\
 \hline
{$3\mid(q-1); 3\nmid n$} or & \multirow{4}{*}{$n-1  \choose 1$} & \multirow{4}{*}{$\frac{(q^{n}-1)(q^{n-2}+1)}{q^2-1}$}\\
{$3\mid(q+1); 3\mid (n-1)$} or& & \\
{$3\mid(q+1); 3\mid (n-2); n \hbox{ even}$} or& & \\
{$3\mid(q+1); 3\mid n; n \hbox{ odd}$} & & \\
\hline
\end{tabular}
\end{table}

\begin{table}
\caption{Some Unipotent Characters in $\irrp 3 {B_0(S)}^{\sigma_1}$ for type $\tw{2}D_{n}(q)$ with $n\geq 4$ and $3\nmid q$}\label{tab:unip2D}
\begin{tabular}{|c|c|c|}
\hline
 Conditions on $q, n$ & Symbol & $\chi(1)_{q'}$ (possibly excluding factors of $1/2$)\\
\hline \hline
{$3\nmid (n-1)$} & $0 \hbox{ } 1 \hbox{ } n \choose 1$ & $\frac{q^{2(n-1)}-1}{q^2-1}$\\
\hline
{$3\mid (n-1)$} or & \multirow{4}{*}{$ 1 \hbox{ } n-1 \choose \emptyset$} &\multirow{4}{*}{ $\frac{(q^{n}+1)(q^{n-2}-1)}{q^2-1}$}\\
$3\mid(q-1); 3\mid n$ or & & \\
$3\mid(q+1); 3\mid n$; $n$ even or & & \\
$3\mid(q+1); 3\mid (n-2)$; $n$ odd  & & \\
\hline
{$3\mid(q-1); 3\nmid(n-1)$} or & \multirow{3}{*}{$2\hbox{ } n-2\choose \emptyset$} & \multirow{3}{*}{$\frac{(q^{2(n-1)}-1)(q^n+1)(q^{n-4}-1)}{(q^2-1)^2(q^2+1)}$ }\\
{$3\mid(q+1); 3\mid(n-2)$} or &&\\
{$3\mid(q+1); 3\mid n$; $n$ even} &&\\
\hline
$ 3\mid n$ & $0\hbox{ } 1\hbox{ } 2\hbox{ } n\choose 1\hbox{ } 2$ & $\frac{(q^{2(n-1)}-1)(q^{2(n-2)}-1)}{(q^2-1)^2(q^2+1)}$\\
\hline
$3\mid(q-1); 3\mid(n-1)$ & \multirow{3}{*}{$ 1 \hbox{ } 2\hbox{ } n-1 \choose 0$} & \multirow{3}{*}{$\frac{(q^n+1)(q^{n-1}+1)(q^{n-2}-1)(q^{n-3}-1)}{(q^2-1)^2}$} \\
$3\mid(q+1); 3\nmid n; n\hbox{ odd}$ & & \\
$3\mid(q+1); 3\mid n; n\hbox{ even}$ & & \\
\hline
$3\mid(q-1); 3\mid n$ & & \\
$3\mid(q+1); 3\mid(n-1); n\hbox{ even}$ & $ 0 \hbox{ } 1\hbox{ } n-1 \choose 2$ & $\frac{(q^n+1)(q^{n-1}-1)(q^{n-2}-1)(q^{n-3}+1)}{(q^2-1)^2}$\\
$3\mid(q+1); 3\mid(n-2); n\hbox{ odd}$ & & \\
\hline
\end{tabular}
\end{table}

We next establish Theorem \ref{simples3} for the case that $S$ has cyclic Sylow $3$-subgroups, which we recall from Proposition \ref{cyclicsylowclass} occurs when $S=PSL_2(q)$ for $3\nmid q$ and when $S=PSL_3^\epsilon(q)$ for $3\mid (q+\epsilon)$.  
\begin{pro}\label{typeAcyclic}
Let $S=PSL_2(q)$ with $3\nmid q$ or $PSL^\epsilon_3(q)$ with $3\mid(q+\epsilon)$.  Then there exist nontrivial $\chi_1, \chi_2 \in\irrp 3{B_0(S)}^{\sigma_1}$ such that $\chi_1$ extends to $\aut{S}$. 
\end{pro}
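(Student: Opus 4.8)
\emph{Overall plan.} The plan is to take $\chi_1$ to be the Steinberg character and $\chi_2$ a semisimple character attached to an element of order $3$, both produced on a convenient central extension. By Proposition \ref{sporadicalt3} we may assume $S$ is none of $\mathfrak A_5\cong PSL_2(4)\cong PSL_2(5)$, $\mathfrak A_6\cong PSL_2(9)$, $PSL_3(2)\cong PSL_2(7)$, or $PSU_3(3)$, so that $S=G/\zent G$ with $G,\wt G,\wt G^\ast$ as in Section \ref{sec:setup}; here $G=SL_2(q)$ or $SL_3^\epsilon(q)$ and $\wt G^\ast\cong GL_2(q)$ or $GL_3^\epsilon(q)$. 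In every case $3\nmid|\zent G|$: one has $|\zent{SL_2(q)}|=\gcd(2,q-1)$, and when $3\mid(q+\epsilon)$ we get $3\nmid(q-\epsilon)$, whence $|\zent{SL_3^\epsilon(q)}|=\gcd(3,q-\epsilon)=1$. Hence, exactly as in the closing paragraphs of Section \ref{generalities} and using \cite[Lemma 17.2]{CE04}, any irreducible character of $G$ lying in $B_0(G)$ and trivial on $\zent G$ descends to a character in $\irrp 3{B_0(S)}$ of the same degree; both $\chi_1$ and $\chi_2$ will arise this way.

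For $\chi_1$ I take the Steinberg character $\mathrm{St}_S$: its degree is a power of $q$ and so prime to $3$, it is rational-valued and therefore $\sigma_1$-fixed, and it extends to $\aut S$ (e.g.\ as a non-exceptional unipotent character, by \cite{malle08}). That $\mathrm{St}_S\in B_0(S)$ will follow from the block computation in the last paragraph.

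For $\chi_2$ I use Lemma \ref{SFTlem3.4}. Choose a semisimple element $s\in[\wt G^\ast,\wt G^\ast]$ of order exactly $3$ whose centralizer $\cent{\wt G^\ast}{s}$ contains a Sylow $3$-subgroup of $\wt G^\ast$; such an $s$ exists in each case. Indeed, in $GL_3^\epsilon(q)$ the Sylow $3$-subgroup is cyclic (as $S$ has cyclic Sylow $3$-subgroup by Proposition \ref{cyclicsylowclass}) and, since $3\nmid(q-\epsilon)$, is contained in $[\wt G^\ast,\wt G^\ast]\cong SL_3^\epsilon(q)$, so we let $s$ generate its subgroup of order $3$; in $GL_2(q)$ with $3\mid(q-1)$ we take $s=\diag(a,a^{-1})$ with $a$ of order $3$, whose centralizer is the full diagonal torus, and with $3\mid(q+1)$ we take an element of order $3$ in the norm-one torus, which lies in $SL_2(q)$. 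Since $\cent{\wt G^\ast}{s}$ contains a Sylow $3$-subgroup, the semisimple character $\wt{\chi}_s$ of $\wt G$ has $3'$-degree; since $s\in[\wt G^\ast,\wt G^\ast]$ it is trivial on $\zent{\wt G}$ by \cite[Lemma 4.4]{navarrotiep13}; and since comparing determinants shows $s$ is not conjugate to $sz$ for any nontrivial $z\in\zent{\wt G^\ast}$ (this would force $z^3=1$ with $|z|$ prime to $3$), $\wt{\chi}_s$ restricts irreducibly to $G$. Finally, $\sigma_1$ acts on $3$-power roots of unity by $\zeta\mapsto\zeta^4$ and fixes $3'$-roots, so Lemma \ref{SFTlem3.4} gives $\wt{\chi}_s^{\sigma_1}\in\mathcal E(\wt G,s^4)=\mathcal E(\wt G,s)$ because $s^4=s$; thus $\wt{\chi}_s$ is $\sigma_1$-fixed. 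Descending to $S$ yields a nontrivial $\chi_2\in\irrp 3{B_0(S)}^{\sigma_1}$ whose degree differs from $\mathrm{St}_S(1)$, once block membership is established.

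The main point that remains, and which I expect to be the crux, is to show that $\mathrm{St}$ and $\wt{\chi}_s$ lie in $B_0(\wt G)$. I will argue that $\mathcal E_3(\wt G,1)$ equals the single block $B_0(\wt G)$. By Section \ref{generalities} (via \cite[Theorem 9.12]{CE04} and \cite{brouemichel}) this set is a union of blocks, each meeting $\mathcal E(\wt G,1)$, so it suffices to see that all unipotent characters of $\wt G$ lie in one block; by \cite{FS82} this holds exactly when the labelling partitions share a common $e$-core, where $e$ is the order of $\epsilon q$ modulo $3$. For $GL_2(q)$ the unipotent characters correspond to $(2),(1^2)$ and for $GL_3^\epsilon(q)$ to $(3),(2,1),(1^3)$; in each relevant case ($e=1$ or $e=2$) these partitions have a common $e$-core, so $\mathcal E_3(\wt G,1)=B_0(\wt G)$. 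Therefore $\mathrm{St}$ and $\wt{\chi}_s\in\mathcal E(\wt G,s)$ (with $s$ a $3$-element) both lie in $B_0(\wt G)$, and restricting to $G$ and descending to $S$ places $\chi_1=\mathrm{St}_S$ and $\chi_2$ in $\irrp 3{B_0(S)}^{\sigma_1}$ with $\chi_1$ extending to $\aut S$, as required. The genuine subtleties are exactly this block identification together with the requirement that $s$ be chosen inside $[\wt G^\ast,\wt G^\ast]$ so that $\wt{\chi}_s$ descends to $S$; both are settled by the elementary case analysis above.
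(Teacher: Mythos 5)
Your overall strategy coincides with the paper's: take $\chi_1=\mathrm{St}_S$ and $\chi_2$ the descent of a semisimple character $\wt{\chi}_s$ for a suitable order-$3$ element $s\in[\wt{G}^\ast,\wt{G}^\ast]$, use Lemma \ref{SFTlem3.4} for $\sigma_1$-invariance, and descend to $S$ via triviality on the center and the fact that $B_0(\wt{G})$ covers only $B_0(G)$. However, your block-membership step contains a genuine error in the case $S=PSL_3^\epsilon(q)$ with $3\mid(q+\epsilon)$. There $e$, the order of $\epsilon q$ modulo $3$, equals $2$, and the partitions $(3),(2,1),(1^3)$ do \emph{not} share a common $2$-core: the $2$-cores of $(3)$ and $(1^3)$ are both $(1)$, but $(2,1)$ has no $2$-hook and is its own $2$-core. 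Concretely, the unipotent character labelled $(2,1)$ has degree $q(q+\epsilon)$, whose $3$-part equals $(q+\epsilon)_3=|\wt{G}|_3$, so it has $3$-defect zero and forms a unipotent block by itself. Hence $\mathcal{E}_3(\wt{G},1)$ is a union of (at least) two blocks, your claimed equality $\mathcal{E}_3(\wt{G},1)=B_0(\wt{G})$ fails, and as written your argument does not place $\mathrm{St}$ and $\wt{\chi}_s$ in the principal block in this case.

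The conclusion you want is nevertheless true, and the paper's proof shows how to repair the step: a character of $3'$-degree lies in a block of maximal defect, and every unipotent block meets $\mathcal{E}(\wt{G},1)$, so any maximal-defect unipotent block must contain $1$ or $\mathrm{St}$; these two share the $2$-core $(1)$ and hence lie in a common block, while the $(2,1)$ block has defect zero. Therefore $B_0(\wt{G})$ is the \emph{unique} unipotent block of maximal defect, and every $3'$-degree character of $\mathcal{E}_3(\wt{G},1)$ --- in particular $\mathrm{St}$ and $\wt{\chi}_s$ --- lies in it. (Your identification $\mathcal{E}_3(\wt{G},1)=B_0(\wt{G})$ is correct for $GL_2(q)$, where both relevant $e$-cores are empty.) Two smaller inaccuracies: for $GL_2(q)$ the determinant comparison only yields $z^2=1$, not $z^3=1$, and $\zent{GL_2(q)}$ can have order divisible by $3$ when $3\mid(q-1)$, so you should instead compare eigenvalue sets (or element orders) of $s$ and $sz$; and $PSL_3(2)\cong PSL_2(7)$ is not among the groups treated in Proposition \ref{sporadicalt3}, so it cannot be excluded by that citation --- though no exclusion is actually needed, since your general argument applies to it.
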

\begin{proof}
First let $S=PSL_2(q)$ with $3\nmid q$.  In this case, every character of $S$ is either $3$-defect zero or has degree prime to $3$.  As before, the Steinberg character is a member of $\irrp 3{B_0(S)}^{\sigma_1}$ and extends to $\aut{S}$.  Further,  
the only two unipotent characters, $1_{\wt{G}}$ and $\mathrm{St}_{\wt{G}}$, both lie in the principal block of $\wt{G}=GL_2(q)$, and hence there is a unique unipotent block of $\wt{G}$.  We may take $\chi_1=\mathrm{St}_{S}$ as before.

Now, let $s\in\wt{G}=GL_2(q)$ have eigenvalues $\zeta, \zeta^{-1}$, where $\zeta\in\mathbb{F}_{q^2}^\times$ has order $3$.   Then the semisimple character $\chi_s \in \mathcal{E}(\wt{G}, s)\subseteq \mathcal{E}_3(\wt{G}, 1)$ lies in the principal block of $\wt{G}$ and is trivial on $\zent{\wt{G}}$ since $s\in SL_2(q)\cong [\wt{G}^\ast, \wt{G}^\ast]$.  Since $sz$ is not conjugate to $s$ for $1\neq z\in \zent{\wt{G}^\ast}$, we also see $\chi_s$ is irreducible on restriction to $G$.  Further, Lemma \ref{SFTlem3.4} yields that $\chi_s$ is fixed by $\sigma_1$.  Then the restriction $(\chi_s)_G$ lies in $B_0(G)$ since the principal block of $\wt{G}$ covers a unique block of $G$. Finally, in this case $\chi_s(1)=q+\eta$, where $\eta\in\{\pm1\}$ is such that $3\mid q-\eta$.  Hence this character may be viewed as a member of $\irrp 3{B_0(S)}^{\sigma_1}$, arguing as before.

Now let $S=PSL_3^\epsilon(q)$ with $3\mid(q+\epsilon)$. Then $S=G=SL_3^\epsilon(q)$ and $\wt{G}^\ast\cong\wt{G}=G\times \zent{\wt{G}}$.  Since the unipotent characters of $G$ are $1_G, \mathrm{St}_G$, and a character of degree $q(q+\epsilon)$, we see that again $B_0(G)$ is the only unipotent block of maximal defect (as the other has defect zero).  Then every character of $\mathcal{E}_3(G, 1)$ with $3'$-degree is a member of $B_0(G)$.  We may again take $\chi_1=\mathrm{St}_G$.  Taking $s\in\wt{G}^\ast$ to have eigenvalues $\{\zeta, \zeta^{-1}, 1\}$, where $\zeta\in\mathbb{F}_{q^2}^\times$ has order $3$, the corresponding character of $G$ has degree $q^3-\epsilon$, and we may again view $(\chi_s)_G$ as a character of $\irrp 3{B_0(S)}^{\sigma_1}$.
\end{proof}

\begin{pro}\label{SL3notcyclic}
Let $S=PSL_3^\epsilon(q)$ with $3\mid (q-\epsilon)$.  Then there exist nontrivial $\chi_1, \chi_2, \chi_3 \in\irrp 3{B_0(S)}^{\sigma_1}$ such that $\chi_1$ and $\chi_2$ extend to $\aut{S}$. 
\end{pro}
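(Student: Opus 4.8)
The plan is to take $\chi_1=\mathrm{St}_S$ and $\chi_2$ the unique nontrivial non-Steinberg unipotent character as the two characters extending to $\aut S$, and to construct $\chi_3$ by restricting to $G$ a suitable semisimple character of $\wt G$. Throughout I keep the set-up of Section \ref{generalities}, so $G=SL_3^\epsilon(q)$, $\wt G=GL_3^\epsilon(q)$, $S=G/\zent G$ and $Z:=\zent G\cong {\sf C}_3$. The hypothesis $3\mid(q-\epsilon)$ means the order $e$ of $\epsilon q$ modulo $3$ is $1$, so exactly as in Proposition \ref{lietypecross} every unipotent character of $\wt G$ lies in $B_0(\wt G)$ and $\mathcal{E}_3(\wt G,1)=B_0(\wt G)$ is a single block. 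The three unipotent characters of $\wt G$ are $1_{\wt G}$, $\mathrm{St}_{\wt G}$ and one of degree $q(q+\epsilon)$; all have $3'$-degree, are $\sigma_1$-fixed (being unipotent), are trivial on $\zent{\wt G}$, and restrict irreducibly to $G$, so they descend to members of $\irrp 3{B_0(S)}^{\sigma_1}$ via \cite[Lemma 17.2]{CE04}. By \cite[Theorems 2.4 and 2.5]{malle08} the two nontrivial ones extend to $\aut S$, giving $\chi_1,\chi_2$.

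For $\chi_3$ I would take $s=\diag(\zeta,\zeta^2,1)\in [\wt G^\ast,\wt G^\ast]=SL_3^\epsilon(q)$, where $\zeta$ has order $3$. Then the semisimple character $\chi_s\in\mathcal{E}(\wt G,s)\subseteq B_0(\wt G)$ is trivial on $\zent{\wt G}$ by \cite[Lemma 4.4]{navarrotiep13}, and it is $\sigma_1$-fixed: by Lemma \ref{SFTlem3.4} we have $\chi_s^{\sigma_1}\in\mathcal{E}(\wt G,s^4)=\mathcal{E}(\wt G,s)$ since $s^3=1$, and $\chi_s$ is the unique semisimple character in its series. Because $s$ is conjugate to $sz$ for every $z$ in the order-$3$ subgroup of $\zent{\wt G^\ast}$ (multiplication by such $z$ permutes the eigenvalues $\{1,\zeta,\zeta^2\}$), the restriction $(\chi_s)_G$ is multiplicity-free with exactly three constituents $\xi_1,\xi_2,\xi_3$, each of $3'$-degree $(q+\epsilon)(q^2+\epsilon q+1)/3$. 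A short central-character computation shows each $\xi_i$ is trivial on $Z$, so the $\xi_i$ descend to $S$, and each lies in $B_0(G)$, the unique block of $G$ covered by $B_0(\wt G)$; hence the $\xi_i$ give $3'$-degree characters of $B_0(S)$.

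The main obstacle is to show that at least one $\xi_i$ is $\sigma_1$-fixed. Since $\cent{\bG^\ast}{s}$ is disconnected in the adjoint group $G^\ast$, Lemma \ref{SFTlem3.4} only tells us that $\sigma_1$ preserves the series $\mathcal{E}(G,\bar s)$, and a priori $\sigma_1$ could permute $\xi_1,\xi_2,\xi_3$ cyclically. To rule this out I would argue exactly as in the proof of Proposition \ref{liecross}: the $\gcd(3,q-\epsilon)=3$ Gelfand--Graev characters of $G$ are each induced from a linear character of a maximal unipotent subgroup, whose values are roots of unity of order prime to $3$ and are therefore fixed by $\sigma_1$; hence each Gelfand--Graev character is $\sigma_1$-fixed. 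By the theory of regular characters (Digne--Lehrer--Michel), the three regular characters in $\mathcal{E}(G,\bar s)$ are precisely $\xi_1,\xi_2,\xi_3$, occurring with multiplicity one, one in each Gelfand--Graev character. As $\sigma_1$ fixes each Gelfand--Graev character, preserves $\mathcal{E}(G,\bar s)$, and preserves degrees, the unique $\xi_i$ occurring in a given Gelfand--Graev character must be $\sigma_1$-fixed; so in fact all three lie in $\irrp 3{B_0(S)}^{\sigma_1}$.

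Finally I would set $\chi_3=\xi_1$. Its degree $(q+\epsilon)(q^2+\epsilon q+1)/3$ differs from $q^3$ and from $q(q+\epsilon)$ for all relevant $q$, so $\chi_1,\chi_2,\chi_3$ are three distinct nontrivial members of $\irrp 3{B_0(S)}^{\sigma_1}$ with $\chi_1,\chi_2$ extending to $\aut S$, as required. The delicate point, and the step I expect to require the most care, is the Gelfand--Graev argument establishing $\sigma_1$-invariance of the individual constituents $\xi_i$, precisely because the centralizer of $s$ becomes disconnected upon passing from $\wt G$ to $G$.
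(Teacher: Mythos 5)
Your proposal is correct, and its skeleton coincides with the paper's own proof: the paper also takes $\chi_1,\chi_2$ to be the descents to $S$ of the two nontrivial unipotent characters of $\wt{G}$ (noting that $e=1$ forces $\mathcal{E}_3(\wt{G},1)=B_0(\wt{G})$ to be a single block), and it builds $\chi_3$ from exactly the same semisimple element $s$ with eigenvalues $\{1,\zeta,\zeta^{-1}\}$, observing that $3\mid\mid\chi_s(1)$, that $s$ is conjugate to $sz$ so that $(\chi_s)_G$ splits, and that the constituents are $3'$-degree characters of $B_0(G)$ trivial on $\zent{G}$. Where you genuinely diverge is at the step you yourself flag as delicate: the $\sigma_1$-invariance of the individual constituents $\xi_i$. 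The paper disposes of this in one line by inspecting the generic character table available in CHEVIE, whereas you give a computer-free argument: each Gelfand--Graev character of $G$ is $\sigma_1$-fixed because it is induced from a linear character of a unipotent subgroup whose values are roots of unity of order prime to $3$; $\sigma_1$ stabilizes $\mathcal{E}(G,\bar{s})$ by Lemma \ref{SFTlem3.4} since $\bar{s}^4=\bar{s}$; and each Gelfand--Graev character contains exactly one regular character of $\mathcal{E}(G,\bar{s})$, with multiplicity one. Since $\cent{\bG^\ast}{\bar{s}}^\circ$ is a torus and the component group of $\cent{\bG^\ast}{\bar{s}}$ has order $3$, the series $\mathcal{E}(G,\bar{s})$ consists precisely of its three regular characters, namely the $\xi_i$, so each is fixed. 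This is the same mechanism the paper itself uses for $p=2$ in Proposition \ref{liecross} (there via \cite[Lemma 3.8]{ST18a}), so your route is a natural uniformization of the two primes: it buys independence from CHEVIE and works uniformly in $q$, at the modest cost of invoking the Digne--Lehrer--Michel theory of regular characters for groups with disconnected center. That multiplicity-one statement, together with the fact that the number of regular characters in $\mathcal{E}(G,\bar{s})$ equals the order of the $F^\ast$-fixed component group (hence all three constituents are regular), is the one load-bearing fact you should cite precisely, since it does not appear in the paper's bibliography.
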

\begin{proof}
In this case, we see that all three unipotent characters are members of $\irrp 3{\wt{G}}$ and that there is a unique unipotent block $B_0(\wt{G})$.  Further, the unipotent characters are rational-valued, and therefore are members of $\irrp 3{B_0(\wt{G})}^{\sigma_1}$.  Then we may take $\chi_1$ and $\chi_2$ to be the restrictions to $G$ (viewed as a character of $S$) of the two nontrivial unipotent characters.  

The semisimple element $s\in\wt{G}^\ast$ with eigenvalues $\{\zeta, \zeta^{-1}, 1\}$, where $\zeta\in\mathbb{F}_{q^2}^\times$ has order $3$, is now conjugate to $sz$ where $z=\zeta\cdot I_3\in \zent{\wt{G}^\ast}$.  The corresponding semisimple character has degree $\chi_s(1)=(q+\epsilon)(q^2+\epsilon q+1)$, so $\chi_s\in\irr{B_0(\wt{G})}^{\sigma_1}$ satisfies $3\mid\mid \chi_s(1)$ and is not irreducible on restriction to $G$.  Then the constituents of the restriction to $G$ are members of $\irrp 3{B_0({G})}$, and are trivial on $\zent{G}$ since $s\in [\wt{G}^\ast, \wt{G}^\ast]\cong G$, so it suffices to see that they are also $\sigma_1$-invariant, using the character table available in CHEVIE.
\end{proof}

Together, Propositions \ref{lietypecross} through \ref{SL3notcyclic} yield Theorem \ref{simples3} for the simple groups of Lie type in non-defining characteristic, completing parts (i) and (ii).

\subsubsection{Lie Type in Defining Characteristic for $p=3$}

We now consider the case $S$ is as in Section \ref{sec:setup} with $\bG$ of simply connected type defined in characteristic $3$.  Let $(\bG^\ast, F^\ast)$ be dual to $(\bG, F)$.  Keep in mind the notations and considerations of Section \ref{generalities}, where now $q$ is a power of $3$. Note that $|\wt{S}/S|=|\zent{G}|$, and this is $1$ unless $G$ is of classical type or $G=E_{7, sc}(q)$. 

Since $\wt{G}/G$ has size prime to $3$, it follows that any irreducible character of $G$ lying under $\irrp 3 {\wt{G}}$ is a member of $\irrp 3{G}$.  Since $|\sigma_1|$ is a power of $3$, we further see that for any $\wt{\chi}\in\irrp 3{\wt{G}}^{\sigma_1}$, there is a member of $\irrp 3{G}^{\sigma_1}$ lying under $\wt{\chi}$.  We also have $\irrp 3{B_0(S)}=\irrp 3{S}$, so any member of $\irrp 3{G}$ with $\zent{G}$ in its kernel may be viewed as a member of $\irrp 3{B_0(S)}$.

 Now, given a semisimple element $s\in\wt{G}^\ast$, we have $|s|$ is prime to $3$, and hence the Lusztig series $\mathcal{E}(\wt{G}, s)$ is fixed by $\sigma_1$ using Lemma \ref{SFTlem3.4}.  Then in particular, the unique semisimple character $\wt{\chi}_s\in\irrp 3 {\wt{G}}$ in this series must be fixed by $\sigma_1$.    To illustrate three nontrivial characters of $\irrp 3{G}^{\sigma_1}$ that are not $\aut S$-conjugate, it therefore suffices to show that there are semisimple elements $1\neq s_1, s_2, s_3 \in\wt{G}^\ast$ such that 
  \begin{itemize}

  \item[(1)]  $s_i$ is not $\wt{G}^\ast$-conjugate to $s_j^\varphi z$ for $i\neq j$, $z\in \zent{\wt{G}^\ast}$, and $\varphi$ any (possibly trivial) graph-field automorphism.
 \end{itemize}
 
 In most cases, we further ensure that one of these characters is $\aut{S}$-invariant, by choosing $s_1$ so that 
 \begin{itemize}
 \item[(2)] the class of $s_1$ is invariant under graph-field automorphisms and 
 \item[(3)] $s_1$ is not $\wt{G}^\ast$-conjugate to $s_1z$ for any $1\neq z\in \zent{\wt{G}^\ast}$.
 \end{itemize}
Property (2) will ensure that $\wt{\chi}_{s_1}$ is invariant under graph-field automorphisms, using \cite[Corollary 2.4]{NTTcyclo}, and property (3) will imply that $\wt{\chi}_{s_1}$ restricts irreducibly to $G$, so the resulting character of $G$ is $\aut S$- and $\sigma_1$-invariant. 
 Finally, we will choose $s_1, s_2,$ and $s_3$ such that 
  \begin{itemize}
 \item[(4)] $s_i\in [\wt{G}^\ast, \wt{G}^\ast]$ for $i=1,2,3$,
 \end{itemize}
  so that the $\wt{\chi}_{s_i}$ are trivial on $\zent{\wt{G}}$,   
  ensuring that all three characters of $G$ may be viewed as characters of $\irrp 3{B_0(S)}^{\sigma_1}$ from the above discussion.

\begin{pro}
Let $S=G_2(q)$, $\tw{3}D_4(q)$, $\tw{2}G_2(q)$, $E_6^\pm(q)$, $E_7(q)$, $F_4(q)$ or $E_8(q)$ be simple with $q$ a power of $3$.  Then there exist nontrivial $\chi_1, \chi_2, \chi_3\in\irrp 3{B_0(S)}^{\sigma_1}$ that are pairwise not $\aut{S}$-conjugate and such that $\chi_1$ is $\aut{S}$-invariant. 
\end{pro}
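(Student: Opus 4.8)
Following the discussion preceding the statement, it suffices to produce semisimple elements $s_1,s_2,s_3\in\wt{G}^\ast$ of order prime to $3$ satisfying conditions (1)--(5) recorded there; the associated semisimple characters $\wt{\chi}_{s_i}$ then automatically have $3'$-degree, lie in $\irrp 3{B_0(S)}$ because $\irrp 3{B_0(S)}=\irrp 3{S}$ in defining characteristic, and are $\sigma_1$-invariant by Lemma \ref{SFTlem3.4} (since $s_i$ has $3'$-order, $\sigma_1$ fixes its series). The plan is to exhibit such elements type by type, choosing $s_1$ so that its $\wt{G}^\ast$-class is stable under all graph-field automorphisms. A convenient simplification is that $q$ is a power of $3$, so $\gcd(3,q\mp1)=1$; hence $E_6(q)$ and $\tw{2}E_6(q)$ have trivial centre in this characteristic, and consequently $\zent{\wt{G}^\ast}=1$ for \emph{every} type on the list except $E_7(q)$, where $\wt{G}/G$ corresponds to a single central involution $z_0\in\zent{\wt{G}^\ast}$. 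Thus for all types but $E_7$ one has $\wt{\bG}=\bG$, $\wt{G}^\ast=G^\ast$ is perfect, and conditions (2), (4) and (5) are automatic.

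For the $\aut{S}$-invariant character $\chi_1$, in the trivial-centre cases I would take $s_1$ to be a semisimple involution (these exist as $q$ is odd) whose centralizer is a maximal-rank reductive subgroup of a type that occurs for a \emph{unique} class of involutions in $\wt{G}^\ast$: for instance a class with centralizer of type $A_1+\tilde A_1$ in $G_2$, of type $B_4$ in $F_4$, of type $D_8$ in $E_8$, of type $A_1A_5$ in $E_6^\pm$, and analogously for $\tw{3}D_4$. Because graph and field automorphisms permute the finitely many involution classes while preserving centralizer types, a class that is the unique one of its centralizer type must be fixed, giving condition (3); condition (4) then reads off the centralizer structure. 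By \cite[Corollary 2.4]{NTTcyclo} the invariance of the class transfers to $\wt{\chi}_{s_1}$, and since $s_1\in[\wt{G}^\ast,\wt{G}^\ast]$ the character $\wt{\chi}_{s_1}$ restricts irreducibly to $G$ and descends to an $\aut{S}$- and $\sigma_1$-invariant member of $\irrp 3{B_0(S)}$.

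For $\chi_2$ and $\chi_3$ I would take $s_2,s_3$ to be (regular) semisimple elements of prime-to-$3$ order lying in maximal tori attached to distinct cyclotomic polynomials $\Phi_m$, chosen so that the three semisimple characters $\wt{\chi}_{s_1},\wt{\chi}_{s_2},\wt{\chi}_{s_3}$ have pairwise distinct degrees; this is arranged by taking the $\cent{\wt{G}^\ast}{s_i}$ of different reductive types. Since $\aut{S}$ preserves character degrees, distinct degrees immediately force the three characters to be pairwise non-$\aut{S}$-conjugate, yielding (1) and (2). I would record the explicit elements and their centralizer structures in a table organized by type. The Ree groups $\tw{2}G_2(q)$ with $q=3^{2m+1}\geq 27$ carry no graph automorphism and have a special structure, so there I would instead read the three required characters directly off the generic character table in \cite{chevie}; the finitely many genuinely small cases are checked with \cite{GAP}.

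The main obstacle is the verification of condition (3), namely the actual $\aut{S}$-invariance of $\chi_1$, since this requires controlling the action of graph-field automorphisms on semisimple \emph{classes}, not merely on degrees. The involution choice is designed to make this transparent (a unique class of a prescribed centralizer type cannot be moved), after which \cite[Corollary 2.4]{NTTcyclo} supplies the invariance of the character. The remaining delicate point is $S=E_7(q)$: here the central involution $z_0$ can identify $s_i$ with $s_jz_0$, so I would take all three $s_i$ of \emph{odd} order coprime to $3$, whence $s_iz_0$ has even order and cannot be conjugate to any $s_j$; this secures conditions (2), (4) and the irreducibility of the restrictions simultaneously, while $s_1$ is chosen of odd order with a graph-field-stable class and a distinctive centralizer type so as to remain $\aut{S}$-invariant.
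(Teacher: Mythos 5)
Your proposal takes a genuinely different route from the paper. The paper's own proof is a short multiplicity-one argument: reading degrees off L\"ubeck's tables \cite{luebeckwebsite}, for each listed type it exhibits a degree occurring \emph{exactly once} in $\irr{S}$ (for instance $q^4+q^2+1$ for $G_2(q)$, $q^8+q^4+1$ for $F_4(q)$ and $\tw{3}D_4(q)$); since automorphisms and Galois automorphisms preserve degrees, such a character is automatically $\aut{S}$- and $\sigma_1$-invariant, and two further semisimple characters of distinct degrees complete the triple. Your construction of explicit semisimple elements satisfying conditions (1)--(5) is instead the machinery the paper reserves for the harder type-$A$ and classical cases (Propositions \ref{typeAdefining} and \ref{classicaldefining}). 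Applied to the exceptional types it is workable --- and in fact your involution $s_1$ produces the very same character as the paper's multiplicity-one degree (the involution with centralizer of type $A_1\tilde{A}_1$ in $G_2$ gives the semisimple character of degree $\Phi_3\Phi_6=q^4+q^2+1$) --- but it is considerably heavier, and it is precisely in the extra bookkeeping that your write-up has two genuine gaps.

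First, the $E_7(q)$ case. Your description of the center is wrong: $\zent{\wt{G}^\ast}$ is not $\{1,z_0\}$ but a cyclic group of order $q-1$ (dually, $|\wt{G}/G|=q-1$, not $2$). Conditions (2) and (4) quantify over \emph{all} $z\in\zent{\wt{G}^\ast}$, and your odd-order trick only controls those $z$ for which $s_jz$ has even order; if $z$ itself has odd order, $s_jz$ has odd order and the argument says nothing. The repair is the observation the paper makes at the start of the proof of Proposition \ref{classicaldefining}: choose the $s_i$ inside $\bK:=[\wt{\bG}^\ast,\wt{\bG}^\ast]$ (which also yields your condition (5), since $\bK^{F^\ast}$ is perfect); then conjugation preserves cosets of $\bK$, so $s_i$ can be $\wt{G}^\ast$-conjugate to $s_jz$ only when $z\in\zent{\bK}$, which has order $2$ --- and only at that point does your order argument apply. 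As written, (2) and (4) are not secured.

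Second, $G_2(q)$ with $q=3^f$ is exactly the situation where $\aut{S}$ contains the \emph{exceptional} graph automorphism special to characteristic $3$ (its square is the Frobenius $x\mapsto x^3$). Your appeal to invariance of the class of $s_1$ under ``graph-field automorphisms'' together with \cite[Corollary 2.4]{NTTcyclo} must be checked to cover this automorphism, which is not a graph-field automorphism in the ordinary sense and is not needed anywhere in the paper's defining-characteristic propositions (the classical types in characteristic $3$ have no exceptional graph automorphisms). This step can be patched --- the unique involution class gives a stable Lusztig series, and the semisimple character is the unique $3'$-degree character in that series, hence fixed --- but as written it is unjustified, and it is exactly the kind of difficulty that the paper's degree-multiplicity argument avoids entirely. (A minor further point: existence of the regular semisimple elements you want for $\chi_2,\chi_3$ needs a check at small $q$ such as $F_4(3)$, $\tw{3}D_4(3)$, $E_6^\pm(3)$, which are not excluded by Proposition \ref{sporadicalt3}.)
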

\begin{proof}
Note that we may assume $S$ is not one of the groups from Proposition \ref{sporadicalt3}.  The character degrees in these cases are available at \cite{luebeckwebsite}.  If $S$ is $G_2(q)$, $\tw{3}D_4(q)$, $\tw{2}G_2(q)$, $E_6(q)$, $\tw{2}E_6(q)$, $F_4(q)$ or $E_8(q)$, then $\wt{G}=G=S$ and there is a nontrivial odd character degree of multiplicity one, which therefore must be  $\sigma_1$- and $\aut{S}$-invariant.  Similarly, $E_7(q)$ has a unique character of degree $\Phi_3\Phi_6\Phi_7\Phi_9\Phi_{12}\Phi_{14}\Phi_{18}$, which restricts irreducibly from a character of $\wt{S}=E_7(q)_{ad}$.   
Finally, in each case there are at least two more semisimple characters with different degrees, which must yield members of $\irrp 3{B_0(S)}^{\sigma_1}$ by the above discussion.  
\end{proof}

\begin{pro}\label{typeAdefining}
Let $S=PSL_n^\epsilon(q)$ be simple with $q$ a power of $3$ and $n\geq 2$ and let $X\leq \aut{S}$ such that $X/S$ is a Sylow $3$-subgroup of $\aut{S}/S$.  Then there exist nontrivial $\chi_1, \chi_2, \chi_3\in\irrp 3{B_0(S)}^{\sigma_1}$ that are pairwise not $\aut{S}$-conjugate and such that $\chi_1$ is $X$-invariant. 
Further, if $n\geq 3$, then $\chi_1$ may be chosen to be $\aut{S}$-invariant, and if $n\geq 5$, then $\chi_1, \chi_2, \chi_3$ may all be chosen to be $\aut{S}$-invariant. 
\end{pro}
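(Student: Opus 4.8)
The plan is to realize all the required characters as (constituents of restrictions to $G$ of) semisimple characters of $\wt{G}=GL_n^\epsilon(q)$, exactly along the template recorded at the start of this subsection. So first I would reduce the statement to producing three semisimple elements $s_1,s_2,s_3\in[\wt{G}^\ast,\wt{G}^\ast]=SL_n^\epsilon(q)$ satisfying conditions (1)--(5) above. Indeed, once these are in hand, each $\wt{\chi}_{s_i}$ is $\sigma_1$-fixed because $|s_i|$ is prime to $3$ (Lemma \ref{SFTlem3.4}), is trivial on $\zent{\wt{G}}$ by (5), and has $3'$-degree; hence it descends to a member of $\irrp 3{G}^{\sigma_1}$ with $\zent{G}$ in its kernel, which by the discussion above is a member of $\irrp 3{B_0(S)}^{\sigma_1}$ since $\irrp 3{B_0(S)}=\irrp 3{S}$ in defining characteristic. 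Conditions (1) and (2) then make the three resulting characters pairwise non-$\aut{S}$-conjugate, while (3) and (4) (via \cite[Corollary 2.4]{NTTcyclo} together with irreducibility of the restriction to $G$) promote the corresponding character to an $\aut{S}$-invariant one.

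The book-keeping is governed by one clean observation. Since $q$ is a power of $3$, the map $x\mapsto x^{3}$ is the Frobenius endomorphism of $\overline{\mathbb{F}}_q$, hence a \emph{bijection} of $\overline{\mathbb{F}}_q^\times$. Therefore cubing (field automorphisms), inversion (the graph automorphism), and multiplication by a central scalar all act as bijections on the multiset of eigenvalues of a semisimple element of $\wt{G}^\ast$, and so preserve the unordered partition recording the multiplicities of its distinct eigenvalues. Consequently, if I choose $s_1,s_2,s_3$ with pairwise distinct multiplicity partitions, conditions (1) and (2) hold for free; and since a diagonal matrix whose eigenvalue multiset is closed under $x\mapsto x^{-1}$ has determinant $1$, condition (5) is also immediate.

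In the generic range $n\geq 5$ I would take
$$s_1=\diag(-1,-1,1,\dots,1),\qquad s_2=\diag(\zeta,\zeta^{-1},1,\dots,1),\qquad s_3=\diag(\zeta,\zeta,\zeta^{-1},\zeta^{-1},1,\dots,1),$$
where $\zeta$ has order $4$. These have determinant $1$ (giving (5)) and distinct partitions $(2,n-2)$, $(1,1,n-2)$, $(2,2,n-4)$ (giving (1) and (2)); each multiset is manifestly stable under $x\mapsto x^{-1}$ and $x\mapsto x^{3}$, giving (3). Finally, for $n\geq 5$ the eigenvalue $1$ of largest multiplicity ($n-2$ or $n-4$) is not moved by scaling, which forces condition (4); hence all three characters extend to $\aut{S}$, as required. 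The only point to verify is that order-$4$ elements occur as eigenvalue data for both $\epsilon=\pm1$, which holds because $4\mid q^2-1$ and the pair $\{\zeta,\zeta^{-1}\}$ is stable under the relevant (linear or unitary) Frobenius.

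The cases $n=2,3,4$ are where I expect the real work to lie, precisely because there are too few multiplicity partitions to separate three classes and the argument above degenerates. Here I would keep $s_1$ as in the generic construction for $n=3,4$ (so $\chi_1$ remains $\aut{S}$-invariant), but distinguish $s_2,s_3$ by the \emph{orders} of their eigenvalues rather than by partitions: for instance classes built from order-$4$ and order-$8$ elements, or a Singer-type element of a Coxeter torus (of order dividing $q^2\pm q+1$), checking by hand that the chosen multisets are genuinely rational over $\mathbb{F}_q$ (respectively self-conjugate for unitary types) and lie in distinct graph--field orbits. The most delicate point is $n=2$, that is $S=PSL_2(q)$ with $q=3^f\geq 9$: only the partition $(1,1)$ is available for noncentral classes, and the natural $X$-invariant choice $s_1=\diag(\zeta,\zeta^{-1})$ with $\zeta$ of order $4$ satisfies $s_1\sim s_1z$ for the central involution $z$, so $\wt{\chi}_{s_1}$ restricts \emph{reducibly} to $G$. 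This is harmless: the two constituents are permuted by $X/S$, which is a $3$-group, so each constituent is $X$-invariant, and since $\sigma_1$ has $3$-power order each is also $\sigma_1$-fixed. Three pairwise non-$\aut{S}$-conjugate members of $\irrp 3{B_0(S)}^{\sigma_1}$ are then obtained by choosing $s_2,s_3$ in distinct orbits under cubing, inversion and multiplication by $z$, whose existence for every $q=3^f\geq 9$ can be confirmed directly from the generic character table of $PSL_2(q)$ available in \cite{chevie}.
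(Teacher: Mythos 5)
Your strategy is the same as the paper's: reduce to producing semisimple elements of $\wt{G}^\ast=GL_n^\epsilon(q)$ satisfying conditions (1)--(5), realized by explicit diagonal matrices with eigenvalues $\pm 1$ and elements of order $4$, and your multiplicity-partition observation is a clean way to package (1) and (2); your $n=2$ argument (the two constituents of a reducible restriction are fixed by $X$ and by $\sigma_1$ because these act as $3$-groups on a two-element set) is also exactly the paper's. But there is a genuine gap at $n=4$. Your $s_1=\diag(-1,-1,1,1)$ fails condition (4): multiplying by the central element $z=-I_4$ gives the eigenvalue multiset $\{1,1,-1,-1\}$, so $s_1$ \emph{is} $\wt{G}^\ast$-conjugate to $s_1z$. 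Consequently $\wt{\chi}_{s_1}=\wt{\chi}_{s_1}\otimes\wh{z}$, its restriction to $G=SL_4^\epsilon(q)$ is reducible, and its two constituents are interchanged by a diagonal (i.e.\ $\wt{S}$-) automorphism, so no constituent can be $\aut{S}$-invariant. This breaks precisely the assertion you need for $n\geq 3$, namely that $\chi_1$ may be chosen $\aut{S}$-invariant (at best you can salvage $X$-invariance by the $3$-group orbit argument). The repair is what the paper does: for $n=3,4$ take $s_1$ with eigenvalues $\{\delta,\delta^{-1},1,\ldots\}$ with $\delta$ of order $4$; then a multiplicity count together with $\delta^2=-1\neq\delta^{\pm 1}$ rules out $s_1\sim s_1z$ for $z\neq 1$.

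Two smaller points. First, at $n=6$ your $s_3$ has multiplicity partition $(2,2,2)$, so your stated reason for condition (4) --- that the eigenvalue $1$ has the unique largest multiplicity --- does not apply there; you need the direct check that $\{\omega\zeta,\omega\zeta^{-1},\omega\}=\{\zeta,\zeta^{-1},1\}$ forces $\omega=1$, which does hold precisely because $\zeta$ has order $4$ (with order-$3$ eigenvalues this fails, which is the source of the $n=6$ exception in the paper's $p=2$ argument). Second, your blanket claim that a diagonal matrix whose eigenvalue multiset is closed under inversion has determinant $1$ is false when $-1$ occurs with odd multiplicity (e.g.\ $\diag(-1,1,1)$); it is true for your specific choices, but condition (5) should be checked on those, not deduced from closure under inversion. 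With the $n=3,4$ choice of $s_1$ corrected and these verifications inserted, your argument matches the paper's.
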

\begin{proof}
Throughout, let $\delta\in\mathbb{F}_{q^2}^\times$ have order $4$ and assume $S$ is not isomorphic to one of the groups in Proposition \ref{sporadicalt3}.  Recall that the conjugacy classes of semisimple elements in $\wt{G}^\ast=GL_n^\epsilon(q)$ are determined by their eigenvalues and that $\zent{\wt{G}^\ast}$ is comprised of scalar matrices.

 If $n=2$, then $|\wt{S}/S|=2$ and $\aut{S}/\wt{S}$ is generated by a field automorphism.  The semisimple elements $s_1, s_2,$ and $s_3$ with eigenvalues $\{\delta, \delta^{-1}\}$, $\{\zeta_1, \zeta_1^{-1}\}$, and $\{\xi_1, \xi_1^{-1}\}$ with $\zeta_1\in\mathbb{F}_q^\times$ and $\xi_1\in\mathbb{F}_{q^2}^\times\setminus\mathbb{F}_{q}^\times$ and $|\xi|\neq 4\neq |\zeta|$ satisfy properties (1), (2), and (4).  Now, since $\wt{\chi}_{s_1}$ is fixed by field automorphisms, and hence by $X$, and since $|\wt{S}/S|$ is relatively prime to $3$, we see that the irreducible constituents of the restriction $(\wt{\chi}_{s_1})_{G}$ are still fixed by $X$ and by $\sigma_1$.  If $n=3$ or $4$, then $s_1, s_2, s_3$ satisfy (1)-(4) if chosen to have eigenvalues $\{\delta, \delta^{-1}\}$, $\{-1, -1\}$, and $\{\xi, \xi^{-1}\}$ with remaining eigenvalues $1$, where $|\xi|>2$ divides $q+\eta$ if $4|q-\eta$.  
  
Now suppose that $n\geq 5$.  Consider semisimple elements $s_1, s_2,$ and $s_3$ of $\wt{G}^\ast=GL_n^\epsilon(q)$ with eigenvalues $(\delta, \delta^{-1}, 1, \ldots, 1)$, $(-1, -1, 1, \ldots, 1)$, and $(\delta, \delta^{-1}, \delta, \delta^{-1}, 1, \ldots, 1)$, respectively.  If $n=6$, instead define $s_3$ to have eigenvalues $(-1, -1, -1, -1, 1, 1)$.  Then these satisfy (1)-(4), and in fact properties (2) and (3) are held by all three elements.  Hence the corresponding semisimple characters $\wt{\chi}_{s_i}$ of $\wt{G}$ are invariant under graph-field automorphisms and restrict irreducibly to members of $\irrp 3{B_0(G)}^{\sigma_1}$ that are trivial on $\zent{G}$.  Hence these restrictions are members of $\irrp 3{B_0(S)}^{\sigma_1}$ invariant under $\aut{S}$. 
\end{proof}

\begin{pro}\label{classicaldefining}
Let $q$ be a power of $3$.  Let $S=PSp_{2n}(q), P\Omega_{2n+1}(q)$, or $P\Omega_{2n}^\pm(q)$ be simple with $n\geq 2, 3, 4$ respectively. 
Then there exist nontrivial $\chi_1, \chi_2, \chi_3\in\irrp 3{B_0(S)}^{\sigma_1}$ that are pairwise not $\aut{S}$-conjugate and such that $\chi_1$ is invariant under $\aut{S}$.  
\end{pro}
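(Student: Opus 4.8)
The plan is to work entirely in the dual group and reduce the statement to a combinatorial problem about semisimple conjugacy classes. Since $q$ is a power of $3$, every semisimple character $\wt{\chi}_s$ of $\wt{G}$ has degree $|\wt{G}^\ast:\cent{\wt{G}^\ast}{s}|_{q'}$ coprime to $3$, and because a semisimple $s$ has order prime to $3$, Lemma \ref{SFTlem3.4} shows $\wt{\chi}_s$ is fixed by $\sigma_1$. By the discussion preceding this subsection it therefore suffices to produce semisimple elements $s_1, s_2, s_3\in\wt{G}^\ast$ satisfying conditions (1)--(5). Here $\wt{G}^\ast$ is the relevant (conformal) classical group: of type $B_n$ (odd-dimensional orthogonal) when $S=PSp_{2n}(q)$, of type $C_n$ (symplectic) when $S=P\Omega_{2n+1}(q)$, and of type $D_n$ (even-dimensional orthogonal, of the appropriate $\pm$ type) when $S=P\Omega_{2n}^\pm(q)$. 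In every case the semisimple classes of $\wt{G}^\ast$ are determined by the multiset of eigenvalues of $s$ on the natural module, the nontrivial eigenvalues occurring in pairs $\{\lambda,\lambda^{-1}\}$ together with, in the odd-dimensional case, a forced eigenvalue $1$.

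Following the type $A$ argument of Proposition \ref{typeAdefining}, I would fix $\delta\in\FF_{q^2}^\times$ of order $4$ (available since $q$ is odd) and take $s_1,s_2,s_3$ to be the elements whose nontrivial eigenvalues on the natural module are $\{\delta,\delta^{-1}\}$, $\{-1,-1\}$, and $\{\delta,\delta^{-1},\delta,\delta^{-1}\}$ respectively, all remaining eigenvalues being $1$; in the smallest ranks one may have to replace $s_3$ by $\{\delta,\delta^{-1},-1,-1\}$ to keep the prescribed spectrum inside the natural module and to avoid an accidental conjugacy. Each $s_i$ has eigenvalue product $1$ and all its nontrivial eigenvalues grouped into even-dimensional nondegenerate blocks, so for the symplectic-dual case membership $s_i\in[\wt{G}^\ast,\wt{G}^\ast]=Sp_{2n}(q)$ is automatic, while for the orthogonal-dual cases a determinant-and-spinor-norm computation (arranging the $\delta$- and $-1$-blocks so that the total spinor norm vanishes) places $s_i$ in $[\wt{G}^\ast,\wt{G}^\ast]$. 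This is condition (5), whence each $\wt{\chi}_{s_i}$ is trivial on $\zent{\wt{G}}$ by \cite[Lemma 4.4]{navarrotiep13} and restricts to a nontrivial member of $\irrp 3{B_0(S)}^{\sigma_1}$.

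Next I would verify the remaining conditions from the eigenvalue combinatorics. A field automorphism acts by $\lambda\mapsto\lambda^{3^i}$; as $\delta$ has order $4$ this fixes the pair $\{\delta,\delta^{-1}\}$ and it fixes $1$ and $-1$, so all three classes are field-stable. For types $B_n$ and $C_n$ there is no graph automorphism in odd characteristic, and for $D_n^\pm$ with $n\neq 4$ the order-$2$ graph automorphism acts by an orthogonal transformation and hence preserves eigenvalue multisets; this gives that $s_1,s_2,s_3$ remain pairwise non-conjugate under graph-field automorphisms, hence condition (1), and that the class of $s_1$ is graph-field stable, hence condition (3). For conditions (2) and (4) one notes that the relevant $z\in\zent{\wt{G}^\ast}$ act on the eigenvalue-and-type data in a controlled, type-dependent way (negation by the scalar $-I$, together with a spinor-type twist in the $D_n$ cases); because $-\delta^{\pm1}=\delta^{\mp1}$ while $\pm1\mapsto\mp1$, multiplication by any such $z$ alters the multiplicity pattern of the eigenvalues $\pm1$, and a direct count of eigenvalue multiplicities shows that $s_i$ is never $\wt{G}^\ast$-conjugate to $s_jz$ in the required ranges of $n$.

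The genuinely delicate point, and the one I expect to be the main obstacle, is $S=P\Omega_8^+(q)=D_4(q)$: its automorphism group contains the order-$3$ triality, which does not act on the eigenvalue multisets of the natural $8$-dimensional module and can therefore move the class of the eigenvalue-defined $s_1$. To obtain an $\aut{S}$-invariant $\chi_1$ in this case I would not use an eigenvalue-defined $s_1$, but instead choose $s_1$ in a triality-stable maximal torus (equivalently, a semisimple element whose characteristic polynomials on the three $8$-dimensional representations of $\mathrm{Spin}_8$ coincide), so that its class is stable under the full graph-field group and $\wt{\chi}_{s_1}$ is graph-field invariant by \cite[Corollary 2.4]{NTTcyclo}; alternatively one can single out a $3'$-degree occurring with multiplicity one among $\irr{\wt{G}}$, which forces the corresponding character to be $\aut{S}$-invariant. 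The elements $s_2,s_3$ need only be non-conjugate to $s_1$ and to each other, so the eigenvalue construction still supplies them. This triality bookkeeping, together with the low-rank sanity checks at $n=2,3,4$ (where the natural module is small and $s_3$ may need adjustment) and the groups already dispatched by GAP in Proposition \ref{sporadicalt3}, is where essentially all the work lies; for the remaining types the conclusion is immediate from the eigenvalue combinatorics.
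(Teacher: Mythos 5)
Your reduction rests on the claim that ``in every case the semisimple classes of $\wt{G}^\ast$ are determined by the multiset of eigenvalues of $s$ on the natural module,'' and that multiplication by a central element $z$ visibly ``alters the multiplicity pattern of the eigenvalues $\pm1$.'' This is where the argument breaks. You have misidentified the isogeny type of the dual group: since $\wt{G}$ has connected center and simply connected derived subgroup $G$, the derived subgroup $\bK=[\wt{\bG}^\ast,\wt{\bG}^\ast]$ is \emph{simply connected} (this is exactly what the paper invokes from \cite[Theorem 1.12.4]{GLS3} and \cite[15.1]{CE04}). So for $S=PSp_{2n}(q)$ the relevant group is $\mathrm{Spin}_{2n+1}$, not $SO_{2n+1}$, and for $S=P\Omega^\pm_{2n}(q)$ it is $\mathrm{Spin}_{2n}$, not $SO^\pm_{2n}$; only the case $S=P\Omega_{2n+1}(q)$, with dual derived subgroup $Sp_{2n}(q)$, behaves as you describe. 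In the spin cases the kernel of $\mathrm{Spin}\to SO$ is a nontrivial central subgroup acting \emph{trivially} on the natural module: $s$ and $sz$ have identical eigenvalues for such $z$, so no eigenvalue count can ever verify conditions (2) and (4) (non-conjugacy of $s_i$ to $s_jz$), and indeed eigenvalue data does not even pin down the element $s_i$ (hence the character $\wt{\chi}_{s_i}$, since $\wt{\chi}_s$ and $\wt{\chi}_{sz}$ differ by the linear character $\wh{z}$). The same problem undermines your condition (5): one cannot ``place'' a natural-module matrix in $[\wt{G}^\ast,\wt{G}^\ast]$ by a determinant-and-spinor-norm computation, because one must choose one of two spin preimages and the two choices give different characters. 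This gap covers two of the three families in the statement, including the flagship case $S=PSp_{2n}(q)$.

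This difficulty is precisely why the paper's proof abandons the matrix picture and works inside $\bK\cong(\wt{\bG}^\ast)_{sc}$ via Chevalley generators: it takes $s_1'=h_{\alpha_1}(\delta)$ (or $h_{\alpha_2}(\delta)$ at the branch node of $D_4$, which neatly handles the triality issue you flag), uses the Chevalley relations together with the explicit description of $\zent{\bK}$ in \cite[Table 1.12.6]{GLS3} to check non-conjugacy to central twists, uses Lang--Steinberg to produce an $F^\ast$-fixed representative when $\delta\notin\FF_q^\times$, and transfers geometric class-stability under field automorphisms to stability of the $\wt{G}^\ast$-class via connectedness of $\cent{\bG^\ast}{\iota^\ast(s_1)}$ (\cite[Corollary 2.8(a)]{bonnafe05}, \cite[(3.25)]{dignemichel}) --- a rationality step your proposal also glosses over. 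To repair your approach you would have to supplement the eigenvalue data with a criterion deciding when an element of $\mathrm{Spin}$ is conjugate to its product with the spin-center element; that is genuinely extra work, and in substance amounts to redoing the paper's Chevalley-relation computation in different notation.
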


\begin{proof}
We may again assume $S$ is not one of the groups in Proposition \ref{sporadicalt3}. Let $\delta\in\mathbb{F}_{q^2}^\times$ with $|\delta|=4$, and we keep the notation from Section \ref{generalities}.  
Let $\Phi$ and $\Delta:=\{\alpha_1, \alpha_2, \ldots, \alpha_n\}$ be a system of roots and simple roots, respectively, for $\wt{\bG}^\ast$ with respect to a maximal torus $\wt{\textbf{T}}^\ast$, following the standard model described in \cite[Remark 1.8.8]{GLS3}. Then $\Phi$ is type $B_n$, $C_n$, or $D_n$ in the case $S=PSp_{2n}(q), P\Omega_{2n+1}(q)$, or $P\Omega_{2n}^\pm(q)$, respectively.  Further, $\Phi$ has no nontrivial graph automorphism unless we are in the case of $D_n$, in which case all members of $\Delta$ have the same length and that automorphism has order $2$ unless $n=4$.  

We use the notation as in \cite{GLS3} for the Chevalley generators.   In particular, given $\alpha\in \Phi$, let $h_\alpha$ denote the corresponding coroot.  Let $\bK:=[\wt{\bG}^\ast, \wt{\bG}^\ast]$, so we have $h_\alpha(t)\in \bK$ for $t\in \overline{\mathbb{F}}_q^\times$ by \cite[Theorem 1.10.1(a)]{GLS3} and $\wt{\bG}^\ast=\bK.\zent{\wt{\bG}^\ast}$.  Notice that for $s, s'\in \bK$ (not necessarily distinct), we have $s$ is $\wt{\bG}^\ast$-conjugate to $s'z$ for $z\in \zent{\wt{\bG}^\ast}$ if and only if $z\in \zent{\bK}$ and the conjugating element can be chosen in $\bK$.

 By \cite[Theorem 1.12.4]{GLS3} and \cite[15.1]{CE04}, $\bK$ is isomorphic as an abstract group to the simply connected simple algebraic group $(\wt{\bG}^\ast)_{sc}$ associated to $\wt{\bG}^\ast$, and the Chevalley relations and generators of $(\wt{\bG}^\ast)_{sc}$  and $\bK$ may be identified.  We will make this identification.  In particular, choosing $s_1, s_2$, and $s_3$ in $\bK$, the properties (1)-(3) may be verified by computation in $\bK$ rather than $\wt{\bG}^\ast$.

Let $\textbf{T}$ denote a maximal torus of $\bK$ under this identification, and note that $\textbf{T}=\langle h_{\alpha}(t)\mid t\in\overline{\mathbb{F}}_q^\times, \alpha\in\Phi\rangle$, and $\norm{\bK}{\textbf{T}}=\langle \textbf{T}, n_\alpha(1)\mid \alpha\in\Phi\rangle$. Further, note that $\textbf{W}:= \norm{\wt{\bG}^\ast}{\wt{\textbf{T}}^\ast}/\wt{\textbf{T}}^\ast\cong \norm{\bK}{\textbf{T}}/\textbf{T} $. By \cite[Cor. 0.12]{dignemichel}, we know that $\norm \bK{\textbf{T}}$ controls fusion in $\textbf{T}$, so two elements of $\textbf{T}$ are conjugate if and only if there is a conjugating element in $\textbf{W}$.  Further, we have an isomorphism $(\overline{\mathbb{F}}_q^\times)^n\rightarrow \textbf{T}$ given by $(t_1,\ldots, t_n)\mapsto \prod_{i=1}^n h_{\alpha_i}(t_i)$. 

Now using the standard model for $\Phi$ and $\Delta$ as in \cite{GLS3}, since $\Phi$ is type $B_n, C_n,$ or $D_n$, we have $\alpha_i:=e_i-e_{i+1}$ for $1\leq i\leq n-1$, where $\{e_1,\ldots, e_n\}$ is an orthonormal basis for the $n$-dimensional Euclidean space.  Here $\textbf{W}\leq C_2\wr \mathfrak{S}_n$ where the generators of the base subgroup $C_2^n$ act via negation on the $e_i$'s and the copy of $\mathfrak{S}_n$ permutes the $e_i$'s.  

Using this information and the description of $\zent{\bK}$ in \cite[Table 1.12.6]{GLS3}, computation with the Chevalley relations yields that the element $s_1':=h_{\alpha_1}(\delta)$ is not $\wt{\bG}^\ast$-conjugate to $s_1'z$ for any $1\neq z\in \zent{\wt{\bG}^\ast}$.  If $\delta\in\mathbb{F}_q^\times$, we see that $s_1'$ is $F^\ast$-fixed, and we write $s_1:=s_1'$.  Otherwise, let $\dot{s}_{\alpha_1}\in \textbf{W}$ induce the reflection corresponding to $\alpha_1$.  Then $s_1:=s_1'^g$ is $F^\ast$-fixed, where $g\in \wt{\bG}^\ast$ satisfies $g^{-1}F^\ast(g)=\dot{s}_\alpha$.  (Note that such a $g$ exists by the Lang-Steinberg theorem.)  

Let $F_3$ denote a generating field automorphism such that $F_3(h_\alpha(t))=h_\alpha(t^3)$ for $\alpha\in\Phi$ and $t\in\overline{\mathbb{F}}_q^\times$. Then $s_1'$ is $\wt{\bG}^\ast$-conjugate to $F_3(s_1')$, taking for example $\dot{s}_{\alpha_1}$ as the conjugating element.  Hence $s_1$ is also $\wt{\bG}^\ast$-conjugate to $F_3(s_1)$. Since the $\cent{\bG^\ast}{\iota^\ast(s_1)}$ is connected, using \cite[Corollary 2.8(a)]{bonnafe05}, this yields that the $\wt{G}^\ast$-conjugacy class of $s_1$ is fixed by field automorphisms, using \cite[(3.25)]{dignemichel}. Further, by construction, the $\wt{G}^\ast$-conjugacy class of $s_1$ is fixed by graph automorphisms unless $\Phi$ is type $D_4$.  In the latter case, we may make similar considerations using $s_1':=h_{\alpha_2}(\delta)$.

Now, further taking $s_2:=h_{\alpha_1}(-1)$ and $s_3\in \bK^{F^\ast}$ an element of order larger than $4$, we obtain properties (1)-(4).  
\end{proof}

Theorem \ref{simples3} now follows from Propositions \ref{sporadicalt3} -- \ref{classicaldefining}, completing the proof of Theorem A.

\end{document}